\documentclass[11pt]{amsart}
\usepackage[margin=1in]{geometry}
\usepackage{amsmath,amssymb,amsthm,mathtools,mathrsfs,booktabs,longtable,array}
\usepackage{graphicx,colortbl}
\newcolumntype{L}[1]{>{\raggedright\arraybackslash}p{#1}}
\usepackage[T1]{fontenc}
\usepackage{lmodern,microtype,xcolor}
\definecolor{darkblue}{rgb}{0,0,0.7}
\usepackage[colorlinks=true,linkcolor=darkblue,citecolor=darkblue,urlcolor=darkblue]{hyperref}
\usepackage{thm-restate}
\RemoveFromHook{package/cleveref/after}[firstaid/aliascounter]
\usepackage[capitalize,nameinlink]{cleveref}
\newcommand{\defeq}{\coloneqq}
\newcommand{\operatorstyle}{\textstyle}
\DeclareMathOperator{\Cat}{Cat}
\DeclareMathOperator{\rdet}{rdet}
\newcommand{\defn}[1]{\emph{\color{darkblue}#1}}
\newtheorem{theorem}{Theorem}[section]
\newtheorem{proposition}[theorem]{Proposition}
\newtheorem{lemma}[theorem]{Lemma}
\newtheorem{corollary}[theorem]{Corollary}
\theoremstyle{definition}
\newtheorem{definition}[theorem]{Definition}
\newtheorem{example}[theorem]{Example}
\newtheorem{remark}[theorem]{Remark}
\newtheorem{problem}[theorem]{Problem}
\crefname{appendix}{Appendix}{Appendices}
\Crefname{appendix}{Appendix}{Appendices}

\AddToHook{cmd/appendix/before}{%
  \crefalias{section}{appendix}%
}
\usepackage[cal=boondoxo]{mathalfa}
\usepackage[colorinlistoftodos]{todonotes}
\newcommand{\C}{\mathbb C}\newcommand{\Q}{\mathbb Q}
\newcommand{\Z}{\mathbb Z}\newcommand{\N}{\mathbb Z_{\ge0}}

\DeclareMathOperator{\Sym}{Sym}\DeclareMathOperator{\Hom}{Hom}
\DeclareMathOperator{\Hilb}{Hilb}
\DeclareMathOperator{\Fix}{Fix}

\title{Posh parking spaces}
\author{Jason Stack}
\author{Nathan Williams}
\hypersetup{pdftitle={Posh parking spaces},
  pdfauthor={Jason Stack and Nathan Williams}}
\date{September 28, 2026}
\begin{document}
\begin{abstract}
Let $W$ be an
irreducible complex reflection group with reflection representation $V$.  A $W$-stable, faithful homogeneous system of
parameters $\Theta \subseteq \mathrm{Sym}(V^*)$ of common positive degree $p$ is called a \emph{posh hsop}; $\Theta$ \emph{carries} a $W$-representation $U$ if $\Theta \simeq U$ as ungraded $W$-modules. We classify the \emph{posh} pairs $(p,U)$ for which a posh hsop of degree $p$ carrying $U$ exists. Every such $U$ is a Galois twist of $V^*$. Extending work of Ito and Okada, we deduce that the quotient $S/(\Theta)$ is a permutation module for $W$ if and only if $U \simeq V^*$.
\end{abstract}
\maketitle
\section{Introduction}

Let $W$ be a finite complex reflection group acting on a vector space
$V$ of rank $r$ (the \defn{reflection representation}), and put $S\defeq \Sym(V^*)$ with graded homogeneous components $S=\bigoplus_{p=0}^\infty S_p$.  The \defn{field of definition} of $W$ is
\[
 K_W\defeq \mathbb{Q}\bigl(\chi_V(w):w\in W\bigr).
\]
By a theorem of M.~Benard and D.~Bessis~\cite{Benard,BessisField}, every irreducible complex
$W$-representation can be realized over $K_W$; see I.~Marin and J.~Michel~\cite[Introduction]{MarinMichel}.
Write $\mathfrak f_W$ for the \defn{conductor} of this field, the least
positive integer such that $K_W\subseteq\mathbb{Q}(\zeta_{\mathfrak f_W})$,
where $\zeta_{\mathfrak f_W}\defeq\exp(2\pi i/\mathfrak f_W)$.  (For irreducible well-generated $W$, we have $\mathfrak f_W$ divides the \defn{Coxeter number}\footnote{For an irreducible well-generated group, the Coxeter number is $h=(N+N^*)/r$, where $N$ is the number of reflections and $N^*$ is the number of reflecting hyperplanes. Using the same formula for $h$ when $W$ is not well-generated, $\mathfrak f_W\mid h$ exactly when $W=G(2s,2,r)$ with $s>1$ odd and $r\geq2$.} $h$.)  For $\gcd(k,\mathfrak f_W)=1$, let $\sigma_k$ be the automorphism
$\zeta_{\mathfrak f_W}\mapsto\zeta_{\mathfrak f_W}^k$.
The \defn{Galois twist} $(V^*)^{\sigma_k}$ is obtained by applying
$\sigma_k$ entrywise to a matrix realization of $V^*$ over $\mathbb{Q}(\zeta_{\mathfrak f_W})$ (note, however, that $\sigma_k$ can become trivial when restricted to $K_W$---for example, for real groups, $V\simeq V^*$ so that complex conjugation is a trivial Galois twist).

\subsection{Posh hsops and pairs}

Following D.~Armstrong, V.~Reiner, and B.~Rhoades~\cite{ARR}, and
Rhoades~\cite{RhoadesFuss}, we consider the following parameter spaces.
\begin{definition}
For a positive integer $p$, a $W$-stable \defn{homogeneous system of
parameters (hsop)} of degree $p$ is an $r$-dimensional $W$-stable
subspace $\Theta\subseteq S_p$ whose common zero is the origin.\footnote{Equivalently, $S/(\Theta)$ is finite-dimensional.}
It \defn{carries} $U$ if $\Theta\cong U$ as an ungraded $W$-module.
If the action of $W$ on $\Theta$ is faithful, we call $\Theta$
\defn{posh}. We call a pair $(p,U)$ \defn{posh} if a
posh hsop of degree $p$ carrying $U$ exists.
\label{def:parking_hsop}
\label{def:posh}
\end{definition}

\begin{example}\label{ex:intro-cyclic}
For $m>1$, let the cyclic group $W=C_m$ act on $V=\C$ by multiplication by
$\zeta_m$, so that it acts on $x\in V^*$ by $\zeta_m^{-1}$.
For every $p>0$, $\Theta=\C x^p$ is a $W$-stable hsop.
It is posh exactly when $\gcd(p,m)=1$, so that $(p,(V^*)^{\sigma_p})$ is posh.
\end{example}

In fact, posh hsops always carry some Galois twist of the reflection representation.
\begin{restatable}{lemma}{faithfulhsoplemma}\label{a:cor:faithful-hsop}
Let $W\subseteq\operatorname{GL}(V)$ be a nontrivial irreducible complex reflection
group of rank $r$, and let $\Theta\subseteq S_p$ be a $W$-stable hsop
carrying $U$. Then
\[
 \Theta\text{ is posh}
 \quad\Longleftrightarrow\quad
 U\cong(V^*)^\sigma\text{ for some }\sigma.
\]
\end{restatable}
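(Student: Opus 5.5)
The direction ($\Leftarrow$) is formal. If $\rho\colon W\to \operatorname{GL}_r(\mathbb Q(\zeta_{\mathfrak f_W}))$ realizes $V^*$, then $\rho^\sigma(w)=I$ if and only if $\rho(w)=I$, because $\sigma$ is a field automorphism applied entrywise. So $(V^*)^\sigma$ is faithful exactly when $V^*$ is, and $V^*$ is faithful since $W\subseteq\operatorname{GL}(V)$. Hence any $\Theta\cong(V^*)^\sigma$ is posh. All the work is in ($\Rightarrow$), and the plan is to show first that $W$ acts on $U$ as a reflection group, and then to invoke uniqueness of reflection representations up to Galois twist.

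\emph{Step 1: a character identity.} Since $\Theta$ is an hsop, a basis of $\Theta$ is a regular sequence in $S$. So the Koszul complex $S\otimes\wedge^\bullet\Theta$ (with $\Theta$ placed in degree $p$) is a $W$-equivariant graded free resolution of $S/(\Theta)$. Taking graded traces of $w\in W$ gives
\[
 \sum_{d\ge0}\operatorname{tr}\bigl(w\mid (S/(\Theta))_d\bigr)q^d=\frac{\det(1-q^p w\mid U)}{\det(1-q w\mid V^*)}.
\]
The left side is a polynomial in $q$ because $S/(\Theta)$ is finite-dimensional. Write the eigenvalues of $w$ on $V^*$ as $\lambda_1,\dots,\lambda_r$ and on $U$ as $\mu_1,\dots,\mu_r$. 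Then every root $q=\lambda_i^{-1}$ of the denominator must be a root of the numerator. This forces $\lambda_i^{p}=\mu_j$ for some $j$.

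\emph{Step 2: reflections act as reflections on $U$.} Apply Step 1 to the inverse of a reflection $s$, where I take the eigenvalues of $s$ on $V^*$ to be $1$ (with multiplicity $r-1$) and $\zeta\neq1$. The roots of the denominator are $1$, with multiplicity $r-1$, and $\zeta^{-1}$. The numerator is $\prod_j(1-q^p\mu_j)$, and each factor has simple roots, so $q=1$ having multiplicity $r-1$ requires at least $r-1$ of the $\mu_j$ to equal $1$. Faithfulness gives $s|_U\neq1$, so the remaining eigenvalue is not $1$. Step 1 says it must lie in $\{1,\zeta^p\}$, so it equals $\zeta^p\ne1$. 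Thus every reflection of $W$ acts on $U$ as a reflection, and since $W$ is generated by reflections, $W\to\operatorname{GL}(U)$ is a faithful representation generated by reflections. In particular $\zeta^p\neq1$ for all reflection eigenvalues $\zeta$, i.e.\ $\gcd(p,\text{orders of reflections})=1$.

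\emph{Step 3: identify $U$.} I expect this step to be the main obstacle. We must show that a faithful $r$-dimensional representation of the abstract group $W$ in which reflections act as reflections is a Galois twist of $V^*$. First I would show $U$ is irreducible, or at least has no trivial summand. If $U=U_1\oplus U_2$ with both summands nonzero, each reflection acts nontrivially on exactly one summand. Commuting and non-commuting reflections then partition the reflections into two classes that commute with each other; irreducibility of $W$ means its reflection graph is connected, which should rule this out. One must handle the subcase where all reflections act on $U_1$ alone; then $U_2$ is a trivial summand, and I would exclude it by comparing ranks. The group $W$ acts faithfully on $U_1$ as a reflection group of rank less than $r$, which contradicts the fact that an irreducible reflection group's rank is determined by its degrees, or equivalently by the count $N+N^*$ of reflections and hyperplanes. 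Once $U$ is known to be an irreducible faithful reflection representation of dimension $r$, I would invoke the known fact, from the Shephard--Todd classification as recorded by Marin--Michel and the field-of-definition results of Benard and Bessis, that all such representations of $W$ are Galois conjugate to $V$. Since $V$ and $V^*$ are related by complex conjugation, which is itself a Galois twist, this gives $U\cong(V^*)^\sigma$. If that citation is unavailable in the needed form, I would fall back on checking the infinite family $G(m,d,r)$ directly and the exceptional groups by character tables.
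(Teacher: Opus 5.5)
Your proposal has the same three-step skeleton as the paper: every reflection acts on $U$ as a reflection, $U$ cannot split, then Marin--Michel. The easy direction is handled exactly as in the paper. The sub-arguments differ.

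For the reflection property, the paper's proof cites \cref{a:thm:automatic}, whose argument is geometric. The non-fixed $s$-eigenvectors in $\Theta$ vanish on $H$, so the fixed ones cut out the origin in $H$, and Krull's height theorem gives $\dim\Theta^s\ge r-1$. Your vanishing-order count at $q=1$ in the Koszul character is the argument the paper uses for $(2)\Rightarrow(3)$ in \cref{c:prop:reduction}, and it is valid here. The height argument's advantage is that it needs no finite-dimensional quotient; this is what lets the paper treat faithful regular sequences in \cref{a:cor:faithful-partial-sequence}.

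For the splitting, the paper shows $W=W_1\times\cdots\times W_k$ and does a rank count on $V=V_1\boxtimes\cdots\boxtimes V_k$ (\cref{a:prop:faithful-reflection-module}). Your commuting-classes argument also works for $r\ge2$, once you supply two standard facts:
\begin{enumerate}
\item a reflection commutes with $s$ iff it commutes with every nontrivial element of $W_H$, so reflections sharing a hyperplane fall in the same class;
\item commuting unitary reflections with distinct hyperplanes have orthogonal root lines.
\end{enumerate}
In rank one the non-commuting graph is disconnected when $e_H\ge3$, so your connectivity claim fails there, but then $\dim U=1$ and there is nothing to split.

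Two points need correcting.

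\emph{The remark $\zeta^p\neq1$ in Step~2 is false.} The inference ``it must lie in $\{1,\zeta^p\}$, so it equals $\zeta^p\neq1$'' reads Step~1 backwards. Step~1 only gives $\zeta^p\in\{\mu_1,\dots,\mu_r\}$. When $\zeta^p=1$ this is already witnessed by the $r-1$ fixed eigenvalues and says nothing about the last one. The resulting claim $\gcd(p,\text{orders of reflections})=1$ is false. For $S_3$ and $p=2$, Haiman's map $x_i\mapsto x_i^2-\frac13\sum_jx_j^2$ from \cref{c:ex:haiman} gives a posh hsop carrying $V^*$, yet a transposition acts on $U$ with eigenvalue $-1\neq\zeta^p=1$. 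You never use the remark, so delete it: $\dim U^s\ge r-1$ plus faithfulness already makes $s|_U$ a reflection.

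\emph{The rank comparison does not exclude a trivial summand.} Degrees and the counts $N,N^*$ are invariants of a reflection representation, not of the abstract group. A priori the image of $W$ in $\operatorname{GL}(U_1)$ could contain more reflections than $W$ has on $V$, so no contradiction follows. The step is unnecessary, however, because Marin--Michel's theorem (\cref{b:thm:MarinMichel}) has no dimension hypothesis. It says that any faithful irreducible module on which every reflection of $W$ acts as a reflection is some $V^\sigma$. Applying it to $U_1$ (the paper applies it to the nontrivial part of $U^*$) gives $\dim U_1=r=\dim U$. The fixed summand therefore vanishes, which is exactly how the paper finishes.
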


We caution that non-isomorphic twists can form posh pairs with the same degree.

\begin{example}\label{ex:intro-h3}
The Coxeter group $H_3$ is defined over $K_{H_3}=\mathbb{Q}(\sqrt{5})\subseteq\mathbb{Q}(\zeta_5)$, and
$\mathfrak f_{H_3}=5$.  As a real group, complex conjugation ($\sigma_4$) acts trivially to preserve the reflection representation, whereas the nontrivial Galois twist $\sigma: \sqrt{5}\mapsto-\sqrt{5}$ is realized by both $\sigma_2$ and $\sigma_3$. Both $(5,V^*)$ and $(5,(V^*)^\sigma)$ are posh. (For more information, see~\cite{BalagovicPuranik} and~\Cref{c:sec:seeds}.)
\end{example}

Several families of posh pairs were previously known:
\begin{itemize}
\item For $S_n$, M.~Haiman proved that $(p,V^*)$ is posh exactly when
$\gcd(p,n)=1$, by composing equivariant polynomial maps~\cite{Haiman}.
We review his construction in~\cref{c:sec:cherednik}.
\item For Weyl groups and $\gcd(p,h)=1$, A.~Oblomkov and Z.~Yun give
a geometric realization of the posh pairs $(p,V^*)$ using homogeneous affine Springer fibers~\cite{OblomkovYun}.
\item The rational Cherednik approach builds on I.~Gordon's work on
diagonal-coinvariant quotients~\cite{GordonDiagonal} and on
Y.~Berest--P.~Etingof--V.~Ginzburg's finite-dimensional polynomial
representations and character formulas~\cite{BEGFinite}.
For irreducible well-generated complex reflection groups and $\gcd(p,h)=1$,
I.~Gordon and S.~Griffeth use \emph{rational Cherednik algebras} to prove
that $(p,(V^*)^{\sigma_p})$ is posh~\cite[Theorem 2.11 and Section 2.18]{GG}.
We review their construction in~\cref{c:sec:cherednik}; for P.~Etingof's
reducedness argument in the real case, see~\cite[Section~12]{ARR}. 
\end{itemize}

Our first result classifies all posh pairs.  In light of the constructions above, we first give the easily-stated (and expected) specialization of our classification to well-generated complex reflection groups.

\begin{restatable}{proposition}{coprimeproposition}\label{prop:coprime-hsop}
Let $W$ be a well-generated finite irreducible complex reflection group,
let $p>0$ be coprime to its Coxeter number $h$, and let $U$ be a
$W$-module. Then $(p,U)$ is posh iff $U\simeq(V^*)^{\sigma_p}$.
\end{restatable}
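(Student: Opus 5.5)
Existence is already in the literature: for well-generated irreducible $W$ and $\gcd(p,h)=1$, Gordon and Griffeth's rational Cherednik construction \cite[Theorem 2.11 and Section 2.18]{GG} gives a posh hsop of degree $p$ carrying $(V^*)^{\sigma_p}$. Here $\sigma_p$ makes sense because $\mathfrak f_W\mid h$ forces $\gcd(p,\mathfrak f_W)=1$. So $(p,(V^*)^{\sigma_p})$ is posh, and the direction ``$\Leftarrow$'' follows, since being posh depends only on the isomorphism class of $U$.

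For ``$\Rightarrow$'', let $\Theta\subseteq S_p$ be a posh hsop carrying $U$. By \Cref{a:cor:faithful-hsop}, $U\cong(V^*)^{\sigma}$ for some Galois automorphism $\sigma=\sigma_k$ of $K_W$. It remains to show that $\sigma_k$ and $\sigma_p$ agree on $K_W$, or at least that the two twists are isomorphic. I would detect the twist through a regular element of order $h$. In a well-generated group, a Coxeter element $c$ acts on $V$ with eigenvalues $\zeta_h^{1-d_i}$, and on $V^*$ with eigenvalues $\zeta_h^{d_i-1}$; in particular $\zeta_h$ occurs, for the degree $d_i=2$ when $W$ is real and in general via $d_r=h$. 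So $V^*$ has an eigenvalue $\zeta_h^{-1}$, and $(V^*)^{\sigma_k}$ has $c$-eigenvalues obtained by raising those of $V^*$ to the $k$-th power.

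Now compare with the hsop. Since $\Theta$ is an hsop of degree $p$, the graded character of $S/(\Theta)$ is given by the Koszul formula
\[
\operatorname{tr}(c,S/(\Theta))(q)=\frac{\det_{\Theta}(1-q^p c)}{\det_{V^*}(1-qc)},
\]
and the left side must be a polynomial in $q$. Let $\lambda$ be a primitive $h$-th root of unity that is a $c$-eigenvalue on $V^*$ (one always exists, namely $\zeta_h^{d_r-1}=\zeta_h^{-1}$). Then $q=\lambda^{-1}$ is a pole of the denominator, so it must also be a zero of the numerator. That means some $c$-eigenvalue $\mu$ on $\Theta$ satisfies $\mu\lambda^{-p}=1$, i.e.\ $\mu=\lambda^{p}$. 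Using the full multiset of eigenvalues $\{\zeta_h^{d_i-1}\}$, together with multiplicities and polynomiality at every primitive pole, I expect this to force the $c$-eigenvalue multiset on $\Theta$ to be the $p$-th power map applied to that on $V^*$, which is exactly the eigenvalue multiset of $(V^*)^{\sigma_p}$. Combined with $\Theta\cong(V^*)^{\sigma_k}$, this gives $\zeta^{k}=\zeta^{p}$ on those eigenvalues.

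The main obstacle is passing from agreement of eigenvalues at a single element $c$ to $\sigma_k|_{K_W}=\sigma_p|_{K_W}$, and hence to $(V^*)^{\sigma_k}\cong(V^*)^{\sigma_p}$. My plan is to use that $\chi_V(c)$, or more robustly the characters of regular elements $c^j$, generate $K_W$ for well-generated $W$; since $K_W\subseteq\Q(\zeta_h)$, it is enough that $\sigma_k$ and $\sigma_p$ agree on $\zeta_h$ up to the stabilizer of $K_W$. Agreement at $c$ gives $\sigma_k(\chi_V(c))=\sigma_p(\chi_V(c))$, and I would need $\Q(\chi_V(c))=K_W$. If that fails in some exceptional case, I would instead run the pole-counting argument for every regular element $c^j$, or fall back on the classification and check the finitely many exceptional groups directly.
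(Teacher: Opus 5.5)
Your proposal is correct, and it takes a genuinely different route from the paper.

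\emph{The paper's route.} It proves ``$\Rightarrow$'' in one line by reading off the well-generated rows of the classification in \cref{c:thm:main}. For the exceptional groups those rows rest on the computer-assisted cyclotomic test of \cref{c:prop:exceptional-polynomiality}; for the infinite families they rest on \cref{c:prop:infinite-classification}.

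\emph{Your route.} You evaluate the Koszul character of \cref{prop:koszul} at a single Coxeter element. The step you only ``expect'' does hold, but run it over every $\lambda\in\mu_h$, not only primitive ones, since the $\zeta_h^{e_i}$ need not be primitive (for $H_3$ one is $-1$). Write $\mu_i$ for the $c$-eigenvalues on $\Theta$. At $q=\lambda^{-1}$ the denominator vanishes to order $\#\{i:\zeta_h^{e_i}=\lambda\}$. The numerator vanishes to order $\#\{i:\mu_i=\lambda^p\}$, because each factor $1-q^p\mu_i$ has simple roots. When $\gcd(p,h)=1$, the map $\lambda\mapsto\lambda^p$ permutes $\mu_h$, so summing these inequalities over $\lambda\in\mu_h$ forces equality throughout. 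Hence the multiset $\{\mu_i\}$ is exactly $\{\zeta_h^{pe_i}\}$.

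Your ``main obstacle'' is settled by the same input the paper uses for \cref{cor:coxeter-permutation}. The multiset equality gives $\sigma_k(\chi_{V^*}(c^j))=\sigma_p(\chi_{V^*}(c^j))$ for all $j$. So $\sigma_k$ and $\sigma_p$ agree on the characteristic-polynomial coefficients of $c$, and these generate $K_W$ by Reiner--Ripoll--Stump, Theorem~1.8(iv). Passing between $V$ and $V^*$ is harmless because $K_W$ is stable under complex conjugation. You therefore need neither $\Q(\chi_V(c))=K_W$ nor a recount at other powers $c^j$.

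\emph{What each buys.} The paper's route costs nothing once the full classification is in place, but it inherits that classification's dependence on computation. Yours is uniform and computation-free, and it uses only polynomiality of the determinant quotient at one element. It also shows exactly where coprimality enters: injectivity of the $p$th power map on $\mu_h$. This fails for $H_3$ at $p=5$, where both twists occur. Like the paper, you still rely on \cref{a:cor:faithful-hsop}, and hence on Marin--Michel, to know that $U$ is a twist at all.
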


Note that~\Cref{prop:coprime-hsop} does not apply to the two non-isomorphic posh hsops of degree $5$ for $H_3$ from~\Cref{ex:intro-h3}, since $5$ is not coprime to the Coxeter number $h=10$.  We now state a complete classification of the degrees $p$ and representations $U$ for which there exists a posh hsop of degree $p$ carrying $U$ (necessarily a Galois twist $(V^*)^\sigma$ by~\Cref{a:cor:faithful-hsop}).  In contrast to~\Cref{prop:coprime-hsop}, the particular twist $\sigma$ is not necessarily specified by the degree $p$.

\begin{restatable}{theorem}{classificationtheorem}\label{c:thm:main}
Let $W$ be a nontrivial irreducible finite complex reflection group of rank $r$,
fix a positive integer $p$, and fix $U$ a faithful $r$-dimensional
representation of $W$. Write $a_i\defeq e_i(U)$ and
$b_i\defeq e_i(U^*)$ for the fake-degree exponents, and let
$d_1,\ldots,d_r$ be the invariant degrees. 

The following are
equivalent:
\begin{enumerate}
\item The pair $(p,U)$ is posh.
\item The class function
\[
 w\longmapsto\frac{\det(1-q^p w\mid U)}{\det(1-qw\mid V^*)}
\]
is the character of a finite-dimensional graded $W$-module.
\item Every reflection of $W$ acts on $U$ as a reflection, and both
\begin{equation*}
 \operatorstyle
 \Cat^+_{p,U}(q)\defeq\prod\limits_{i=1}^r\frac{1-q^{p+b_i}}{1-q^{d_i}}
 \qquad\text{and}\qquad
 \Cat^-_{p,U}(q)\defeq\prod\limits_{i=1}^r
                              \frac{q^{a_i}-q^p}{1-q^{d_i}}
\end{equation*}
are polynomials.
\item The pair $(p,U)$ appears in~\cref{tab:infinite-algebraic} (infinite families) or~\Cref{c:tab:exceptional} (exceptional types).
\end{enumerate}
\end{restatable}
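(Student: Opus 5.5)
I will prove the cycle (1)$\Rightarrow$(2)$\Rightarrow$(3)$\Rightarrow$(4)$\Rightarrow$(1).

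\textbf{(1)$\Rightarrow$(2).} Suppose $\Theta\subseteq S_p$ is a posh hsop carrying $U$. Then $\theta_1,\dots,\theta_r$ is a regular sequence in the Cohen--Macaulay ring $S$, so the Koszul complex $S\otimes\wedge^\bullet\Theta$ (with $\Theta$ placed in degree $p$) is a $W$-equivariant graded free resolution of $S/(\Theta)$. Taking the equivariant graded character at $w$ gives
\[
\operatorname{Hilb}(S/(\Theta);w,q)=\frac{\det(1-q^pw\mid U)}{\det(1-qw\mid V^*)},
\]
using Molien's formula $\sum_k\chi_{S_k}(w)q^k=1/\det(1-qw\mid V^*)$ together with $\sum_k(-1)^k\chi_{\wedge^k U}(w)q^{pk}=\det(1-q^pw\mid U)$. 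Since $S/(\Theta)$ is finite-dimensional, this is (2).

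\textbf{(2)$\Rightarrow$(3).} Let $F(w,q)$ be the class function in (2). If it is a genuine graded character, then each multiplicity $\langle F,\chi\rangle$ is a polynomial in $\mathbb{N}[q]$. Evaluating at $w=1$ shows $[p]_q^r$ is the total Hilbert series. For a reflection $s$ of order $e$ with nontrivial eigenvalue $\zeta$ on $V^*$, the value $F(s,q)$ must be a polynomial. The denominator contains $1-\zeta q$, so $1-q^p\lambda$ must vanish at $q=\zeta^{-1}$ for some eigenvalue $\lambda$ of $s$ on $U$. Faithfulness of $U$ (by \Cref{a:cor:faithful-hsop}, $U$ is a Galois twist of $V^*$, so $s$ has a single nontrivial eigenvalue there) then forces $s$ to act on $U$ as a reflection. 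Next, compute the multiplicities of the trivial and determinant characters by the standard Solomon/Molien averaging: the fake degrees of $\wedge^kU$ are the elementary symmetric functions in $q^{b_i}$ (respectively $q^{a_i}$ for the dual), and reflection-acting is exactly what makes Solomon's theorem on $(S\otimes\wedge U)^W$ apply. This gives $\langle F,1\rangle=\Cat^+_{p,U}(q)$ and $\langle F,\det\rangle$ equal to $\Cat^-_{p,U}$ up to a power of $q$ and sign. Both must therefore be polynomials.

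\textbf{(3)$\Rightarrow$(4).} This is pure case analysis over the Shephard--Todd list, using known fake-degree exponents of all Galois twists of $V^*$. Requiring each factor structure of $\prod(1-q^{p+b_i})/\prod(1-q^{d_i})$ to be polynomial becomes a cyclotomic divisibility condition: for every $d$, the number of $i$ with $d\mid p+b_i$ is at least the number of $j$ with $d\mid d_j$, and similarly with $p-a_i$. For $G(m,s,r)$ this reduces to congruence conditions on $p$ modulo $m$, $m/s$, and $r$; for the exceptionals it is a finite check by computer modulo $\operatorname{lcm}(d_i)$.

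\textbf{(4)$\Rightarrow$(1).} For each table entry I will construct a posh hsop. In the coprime-to-$h$ entries, use the Gordon--Griffeth Cherednik construction for $(p,(V^*)^{\sigma_p})$. For $G(m,s,r)$, use explicit Haiman-style compositions of equivariant maps, for example power maps $x_i\mapsto x_i^k$ composed with $S_r$-hsops, twisted by monomial factors. The remaining non-coprime exceptional seeds (such as $H_3$ in degree $5$) I would build by explicit computer search for $W$-equivariant embeddings $U\hookrightarrow S_p$ whose image has only the origin as common zero. A generic such embedding suffices once one is shown to exist, via a Gröbner or resultant check. Closure under composition then propagates the seeds through the table.

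I expect this last direction — producing actual hsops, especially for non-coprime $p$ and exceptional groups — to be the main obstacle, with the bookkeeping in (3)$\Rightarrow$(4) a close second.
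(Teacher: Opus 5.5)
Your cycle of implications and your toolkit match the paper's: the Koszul complex for (1)$\Rightarrow$(2), Orlik--Solomon multiplicities for the two Catalan products, a finite cyclotomic computer check for the exceptional groups, and Gordon--Griffeth plus computer-found seeds and composition for (4)$\Rightarrow$(1). However, two steps fail as written.

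The first is the reflection step in (2)$\Rightarrow$(3), which is circular. \cref{a:cor:faithful-hsop} concerns a module carried by an actual hsop, so it presupposes (1). Under (2) alone you may not use it to make $U$ a Galois twist. Your look at the zero $q=\zeta^{-1}$ of the denominator only shows that $\zeta^p$ is \emph{some} eigenvalue of $s$ on $U$, and that bounds nothing. Look instead at $q=1$. There $\det(1-qs\mid V^*)$ vanishes to order $r-1$ and $\det(1-q^ps\mid U)$ vanishes to order $\dim U^s$, so polynomiality gives $\dim U^s\ge r-1$, and faithfulness makes $s$ a reflection on $U$. Amenability of $U$ and $U^*$, which Orlik--Solomon needs, then follows from \cref{a:prop:two-sided-amenability}. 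The identification $U\cong(V^*)^\sigma$, which you silently use in (3)$\Rightarrow$(4) when you restrict to fake degrees of Galois twists, has to come from Marin--Michel (\cref{b:thm:MarinMichel}, extended in \cref{a:prop:faithful-reflection-module} and applied to $U^*$), not from the hsop lemma. Take the determinant multiplicity against $\det U$: the pairing $\bigwedge^jU\otimes(\det U)^*\cong\bigwedge^{r-j}U^*$ then gives exactly $\Cat^-_{p,U}$, with no stray sign or power of $q$. (Also, in the paper's convention the fake-degree series of $\bigwedge^kU$ is $e_k(q^{a_1},\dots,q^{a_r})$; the $q^{b_i}$ belong to $\bigwedge^kU^*$.)

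The second gap is in (4)$\Rightarrow$(1): you give no construction for the non-well-generated exceptional groups $G_7,G_{11},G_{12},G_{13},G_{15},G_{19},G_{22},G_{31}$. \Cref{c:thm:GG} requires $W$ to be well-generated. Finitely many computer-found seeds closed under composition only reach the multiplicative monoid generated by their degrees, whereas the tables require, for example, every prime $p\nmid 12$ for $G_7$. The missing ideas are two restriction arguments. First, a degree-$p$ covariant $F$ for a well-generated $H$ satisfies $F(\lambda v)=\lambda^pF(v)=\sigma_p(\lambda)F(v)$ for $\lambda\in\mu_M$. It is therefore automatically covariant for $\mu_MH$ and hence for any subgroup, and $G_7\subseteq\mu_{12}G_4$, $G_{11},G_{12},G_{13},G_{15}\subseteq\mu_{24}G_8$, $G_{19},G_{22}\subseteq\mu_{60}G_{16}$ (\cref{c:lem:scalar}). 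Second, for $G_{31}$, restrict an $E_8$ hsop of degree $p$ coprime to $30$ to the $i$-eigenspace of a regular element of order $4$, whose centralizer is $G_{31}$. Homogeneity sends this eigenspace to the $i^p$-eigenspace, which fixes the twist by $p\bmod 4$ (\cref{c:lem:eigenspace}). Finally, for $G(m,d,r)$ the plain coordinate powers $x_i\mapsto x_i^p$ already suffice. Composing with Haiman's $S_r$ maps would destroy equivariance under the diagonal subgroup. The resulting condition for $U=(V^*)^{\sigma_k}$ is just $p\equiv k\pmod m$ (or $p\equiv\pm k$ for $G(m,m,2)$), with nothing modulo $r$.
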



The Koszul character formula of~\cref{prop:koszul} proves
$(1)\Rightarrow(2)$; see D.~Bessis and V.~Reiner~\cite[Proposition~4.2 and its proof]{BR}.
For $(2)\Rightarrow(3)$, polynomiality of the determinant quotient
at each reflection forces it to act on $U$ as a reflection.
A theorem of Marin--Michel~\cite[Theorem~1.2]{MarinMichel}, recalled
in~\cref{b:thm:MarinMichel}, classifies faithful irreducible reflection
representations as Galois twists. Its extension to reducible modules
in~\cref{a:prop:faithful-reflection-module} then identifies $U$ as a
Galois twist of $V^*$, as shown in~\cref{c:prop:reduction}.
The exterior-invariant theorem of P.~Orlik and L.~Solomon~\cite[Theorem~3.1]{OS},
recalled in~\cref{b:thm:OS}, identifies the trivial and determinant
multiplicities with $\Cat^+_{p,U}$ and $\Cat^-_{p,U}$
in~\cref{cor:formal-products}. For an actual hsop, these are the
Hilbert series of $A_\Theta^W$ and $\Hom_W(\det\Theta,A_\Theta)$
in~\cref{a:thm:automatic}.
We then identify all pairs satisfying (3) and prove that each is posh
by constructing an hsop.

For the infinite families, coordinate powers supply both the hsops
and the fake-degree calculation used to prove necessity
in~\cref{a:prop:monomial,c:prop:infinite-classification}; Haiman's
construction in~\cref{c:ex:haiman} handles type $A$.
For the exceptional groups, a finite cyclotomic test lists the
candidates in~\cref{c:prop:exceptional-polynomiality}.
Gordon--Griffeth's construction covers the well-generated coprime
cases in~\cref{c:thm:GG}. The remaining well-generated cases reduce
by composition to eight seed hsops for $G_4,G_6,H_3,G_{27}$;
we prove this coverage in~\cref{c:prop:seed-coverage} and verify
the seed maps in~\cref{c:lem:seeds}.
Scalar extensions and restriction from $E_8$ to $G_{31}$ complete
the non-well-generated cases in~\cref{c:lem:scalar,c:lem:eigenspace}.

Two operations recur in our constructions:
\begin{itemize}
\item composition of posh hsops multiplies their common degrees and
composes their Galois twists in~\cref{c:lem:composition}; and
\item pairing an hsop carrying $U$ with basic $U$-valued covariants
produces invariant parameters of degrees $p+e_i(U^*)$;
for $U=V^*$, these are obtained by contracting the hsop map with
the differentials of basic invariants in~\cref{b:prop:invariant-parameters}.
\end{itemize}





\subsection{Posh parking spaces}
We now turn to the combinatorics of the (finite-dimensional) quotient of $S$ by a posh hsop.
\begin{definition}
Given an hsop $\Theta\subseteq S_p$, define
\[
 A_\Theta\defeq S/(\Theta).
\]
Here $(\Theta)$ denotes the (homogeneous and $W$-stable) ideal of $S$ generated by the elements
of $\Theta$.  
When $\Theta$ is posh, we call $A_\Theta$ the corresponding
\defn{posh parking space} with parameter $p$, or simply a \defn{parking space}.
\label{def:parking_space}
\end{definition}
This quotient has vector-space dimension $p^r$ and has the
structure of a graded $W$-module; see~\cref{prop:koszul}.
We say that a posh hsop is \defn{untwisted} when it
carries $V^*$; in this case the ungraded character of the parking space is
$w\mapsto p^{\dim V^w}$ by~\cref{eq:twisted-character}.  In particular, for Weyl groups at all degrees $p$, and for well-generated complex reflection groups at the Fuss degrees $p=mh+1$, the posh hsop is untwisted.

When $A_\Theta$ happens to be a permutation module, it is interesting to construct combinatorial models for the corresponding ungraded permutation module (similarly to how the group algebra of $W$ is an ungraded combinatorial model of the coinvariant algebra; see~\Cref{b:sec:coinvariants,prob:nonfaithful-hsops}).  Such combinatorial $p$-parking modules have been constructed at many levels of generality:

\begin{itemize}
\item For the symmetric group $S_n$, P.~H.~Edelman~\cite{Edelman} gave combinatorial evidence that noncrossing partitions could be used to construct $(h+1)$-parking spaces.  
  D.~Armstrong--N.~A.~Loehr--G.~S.~Warrington constructed rational parking functions with character $w\mapsto p^{\dim V^w}$ whenever $\gcd(n,p)=1$~\cite[Theorem~3 and Corollary~4]{ALWRational}.  
\item Haiman showed that, after forgetting the grading, $p$-parking spaces for $S_n$ have the permutation character of the finite torus $Q/pQ$~\cite{Haiman}.  For \emph{very good}
parameters of a Weyl group, E.~Sommers~\cite[Proposition 3.9]{Sommers} showed
that $Q/pQ$ has character $w\mapsto p^{\dim V^w}$.
\item For Coxeter groups, D.~Armstrong--V.~Reiner--B.~Rhoades~\cite{ARR} and
Rhoades~\cite{RhoadesFuss} constructed noncrossing (and nonnesting) $p$-parking spaces for $p=h+1$ and $p=mh+1$, respectively. 
\item T.~Douvropoulos and M.~Josuat-Verg\`es completed the character
formula for noncrossing $(mh+1)$-parking spaces in types $E_7$ and $E_8$,
and established its analogue for prime noncrossing $(mh-1)$-parking
spaces for irreducible Coxeter groups~\cite[Theorems~3.10 and~3.12]{DJVRefined}.
\item J.~Stack~\cite{StackParking} generalized these constructions for noncrossing $(mh+1)$-parking spaces to well-generated complex reflection groups.
\end{itemize}

Using character theory, Y.~Ito and S.~Okada~\cite[Theorem~1.4]{IO} classified exactly when the untwisted determinant quotient is the character of a graded $W$-module whose underlying ungraded module is a permutation representation.  
Our second main result resolves a question of Ito--Okada~\cite[Problem 7.3(2)]{IO}, since~\Cref{c:thm:main} specializes to an algebraic construction of these graded modules as posh parking spaces. 

\begin{restatable}{theorem}{permutationtheorem}\label{c:thm:untwisted}
Let $W$ be a finite irreducible complex reflection group and $\Theta$ a posh hsop. Then $A_\Theta$ is a permutation posh parking space iff $\Theta$ carries $V^*$.
\end{restatable}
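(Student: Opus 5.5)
The plan is to compute the ungraded character of $A_\Theta$ and compare it with the constraints a permutation character must satisfy. By \Cref{a:cor:faithful-hsop}, a posh $\Theta$ carries $U\cong(V^*)^\sigma$ for some Galois automorphism $\sigma$. By the Koszul resolution (\cref{prop:koszul}), the graded character of $A_\Theta$ at $w$ is $\det(1-q^pw\mid U)/\det(1-qw\mid V^*)$. Setting $q\to1$ gives the ungraded character, which is the twisted character formula \cref{eq:twisted-character}. Concretely, if $w$ has eigenvalues $\lambda_1,\dots,\lambda_r$ on $V^*$, then on $U$ it has eigenvalues $\sigma(\lambda_j)$. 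The limit is a product over $j$ of factors: the factor equals $p$ when $\lambda_j=1$; it equals $(1-\sigma(\lambda_j))/(1-\lambda_j)$ when $\lambda_j\neq1$ and $\sigma(\lambda_j)^{\,p}$-type terms are handled by taking the exact limit; and it vanishes in the appropriate cases. For an untwisted $\Theta$ we have $\sigma=\mathrm{id}$, and this gives $w\mapsto p^{\dim V^w}$.

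For the direction ($\Leftarrow$), assume $U\cong V^*$. The character is then $p^{\dim V^w}$, and Ito--Okada~\cite[Theorem~1.4]{IO} classify when this untwisted determinant quotient is the character of a graded module whose ungraded module is a permutation representation. So I must check that every untwisted posh pair $(p,V^*)$ from \Cref{c:thm:main}(4) lies in their permutation list. My plan is to show the two conditions coincide: the Ito--Okada criterion is stated through polynomiality of the same determinant quotient, which matches condition (2) of \Cref{c:thm:main}. Should the statement need more than this, I would fall back on the explicit tables and check them case by case. Alternatively, one could build a permutation basis directly, for instance $Q/pQ$ for Weyl groups (Sommers, Haiman), but the character route is cleaner.

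For the direction ($\Rightarrow$), assume $A_\Theta$ is a permutation module. A permutation character takes nonnegative rational integer values, and its value at $w$ equals the number of fixed points. Consider a regular-type element $w$ with $V^w=0$, for instance a generator of the center or any element with no eigenvalue $1$. Its character value is $\prod_j(1-\sigma(\lambda_j))/(1-\lambda_j)$, a product of cyclotomic units. For this value to be a positive integer (it must be nonzero, since $\det(1-w\mid U)\neq 0$), it has to equal $1$. The key steps are: take $w=c$, a suitable element, for example a reflection $s$ with eigenvalue $\zeta$, and compare on $\langle s\rangle$. Restricting a permutation module to the cyclic subgroup $\langle s\rangle$ yields another permutation module, whose character values at powers of $s$ are nonnegative integers. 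Using the factor $p^{r-1}\cdot(1-\sigma(\zeta))/(1-\zeta)$ at $s$, integrality and positivity force $(1-\sigma(\zeta))/(1-\zeta)$ to be a positive rational. A cyclotomic unit of this form is rational only if $\sigma(\zeta)=\zeta$, or if $\sigma(\zeta)=\bar\zeta$ when $\zeta=-1$. This would show that $\sigma$ fixes every eigenvalue of every reflection.

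The main obstacle is going from this to $\sigma|_{K_W}=\mathrm{id}$. The eigenvalues of reflections need not generate $K_W$; for real groups such as $H_3$ every reflection has eigenvalue $-1$. For that case I would use elements of higher order instead, such as a Coxeter element $c$ with eigenvalues $\zeta_h^{e}$. Here $\chi(c)=\prod_j(1-\sigma(\zeta_h^{e_j}))/(1-\zeta_h^{e_j})$, and requiring this to be a nonnegative integer, together with the analogous condition for all powers $c^k$, should force $\sigma$ to permute the multiset of eigenvalues of each element. By the Brauer character argument this means $\chi_U=\chi_{V^*}$, i.e.\ $U\cong V^*$. If the general argument stalls, the final fallback is to check the finitely many twisted pairs in \Cref{c:tab:exceptional} and \cref{tab:infinite-algebraic} against the Ito--Okada list.
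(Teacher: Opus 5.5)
Your ($\Leftarrow$) direction and your first step for ($\Rightarrow$) follow the paper. At an elliptic $w$ the value $\sigma(x)/x$, with $x=\det(1-w\mid V^*)$, is a unit and a positive integer, hence equal to $1$. The same unit argument at an arbitrary $w$ gives $\chi_{A_\Theta}(w)=p^{\dim V^w}$. Also, no factor of the $q\to1$ limit ever vanishes, since $\sigma(\lambda)\neq1$ for $\lambda\neq1$.

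The gap is the passage from these scalar identities to $U\cong V^*$. You plan to argue element by element, deducing from the conditions at a Coxeter element $c$ and its powers that $\sigma$ preserves the spectrum of $c$. That inference is not justified. Each element yields only one scalar equation. For $\gcd(k,\operatorname{ord}w)=1$ one has $\det(1-w^k\mid V^*)=\sigma_k(\det(1-w\mid V^*))$, which is automatically $\sigma$-fixed because the Galois group is abelian. So for an element of prime order the powers add nothing at all. Concretely, take $\zeta=\zeta_7$ and the multiset containing $\zeta^2$ and $\zeta^6$ three times each, $\zeta^4$ twice and $\zeta^3$ once. It has $\prod(1-\lambda)=\bigl((1-\zeta)(1-\zeta^2)(1-\zeta^4)\bigr)^3$, which is $\sigma_2$-fixed, yet $\sigma_2$ moves the multiset.

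The fallbacks do not close the gap either. The non-well-generated groups have no Coxeter element. The table check is not finite, since the infinite families contain infinitely many twisted pairs. The Ito--Okada list concerns only the untwisted character, so it cannot by itself exclude a twisted quotient.

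The missing idea is to use all of $W$ at once rather than one cyclic subgroup at a time. Since $\det(1-w\mid V^*)=0$ for nonelliptic $w$, your elliptic identity says the class function $w\mapsto\det(1-w\mid V^*)=\sum_{j=0}^r(-1)^j\chi_{\wedge^jV^*}(w)$ is fixed by $\sigma$ on all of $W$. The exterior powers $\bigwedge^jV^*$ are irreducible and pairwise nonisomorphic (Steinberg's theorem; the paper cites Hu). Linear independence of irreducible characters then forces $\sigma$ to permute them compatibly with the signs $(-1)^j$. Hence $(V^*)^\sigma\cong\bigwedge^jV^*$ for some odd $j$ with $\binom rj=r$, so $j\in\{1,r-1\}$. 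For $r>2$, a reflection fixes a subspace of codimension $r-1$ in $\bigwedge^{r-1}V^*$ but of codimension one in $(V^*)^\sigma$, which leaves $j=1$. For $r=1$, the identity $\det(1-w\mid V^*)=1-\chi_{V^*}(w)$ settles it directly, and for $r=2$ the two cases coincide. With this step your reflection-eigenvalue argument becomes unnecessary.
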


The classification is given in~\cref{tab:infinite-untwisted,tab:exceptional-untwisted}.
For non-dihedral $W$, these are also exactly the degrees in which $w\mapsto p^{\dim V^w}$ is a permutation character.  For the non-dihedral members of the infinite family $G(m,d,r)$, an untwisted parking space with parameter $p$ exists iff $p\equiv1\pmod m$.

Just as in~\Cref{prop:coprime-hsop}, for well-generated groups and degrees coprime to the Coxeter number,
we obtain an easily-stated specialization.

\begin{restatable}{corollary}{coxeterpermutationcorollary}\label{cor:coxeter-permutation}
Let $W$ be an irreducible well-generated complex reflection group with
Coxeter element\footnote{By \emph{Coxeter element}, we mean a
$\zeta_h$-regular element, where $\zeta_h=\exp(2\pi i/h)$;
thus $cv=\zeta_hv$ for some $v$ outside every reflecting hyperplane.
See~\cite[Section~1.1]{RRS}.} $c$ of order $h$, and let $\Theta$ be a posh hsop of degree $p$
with $\gcd(p,h)=1$. Write $e_i\defeq d_i-1$ for the exponents of $W$.

The following are equivalent:
\begin{itemize}
\item $\Theta$ carries $V^*$;
\item $A_\Theta$ is a permutation posh parking space;
\item $c^p$ is conjugate to $c$ in $W$; and
\item $
 \{p e_i\bmod h\}_{i=1}^r=\{e_i\}_{i=1}^r$
as multisets, where residues are taken in $\{0,\ldots,h-1\}$.
\end{itemize}
\end{restatable}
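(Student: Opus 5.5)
The plan is to prove the four-way equivalence in \Cref{cor:coxeter-permutation} by chaining together \Cref{c:thm:untwisted}, \Cref{prop:coprime-hsop}, and standard facts about Coxeter elements from Springer theory.

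\emph{First and second items.} These are equivalent by \Cref{c:thm:untwisted} applied directly to $\Theta$.

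\emph{First item in terms of $\sigma_p$.} Since $\gcd(p,h)=1$ and $W$ is well-generated, \Cref{prop:coprime-hsop} says $\Theta$ carries $(V^*)^{\sigma_p}$. So $\Theta$ carries $V^*$ if and only if $(V^*)^{\sigma_p}\cong V^*$, that is, if and only if $\chi_{V}(w)^{\sigma_p}=\chi_V(w)$ for all $w\in W$. Here $\sigma_p$ is well defined on $K_W$ because $\mathfrak f_W\mid h$.

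\emph{Reducing the Galois condition to conjugacy of $c$ and $c^p$.} The character $\chi_V$ takes values in $\mathbb{Q}(\zeta_{|W|})$, and applying the Galois automorphism $\zeta\mapsto\zeta^p$ sends $\chi_V(w)$ to $\chi_V(w^p)$ whenever $\gcd(p,|w|)=1$. In particular $\chi_V(c^p)=\sigma_p(\chi_V(c))$. The eigenvalues of $c$ on $V$ are $\zeta_h^{-e_i}$ (equivalently $\zeta_h^{e_i}$ on $V^*$), by Springer's theorem for the $\zeta_h$-regular element $c$. Hence the eigenvalues of $c^p$ are $\zeta_h^{pe_i}$, and the multiset condition in the fourth item says precisely that $c$ and $c^p$ have the same spectrum.

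The remaining claims will be proved in this order:
\begin{itemize}
\item[(a)] If $c^p$ is conjugate to $c$, then $\sigma_p$ fixes $\chi_V$.
\item[(b)] If $\sigma_p$ fixes $\chi_V$, then $\{pe_i\}\equiv\{e_i\}$ as multisets modulo $h$.
\item[(c)] If $\{pe_i\}\equiv\{e_i\}$, then $c^p$ is conjugate to $c$.
\end{itemize}

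\emph{Step (b).} If $\sigma_p$ fixes $\chi_V$, it fixes the characters of all exterior powers $\wedge^k V$. Evaluating at $c$ then shows that $\sigma_p$ fixes the characteristic polynomial $\prod_i(t-\zeta_h^{e_i})$. Since $\sigma_p$ acts on $\zeta_h$ as $\zeta_h\mapsto\zeta_h^p$, the multiset $\{\zeta_h^{e_i}\}$ is stable under raising to the $p$-th power, which gives the fourth item.

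\emph{Step (c).} If $c^p$ has eigenvalue $\zeta_h^{p\cdot 1}$ with the same eigenvector $v$ as $c$, then $c^p$ is $\zeta_h^p$-regular. The multiset condition forces $\zeta_h$ to be an eigenvalue of $c^p$, because $e_1=1$ appears among the $pe_i\bmod h$. Since $\zeta_h$ and $\zeta_h^p$ are both primitive $h$-th roots, $\gcd(p,h)=1$ gives $(c^p)^{p'}=c$ for $pp'\equiv1\pmod h$. So $c^p$ is a regular element, and the question is whether the $\zeta_h$-eigenspace of $c^p$ contains a regular vector.

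This is the main obstacle. My first attempt would use Springer's result that regular elements with the same eigenvalue are conjugate, combined with Lehrer--Springer: the dimension of the $\zeta$-eigenspace of any element is at most $\#\{i:\zeta^{d_i}=1\}$, with equality exactly when a regular element is involved. Because $c^p$ is regular of order $h$, its eigenspace for every primitive $h$-th root $\zeta$ has the maximal dimension $a(\zeta)=\#\{i: h\mid d_i\}$. Equality of spectra then makes $c^p$ have a maximal-dimensional $\zeta_h$-eigenspace. By Lehrer--Springer this eigenspace meets the regular locus, so $c^p$ is $\zeta_h$-regular and therefore conjugate to $c$ by Springer.

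\emph{Step (a).} Conjugacy of $c$ and $c^p$ is used to show $\sigma_p$ fixes $\chi_V$ on all of $W$. Since $K_W$ is generated by character values, it suffices to show that $\sigma_p|_{K_W}$ is trivial. I would invoke the well-generated fact that $K_W=\mathbb{Q}(\chi_V(c))$, or more robustly that $K_W$ is generated by the values of $\chi_{\wedge^kV}(c)$, citing the known description of the field of definition via Coxeter eigenvalues. Then $\sigma_p(\chi_{\wedge^k V}(c))=\chi_{\wedge^kV}(c^p)=\chi_{\wedge^kV}(c)$ finishes the argument. If that citation is unavailable, I would fall back on \Cref{c:thm:main}(3): compare the fake degrees of $(V^*)^{\sigma_p}$, which are the $pe_i$ reduced appropriately, with those of $V^*$, and conclude by the classification tables that the twist is trivial.
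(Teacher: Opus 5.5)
Your proposal is correct and follows essentially the paper's route. Like you, the paper combines \cref{prop:coprime-hsop} with \cref{c:thm:untwisted}, uses the Reiner--Ripoll--Stump result that the characteristic-polynomial coefficients of $c$ generate $K_W$ (your ``more robust'' option in step (a)), and uses Springer's eigenvalue theorem. The one difference is that the paper simply cites Reiner--Ripoll--Stump's Lemma~3.4 for ``equal spectra implies conjugate'', whereas you reprove it from Springer's theory of maximal eigenspaces.

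Two justifications in that reproof are wrong, though neither is needed:
\begin{itemize}
\item $e_1=1$ fails outside Coxeter groups; for $G_4$, $e_1=3$.
\item A regular element of order $h$ need not have every primitive $h$th root as an eigenvalue. The Coxeter element of $G_4$ has eigenvalues $-1,\zeta_6$ on $V$, which is exactly why $c^5$ is not conjugate to $c$.
\end{itemize}
Argue instead that equal spectra give $\dim\ker(c^p-\zeta_h\mid V)=\dim\ker(c-\zeta_h\mid V)=\#\{i:h\mid d_i\}$, which is maximal. Springer's theorem that maximal $\zeta_h$-eigenspaces are $W$-conjugate then carries the regular vector of $c$ into this eigenspace, so $c^p$ is $\zeta_h$-regular and hence conjugate to $c$.
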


\begin{remark}
P.~Galashin--T.~Lam--M.-T.~Trinh--N.~Williams constructed rational noncrossing parking objects for finite Coxeter groups and proved that there are $p^r$ such objects when $\gcd(p,h)=1$~\cite[Theorem~1.8]{GLTWRational}.  Realizing these objects as an ungraded parking module also requires a $W$-action with the desired character.
For well-generated groups and $p$ coprime to $h$, we plan to construct combinatorial models for ungraded permutation posh parking spaces, generalizing our work in~\cite{DefantWilliams} where we used Springer's theorem that $c^p$ and $c$ are always conjugate in a Weyl group to construct and explain reflection subword models for rational Catalan objects.
\end{remark}

\subsection{Outline}
We give background in \cref{sec:background}. In \cref{sec:generators}
we establish the generator restriction and multiplicity formulas, and
prove the permutation criterion~\cref{c:thm:untwisted} using a
specialization of the character formula~\cref{eq:twisted-character}. The classification proof
in \cref{sec:proof-algebraic} proceeds through the infinite families,
the exceptional polynomiality test, the seed constructions, and the
restriction constructions, then assembles these results and deduces
the coprime specialization. In \cref{sec:questions} we ask which
nonfaithful parameter modules occur and discuss constructions with
generators of different degrees. The full classification tables
appear in \cref{sec:tables}.


\section{Background}\label{sec:background}

Recall that $S\defeq\C[V]=\Sym(V^*)$ is the polynomial ring on $V$.
In this section, we use the coinvariant algebra to locate copies of a
prescribed $W$-module in $S$, and explain how an hsop determines the
character of its quotient. We then collect the representation-theoretic
results and earlier constructions used to prove \cref{c:thm:main}.

\subsection{Coinvariants and parameter spaces}\label{b:sec:coinvariants}


All representations are over $\C$. A \defn{complex reflection} is a
finite-order linear transformation whose fixed space is a hyperplane. A
\defn{complex reflection group} is a finite subgroup of
$\operatorname{GL}(V)$ generated by complex reflections. A rank-$r$
group is \defn{well-generated} if it can be generated by $r$ reflections.
We let $W$ act on $S$ by
$(wf)(x)\defeq f(w^{-1}x)$, and write
$[p]_q\defeq1+q+\cdots+q^{p-1}$.

The invariant ring is a polynomial algebra
$S^W=\C[f_1,\ldots,f_r]$ on homogeneous \defn{basic invariants}; this is
the theorem of G.~C.~Shephard and J.~A.~Todd~\cite{ST} and C.~Chevalley~\cite{Chevalley}.
Their degrees $d_i\defeq\deg f_i$ are the \defn{invariant degrees}.
The \defn{coinvariant algebra} is
\[
 C_W\defeq S/(f_1,\ldots,f_r),
\]
where $(f_1,\ldots,f_r)$ is the ideal of $S$ generated by the
positive-degree homogeneous $W$-invariants.
Chevalley~\cite{Chevalley} proves that $C_W$ affords the regular
representation of $W$. Choosing graded $W$-equivariant lifts of
$C_W$ to $S$ and multiplying by invariants gives an isomorphism
of graded $S^W$-modules and $W$-modules
\begin{equation}\label{b:eq:coinvariant-decomposition}
 S\cong S^W\otimes C_W,
\end{equation}
with $W$ acting trivially on the first factor.
In particular,
\[
 \Hilb(S^W;q)=\prod_{i=1}^r\frac1{1-q^{d_i}},\qquad
 \Hilb(C_W;q)=\prod_{i=1}^r[d_i]_q,\qquad
 \dim C_W=\prod_{i=1}^r d_i=|W|.
\]

For a $W$-module $M$, its \defn{fake degrees}
$e_1(M),\ldots,e_{\dim M}(M)$ record its graded multiplicities in $C_W$:
\[
 \sum_{j\ge0}\dim\Hom_W(M,(C_W)_j)q^j
       =\sum_{i=1}^{\dim M}q^{e_i(M)}.
\]
The regular representation ensures that this list has $\dim M$ entries,
counted with multiplicity. In particular, $e_i(V)=d_i-1$.

For an irreducible module $U$, the following formula tells us in which
degrees to look for hsops carrying $U$, by determining when $U$ occurs
in $S_p$. Such a copy must still have only the origin as its common zero.

\begin{proposition}\label{prop:hsop-multiplicity}
For a $W$-module $U$, put $m_U(p)\defeq\dim\Hom_W(U,S_p)$. Then
\[
 \sum_{p\ge0}m_U(p)q^p
 =\frac{\sum_{i=1}^{\dim U}q^{e_i(U)}}
        {\prod_{i=1}^r(1-q^{d_i})}.
\]
\end{proposition}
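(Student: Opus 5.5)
The plan is to use the $W$-equivariant graded decomposition \eqref{b:eq:coinvariant-decomposition}, $S\cong S^W\otimes C_W$, and compute $\Hom_W(U,-)$ degree by degree. Since $W$ acts trivially on $S^W$, for each $p$ we have
\[
 S_p\cong\bigoplus_{j+k=p}(S^W)_j\otimes (C_W)_k
\]
as $W$-modules, where $(S^W)_j$ is a trivial module of dimension $\dim(S^W)_j$. Applying $\Hom_W(U,-)$, which is additive and commutes with tensoring by a trivial module, gives
\[
 m_U(p)=\sum_{j+k=p}\dim(S^W)_j\cdot\dim\Hom_W\bigl(U,(C_W)_k\bigr).
\]

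This is a coefficientwise Cauchy product. Multiplying by $q^p$ and summing over $p$ factors the generating function as
\[
 \Hilb(S^W;q)\cdot\sum_{k\ge0}\dim\Hom_W(U,(C_W)_k)q^k .
\]
The first factor is $\prod_i(1-q^{d_i})^{-1}$ by Chevalley--Shephard--Todd, as recorded in \cref{b:sec:coinvariants}. The second factor is $\sum_{i=1}^{\dim U}q^{e_i(U)}$ by the definition of fake degrees. Together these give the claimed formula.

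No step is a serious obstacle. The one point that needs care is that \eqref{b:eq:coinvariant-decomposition} is an isomorphism of \emph{graded} $W$-modules. To see this, choose a $W$-stable graded complement $H\subseteq S$ to the ideal $(f_1,\dots,f_r)$, which exists by Maschke's theorem applied degree by degree. Then $H\cong C_W$ as graded $W$-modules, and multiplication $S^W\otimes H\to S$ is a $W$-equivariant graded isomorphism. Surjectivity follows by a graded Nakayama-type induction on degree. Bijectivity can then be checked on Hilbert series, since $S$ is a free $S^W$-module, or we can simply invoke the cited statement. Finally, the sum on the right of the formula has $\dim U$ terms because $C_W$ is the regular representation, but the generating-function identity holds without needing this fact.
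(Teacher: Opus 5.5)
Your proposal is correct and follows the paper's proof: the paper likewise applies $\Hom_W(U,-)$ to the graded decomposition $S\cong S^W\otimes C_W$, reading the fake-degree series off the $C_W$ factor and $\prod_i(1-q^{d_i})^{-1}$ off the invariant factor. Your degree-by-degree Cauchy-product bookkeeping and your justification of the graded isomorphism simply make explicit what the paper's one-line proof leaves implicit.
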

\begin{proof}
Apply $\Hom_W(U,-)$ to~\cref{b:eq:coinvariant-decomposition}.
The fake degrees give the multiplicity series in $C_W$, and the
invariant factor has Hilbert series $\prod_i(1-q^{d_i})^{-1}$.
\end{proof}

\begin{example}\label{ex:h3-multiplicity}
Continue \cref{ex:intro-h3} from the introduction, and let
$(V^*)^\sigma$ be the nontrivial Galois twist of $V^*$. The invariant degrees are
$2,6,10$, whereas the two fake-degree lists are $1,5,9$ and $3,5,7$.
Thus
\begin{align*}
 \sum_{p\ge0}m_{V^*}(p)q^p
 &=\frac{q+q^5+q^9}{(1-q^2)(1-q^6)(1-q^{10})}
   =q+q^3+2q^5+3q^7+4q^9+6q^{11}+O(q^{13}),\\
 \sum_{p\ge0}m_{(V^*)^\sigma}(p)q^p
 &=\frac{q^3+q^5+q^7}{(1-q^2)(1-q^6)(1-q^{10})}
   =q^3+2q^5+3q^7+4q^9+5q^{11}+O(q^{13}).
\end{align*}
Both modules occur with multiplicity two in degree five; we construct
hsops of both types in~\cref{c:lem:seeds}. The common-zero condition
still needs checking: the unique copy of $V^*$ in degree three is
$QV^*$, where $Q$ is a quadratic invariant, and it vanishes on the
nonzero points of $Q=0$. On the other hand, the unique twisted copy in degree three is
an hsop by~\cref{c:thm:GG}, since $3$ is coprime to $10$ and
$\sigma_3$ restricts to the nontrivial automorphism of $K_{H_3}$.
\end{example}

\subsection{The quotient character}\label{b:sec:quotient-character}

The equivariant Koszul calculation gives the graded character
of the quotient $A_\Theta=S/(\Theta)$ from~\cref{def:parking_space}; see
Bessis--Reiner~\cite[Proposition~4.2 and its proof]{BR}.
We recall the argument for a $W$-stable hsop $\Theta$.
This formula gives
$(1)\Rightarrow(2)$ in \cref{c:thm:main}; we also use it to calculate
invariant multiplicities in \cref{a:thm:automatic} and the ungraded
parking character in \cref{c:thm:untwisted}.

Let $\Theta\subseteq S_p$ be a $W$-stable hsop. Evaluation defines a
homogeneous equivariant map
\[
 V\longrightarrow\Theta^*,\qquad
 v\longmapsto\bigl(\theta\longmapsto\theta(v)\bigr).
\]
Its zero fibre is the common zero set of $\Theta$, which is the
origin. Equivalently, $A_\Theta$ is finite-dimensional. The evaluation
map is finite: homogeneous lifts of a basis of $A_\Theta$ generate
$S$ over the subalgebra $\C[\Theta]$ by induction on degree.
Conversely, a finite evaluation map has a finite zero fibre; homogeneity
forces this fibre to be the origin.
A basis $\theta_1,\ldots,\theta_r$ of $\Theta$ is a regular sequence,
meaning that $\theta_i$ is a nonzerodivisor modulo its predecessors.
Indeed, $S$ is Cohen--Macaulay and these $r$ elements form a system
of parameters~\cite[Lemmas~10.104.2 and~10.104.7]{Stacks}.

\begin{proposition}\label{prop:koszul}
Let $\Theta\subseteq S_p$ be a $W$-stable hsop. Then
\[
 \Hilb(A_\Theta;q)=[p]_q^r,
 \qquad
 \chi_{A_\Theta}(w;q)=
 \frac{\det(1-q^p w\mid\Theta)}{\det(1-qw\mid V^*)}.
\]
\end{proposition}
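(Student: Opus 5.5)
The plan is to resolve $A_\Theta$ by the equivariant Koszul complex of the regular sequence $\theta_1,\ldots,\theta_r$ and read off Hilbert series and graded characters by taking Euler characteristics. The preceding discussion shows that a basis of $\Theta$ is a regular sequence in the Cohen--Macaulay ring $S$. Hence the Koszul complex
\[
 0\to S\otimes\textstyle\bigwedge^r\Theta\to\cdots\to S\otimes\bigwedge^1\Theta\to S\to A_\Theta\to0,
\]
with $\bigwedge^k\Theta$ placed in internal degree $kp$, is exact. The differential $s\otimes(\theta_{i_1}\wedge\cdots\wedge\theta_{i_k})\mapsto\sum_j(-1)^{j-1}\theta_{i_j}s\otimes(\cdots\widehat{\theta_{i_j}}\cdots)$ is defined basis-free as the contraction $S\otimes\bigwedge^k\Theta\to S\otimes\bigwedge^{k-1}\Theta$ using multiplication $\Theta\otimes S\to S$. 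It is therefore $W$-equivariant and homogeneous of degree $0$ once the grading shift is included.

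Taking the alternating sum of graded characters along this exact sequence of finite-dimensional graded pieces gives
\[
 \chi_{A_\Theta}(w;q)=\chi_S(w;q)\sum_{k=0}^r(-1)^kq^{kp}\chi_{\bigwedge^k\Theta}(w)=\chi_S(w;q)\det(1-q^pw\mid\Theta).
\]
This uses the standard identity $\sum_k(-t)^k\operatorname{tr}(w\mid\bigwedge^k\Theta)=\det(1-tw\mid\Theta)$, which follows by diagonalizing $w$. Molien's formula for the symmetric algebra, $\chi_S(w;q)=\sum_j\operatorname{tr}(w\mid\Sym^jV^*)q^j=1/\det(1-qw\mid V^*)$, again follows by diagonalizing $w$ on $V^*$. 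Combining the two yields the stated character formula.

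The Hilbert series is the specialization $w=1$:
\[
 \frac{(1-q^p)^r}{(1-q)^r}=[p]_q^r.
\]
Setting $q=1$ then gives $\dim A_\Theta=p^r$.

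The only genuine input is exactness of the Koszul complex, which is exactly the regular-sequence property already established. The main point needing care is that all maps are equivariant and respect the grading shift $S(-kp)$, so that the Euler characteristic may be computed degree by degree in the ring of class functions.
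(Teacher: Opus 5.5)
Your proposal is correct and follows essentially the same route as the paper: the equivariant Koszul resolution of $A_\Theta$, exact because a basis of $\Theta$ is a regular sequence, whose alternating graded character factors as $\chi_S(w;q)\det(1-q^pw\mid\Theta)$ by diagonalizing $w$, with the Hilbert series obtained at $w=1$. Your basis-free description of the differential as a contraction is a clean way to see equivariance; it is equivalent to the paper's check that $w(f\theta_a)=(wf)(w\theta_a)$.
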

\begin{proof}
The Koszul complex has terms
$K_j\defeq S\otimes\bigwedge^j\Theta$, with each element of $\Theta$
in its original degree $p$, and differential
\[
 d_j(f\otimes\theta_1\wedge\cdots\wedge\theta_j)
 =\sum_{a=1}^j(-1)^{a-1}f\theta_a\otimes
       \theta_1\wedge\cdots\widehat{\theta_a}\cdots\wedge\theta_j.
\]
The differential preserves total degree. The action on $K_j$ is diagonal,
and $w(f\theta_a)=(wf)(w\theta_a)$ shows that $d_j$ commutes with $W$.
Regularity makes this complex exact in positive degrees~\cite[Lemma~15.31.2]{Stacks},
giving the exact sequence of graded $W$-modules
\[
 0\longrightarrow K_r\longrightarrow\cdots\longrightarrow K_1
 \longrightarrow S\longrightarrow A_\Theta\longrightarrow0.
\]
Its alternating graded character is
\[
 \chi_{A_\Theta}(w;q)
 =\chi_S(w;q)\sum_{j=0}^r(-q^p)^j\chi_{\wedge^j\Theta}(w).
\]
Diagonalizing $w$ gives
$\chi_S(w;q)=\det(1-qw\mid V^*)^{-1}$ and identifies the sum with
$\det(1-q^pw\mid\Theta)$. At $w=1$, the quotient is
$(1-q^p)^r/(1-q)^r=[p]_q^r$.
\end{proof}

\subsection{Exterior invariants}\label{b:sec:exterior}

The character of $A_\Theta$ in \cref{prop:koszul} is an alternating
sum of the characters of $S\otimes\bigwedge^j\Theta$. To compute its
determinant multiplicity in \cref{a:thm:automatic},
we use the exterior-algebra theorem of
Orlik--Solomon~\cite[Theorem~3.1]{OS} in the following form
(see also~\cite[Theorem~2.7]{AW}).

Recall that a $W$-module $M$ is \defn{amenable} if
$\sum_i e_i(M)=e_1(\det M)$, where
$\det M\defeq\bigwedge^{\dim M}M$ is its one-dimensional determinant
representation.

\begin{theorem}[{Orlik--Solomon~\cite[Theorem~3.1]{OS}}]
\label{b:thm:OS}
If $M$ is amenable, then $(S\otimes\bigwedge M^*)^W$ is an exterior
algebra over $S^W$ on $\dim M$ generators with bidegrees
$(e_i(M),1)$. Consequently,
\[
 \Hilb\bigl((S\otimes\textstyle\bigwedge M^*)^W;q,u\bigr)
 =\frac{\prod_{i=1}^{\dim M}(1+uq^{e_i(M)})}
        {\prod_{i=1}^r(1-q^{d_i})},
\]
where $q$ records polynomial degree and $u$ records exterior degree.
\end{theorem}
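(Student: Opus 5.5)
We would follow Solomon's classical argument for the case $M=V$, adapted to an arbitrary amenable $M$. Put $m\defeq\dim M$. By \cref{b:eq:coinvariant-decomposition} and \cref{prop:hsop-multiplicity}, the module of covariants $(S\otimes M^*)^W\cong\Hom_W(M,S)$ is a free $S^W$-module of rank $m$. Its homogeneous basis $\varphi_1,\ldots,\varphi_m$ has degrees $e_1(M),\ldots,e_m(M)$, obtained by lifting the copies of $M$ in $C_W$. These $\varphi_i$ will be the proposed exterior generators in bidegrees $(e_i(M),1)$. Since the Hilbert series formula follows immediately once the algebra structure is known, it suffices to prove two things: the monomials $\varphi_I\defeq\varphi_{i_1}\wedge\cdots\wedge\varphi_{i_k}$ are linearly independent over $S^W$, and they span $(S\otimes\bigwedge M^*)^W$ over $S^W$.

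\textbf{Step 1 (independence over the fraction field).} Let $L\defeq\operatorname{Frac}(S)$, so that $L^W=\operatorname{Frac}(S^W)$ and $L/L^W$ is Galois with group $W$. By Galois descent (Speiser's lemma), $L\otimes_{L^W}(L\otimes M^*)^W\to L\otimes M^*$ is an isomorphism. Moreover $(L\otimes M^*)^W=L^W\otimes_{S^W}(S\otimes M^*)^W$ is spanned over $L^W$ by the $\varphi_i$. Hence $\varphi_1,\ldots,\varphi_m$ form an $L$-basis of $L\otimes M^*$. In particular the top wedge $\Phi\defeq\varphi_1\wedge\cdots\wedge\varphi_m$ is a nonzero element of $(S\otimes\det M^*)^W$, and it has degree $\sum_ie_i(M)$. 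The same isomorphism shows that the $\varphi_I$ form an $L$-basis of $L\otimes\bigwedge M^*$, which already gives independence over $S^W$.

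\textbf{Step 2 (using amenability).} Since $\det M$ is one-dimensional, the space $(S\otimes\det M^*)^W\cong\Hom_W(\det M,S)$ is a free $S^W$-module of rank one, generated by some $J$ of degree $e_1(\det M)$. We write $\Phi=fJ$ with $f\in S^W$. Amenability says $\deg\Phi=\deg J$, so $f$ is a nonzero constant. Hence $\Phi$ itself generates $(S\otimes\det M^*)^W$ over $S^W$. We expect this step, together with the nonvanishing of $\Phi$ in Step 1, to be the crux: without amenability the $\varphi_I$ still form an $L$-basis, but $\Phi$ need not generate the determinant-isotypic part, and the spanning argument below breaks down.

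\textbf{Step 3 (spanning).} Let $\omega\in(S\otimes\bigwedge^kM^*)^W$. By Step 1 we may write $\omega=\sum_{|I|=k}c_I\varphi_I$ with $c_I\in L$. Applying $w\in W$ and using uniqueness of coefficients shows that $c_I\in L^W$. For each $I$, we wedge with the complementary monomial $\varphi_{I^c}$ to get $\omega\wedge\varphi_{I^c}=\pm c_I\Phi$. The left-hand side lies in $(S\otimes\det M^*)^W=S^W\Phi$, so $c_I\in S^W$. Thus the $\varphi_I$ span $(S\otimes\bigwedge M^*)^W$ over $S^W$, and the algebra is exterior on the $\varphi_i$. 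Reading off bidegrees, each generator contributes a factor $1+uq^{e_i(M)}$ and $S^W$ contributes $\prod_i(1-q^{d_i})^{-1}$, which gives the stated Hilbert series.
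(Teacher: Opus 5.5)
Your argument is correct and complete. The paper does not prove this statement; it only cites Orlik--Solomon. Your three steps are the classical Solomon-style argument behind that citation: independence of the basic covariants over $\operatorname{Frac}(S)$, then amenability forcing $\varphi_1\wedge\cdots\wedge\varphi_m$ to be a nonzero constant multiple of the generator of $(S\otimes\det M^*)^W$, then spanning by wedging with complementary monomials. The one step you assert without justification is $(L\otimes M^*)^W=L^W\otimes_{S^W}(S\otimes M^*)^W$. It follows by clearing denominators with a $W$-invariant denominator of the form $\prod_{w\in W}w(g)$.
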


\subsection{Galois twists}\label{b:sec:galois}


As in the introduction, let $K_W\defeq\Q(\chi_V(w):w\in W)$.
Its \defn{conductor} $\mathfrak f_W$ is the least positive integer
such that $K_W\subseteq\Q(\zeta_{\mathfrak f_W})$. Every irreducible
$W$-module can be realized over $K_W$ by Benard--Bessis~\cite{Benard,BessisField};
see also Marin--Michel~\cite[Introduction]{MarinMichel}.
For $\gcd(k,\mathfrak f_W)=1$, let $\sigma_k$ send
$\zeta_{\mathfrak f_W}$ to $\zeta_{\mathfrak f_W}^k$.
Applying it to a matrix realization defines the \defn{Galois twist}
$M^{\sigma_k}$; equivalently,
\[
 \chi_{M^{\sigma_k}}(w)=\sigma_k(\chi_M(w)).
\]
Only the restriction to $K_W$ affects the isomorphism class, so
different $k$ can specify the same twist. The notation $\sigma_p$
requires $\gcd(p,\mathfrak f_W)=1$; otherwise the twist must be
specified independently of the degree. The conductor $\mathfrak f_W$
measures the cyclotomic field needed to define the twists; the degree
period $M$ records when a fixed twist occurs as an hsop. For example,
$K_{H_3}=\Q(\sqrt5)$ has conductor $5$, whereas its hsop degrees have
period $10$ in \cref{c:tab:exceptional}: the degree conditions also
exclude even integers.

When $\Theta$ carries $(V^*)^\sigma$, its evaluation map has
target $\Theta^*\cong V^\sigma$. This accounts for the two fake-degree lists in
\cref{c:thm:main}. For polynomial maps we extend $\sigma$ to an
automorphism of $\C$ before applying it to coefficients.

\begin{theorem}[{Marin--Michel~\cite[Theorem~1.2]{MarinMichel}}]
\label{b:thm:MarinMichel}
Let $W\subseteq\operatorname{GL}(V)$ be an irreducible complex
reflection group. If $M$ is a faithful irreducible $W$-module on which
every reflection of $W$ acts as a reflection, then $M\cong V^\sigma$
for some $\sigma\in\operatorname{Gal}(K_W/\Q)$.
\end{theorem}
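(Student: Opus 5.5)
I would prove this by reducing to the Shephard--Todd classification and then doing a representation-theoretic check family by family. This is essentially the strategy behind Marin--Michel's result.

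\emph{Step 1: reduction to reflection groups.} Let $\rho\colon W\to\operatorname{GL}(M)$ be the given representation. Since $W$ is generated by reflections and each reflection acts on $M$ as a reflection, $W'\defeq\rho(W)$ is a complex reflection group in $\operatorname{GL}(M)$. It is irreducible because $M$ is, and faithfulness gives $W'\cong W$ as abstract groups. Moreover $\rho$ sends reflections of $W$ to reflections of $W'$, preserving orders. I expect it also maps the set of reflections onto the set of reflections of $W'$: I would try to show that any reflection of $W'$ lies in a conjugate of a cyclic subgroup $\rho(\langle s\rangle)$ by comparing the counts $N$ and $N^*$ in $W$ and $W'$, though I have not checked this. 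From this I would aim to conclude that $W$ and $W'$ have the same multiset of invariant degrees. The route I would try first goes through the identity $\sum_i(d_i-1)=N$ together with $\prod_i d_i=|W|$; these two constraints alone do not force $\dim M=r$. So I would also use the character-theoretic constraint $\chi_M(s)=\dim M-1+\zeta_s'$ on each reflection $s$, where $\zeta_s'$ is the nontrivial eigenvalue of $\rho(s)$, inside the orthogonality relations. With this, the pair $(W',M)$ becomes an entry of the Shephard--Todd list with the same order, the same reflection-order data, and hence the same type as $W$.

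\emph{Step 2: infinite family $G(m,d,r)$.} Here I would use the explicit Clifford-theoretic parametrization of irreducibles of $G(m,1,r)=C_m\wr S_r$ by $m$-multipartitions of $r$, restricted to the index-$d$ subgroup. I would then evaluate the characters on the two kinds of reflections: the diagonal ones, and the transposition-type ones $s_{ij}^{(k)}$. The requirement that transpositions act with eigenvalue $1$ of multiplicity $\dim M-1$ forces the multipartition to be of ``hook-of-one-box'' shape, namely $(\,(r-1),\,\emptyset,\ldots,(1),\ldots)$ up to the cyclic shift. These are exactly the representations obtained from $V$ by replacing $\zeta_m$ with a primitive $\zeta_m^k$. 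This is a Galois twist, with the finite list of small-rank coincidences (e.g.\ $r=2$, or $G(2,2,4)$-type triality) handled by hand.

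\emph{Step 3: the exceptional groups $G_4,\ldots,G_{37}$.} This is a finite check. For each exceptional group I would inspect the character table (e.g.\ via CHEVIE) for irreducible characters $\chi$ satisfying two conditions: $\chi$ is faithful, and $\chi(s)=\chi(1)-1+\lambda$ for every reflection $s$, with $\lambda$ a root of unity of the right order. I would then verify that each such $\chi$ equals $\sigma\circ\chi_V$ for some $\sigma\in\operatorname{Gal}(K_W/\Q)$.

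\emph{Main obstacle.} I expect the main obstacle to be Step 1 made rigorous, that is, pinning down $\dim M=r$ and the type of $W'$ without circularity. If the degree-counting argument resists, I would bypass it entirely and simply run the character-table search of Step 3 over all irreducibles of $W$. For the infinite family this would go through the multipartition formulas of Step 2, which already determine $\dim M$ directly.
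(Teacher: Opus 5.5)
The paper gives no proof of this statement; it is quoted from Marin--Michel and used as a black box, so your proposal has to stand on its own. Its sound core is Steps 2 and 3: classify, type by type, the irreducible characters of $W$ itself that are faithful and satisfy the reflection condition. That is a legitimate, partly computational, proof. Your character test in Step 3 is valid provided ``of the right order'' means $\lambda$ is a primitive $e$th root of unity with $e=\operatorname{ord}(s)$. To see this, apply a Galois automorphism sending $\lambda$ to $\zeta_e$ and compare real parts in $\sum_i(1-\mu_i)=1-\lambda$; only one eigenvalue $\mu_i$ can differ from $1$. Without primitivity the test fails: eigenvalues $i,-i$ also give $\chi(s)=\chi(1)-2$.

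Step 1 should be dropped rather than repaired. Steps 2--3 never use it, and even a complete Step 1 would not finish the proof. Knowing that $\rho(W)\subseteq\mathrm{GL}(M)$ has the same Shephard--Todd type as $W$ only gives $M\cong V\circ\psi$ for some automorphism $\psi$ of $W$ that preserves reflections. Showing that every such $\psi$ changes $\chi_V$ by a Galois automorphism is of the same depth as the statement you are proving; it is essentially Marin--Michel's description of reflection-preserving automorphisms. Likewise, the claim that $\rho$ maps reflections \emph{onto} the reflections of $\rho(W)$ is only available a posteriori.

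In Step 2, the transposition condition alone does not force the shape $((r-1),(1))$; you must use faithfulness explicitly. The restriction to $S_r$ of the $G(m,1,r)$-module indexed by $\boldsymbol\lambda$ is the Littlewood--Richardson product $\prod_j s_{\lambda^{(j)}}$. All $G(m,d,r)$-constituents are $G(m,1,r)$-conjugate, so each satisfies the hypothesis. Each must therefore contribute exactly one nontrivial $S_r$-constituent, chosen from $(r-1,1)$, $(1^r)$, and $(2,2)$ when $r=4$. Besides $\{(r-1),(1)\}$, this condition admits single-component multipartitions, which are linear twists of inflations from $S_r$. It also admits two shift-stable cases in which the restriction splits:
\begin{itemize}
\item three copies of $(1)$ at positions $j,j+m/3,j+2m/3$, when $r=3$ and $3\mid d$;
\item two copies of $(2)$ at positions $j,j+m/2$, when $r=4$ and $2\mid d$ (the $D_4$ triality case).
\end{itemize}
All of these are nonfaithful. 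On the survivors $\delta^i\otimes M_k$, where $\delta$ is the product of the nonzero entries and $k=j-i$, the diagonal reflections force $\delta^i$ to restrict trivially when $r\ge3$; this is automatic when $d=m$. Faithfulness then forces $\gcd(k,m)=1$, giving $V^{\sigma_k}$. You should also include type $A$, where only $(n-1,1)$ survives. With these points added and the CHEVIE check actually run, the argument is complete.
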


The irreducibility hypothesis on the module can be removed by allowing
a fixed summand. We will apply the following consequence to hsop
generators in \cref{a:cor:faithful-hsop}.

\begin{proposition}\label{a:prop:faithful-reflection-module}
Let $W\subseteq\operatorname{GL}(V)$ be a nontrivial irreducible complex
reflection group. If $M$ is a faithful $W$-module on which every
reflection of $W$ acts as a reflection, then
\[
 M\cong M^W\oplus V^\sigma
\]
for some $\sigma\in\operatorname{Gal}(K_W/\Q)$.
\end{proposition}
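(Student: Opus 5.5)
We reduce to the irreducible case covered by \cref{b:thm:MarinMichel}. Since representations of the finite group $W$ are completely reducible, write $M=M^W\oplus M'$, where $M'$ has no trivial summand, and decompose $M'=M_1\oplus\cdots\oplus M_k$ into nontrivial irreducibles. Our goal is to show $k=1$ and that $M_1$ satisfies the hypotheses of \cref{b:thm:MarinMichel}.

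First we show that reflections act on exactly one summand. Let $s$ be a reflection of $W$. By hypothesis $s$ acts on $M$ as a reflection, so its fixed space has codimension one and $s$ acts nontrivially on exactly one $M_j$. There it acts as a reflection; on every other summand and on $M^W$ it acts trivially. So each reflection determines a well-defined index $j(s)$.

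Next we use that $W$ is irreducible to show this index is constant. We claim $j(s)$ is the same for all reflections. Let $R_j$ be the set of reflections with $j(s)=j$. Each $R_j$ is stable under conjugation, because conjugation preserves the isotypic decomposition of $M'$. If two reflections $s,t$ have $j(s)\neq j(t)$, then each acts trivially on the summand the other moves, so $st$ and $ts$ agree on every $M_i$ and on $M^W$. Faithfulness of $M$ then gives $st=ts$. Consequently the sets $R_j$ commute with one another elementwise, and $W$, which is generated by all its reflections, is the commuting product of the subgroups $W_j=\langle R_j\rangle$. Each $W_j$ is a normal reflection subgroup. Since these subgroups commute and together generate $W$, the reflection representation $V$ decomposes as the direct sum of the subspaces $[W_j,V]$ spanned by the images $(w-1)V$ for $w\in W_j$. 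This sum is direct: $W_j$ acts trivially on $[W_i,V]$ for $i\neq j$, because $W_j$ commutes with $W_i$ and the root lines are determined by the fixed hyperplanes. Irreducibility of $V$ therefore forces exactly one $W_j$ to be nontrivial. Hence all reflections act nontrivially only on a single summand $M_{j_0}$.

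Then $W$ acts trivially on every $M_i$ with $i\neq j_0$, because $W$ is generated by reflections. Such an $M_i$ would be a trivial summand, contradicting our choice of $M'$. So $k=1$ and $M'=M_1$ is irreducible. Faithfulness of $M$ transfers to $M_1$, since $W$ acts trivially on $M^W$. Every reflection acts on $M_1$ as a reflection. Applying \cref{b:thm:MarinMichel} to $M_1$ gives $M_1\cong V^\sigma$, which is the claimed decomposition $M\cong M^W\oplus V^\sigma$.

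The main obstacle is the step ``commuting reflection subgroups split $V$''. We expect to handle it most cleanly by using the root line $L_s=\operatorname{im}(s-1)\subseteq V$ and the fixed hyperplane $H_s$ of each reflection $s$. Commuting reflections $s,t$ with distinct hyperplanes satisfy $L_s\subseteq H_t$ and $L_t\subseteq H_s$. Thus $\sum_{s\in R_{j_0}}L_s$ is a nonzero $W$-stable subspace: it is $W_{j_0}$-stable by conjugation invariance, and it is fixed pointwise by the other $W_i$. Irreducibility of $V$ then makes this subspace equal to $V$. Any reflection $t$ in some other $R_i$ would then fix all of $V$, which is impossible since $t\neq1$.
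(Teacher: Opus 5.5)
Your route differs from the paper's and can be completed, but as written it has one hole. Your final paragraph needs a reflection $s\in R_{j_0}$ and a reflection $t\in R_i$ with $i\neq j_0$ to have \emph{distinct} reflecting hyperplanes, because only then does commutation give $L_s\subseteq H_t$. You never prove this, and it is not a formality. In rank one every reflection has hyperplane $\{0\}$ and root line $V$, so your root-line argument gives nothing there. In general, nothing you wrote rules out, say, $s$ and $s^2$ falling into different classes. The justification in your middle paragraph, that $W_j$ commutes with $W_i$, does not supply this either: commuting subgroups that share a hyperplane need not act trivially on each other's root lines.

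The repair is one line. The pointwise stabilizer $W_H$ is cyclic with generator $s_H$, so every reflection with hyperplane $H$ is a power of $s_H$. It therefore acts trivially wherever $s_H$ does, and so has index $j(s_H)$. Equivalently, if $H_s=H_t$ but $j(s)\neq j(t)$, then $st\neq 1$ fixes $H$ pointwise, so it is a reflection of $W$ acting nontrivially on two summands of $M$, a contradiction. With this added, your lemma on commuting reflections (which is correct) makes $\sum_{s\in R_{j_0}}L_s$ a nonzero $W$-stable subspace fixed pointwise by every reflection outside $R_{j_0}$, and the rest of your proof goes through. The direct-sum claim about the $[W_j,V]$ in your middle paragraph then becomes unnecessary. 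One small correction: $R_j$ is conjugation-stable because each $M_j$ is a $W$-submodule, not because of isotypic components, which the $M_j$ need not be.

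For comparison, the paper uses faithfulness to show that the $W_i$ commute and intersect trivially, so $W=W_1\times\cdots\times W_k$. It then writes the irreducible $V$ as an external tensor product $V_1\boxtimes\cdots\boxtimes V_k$ and uses the count $1=\operatorname{rank}(s-1\mid V_i)\prod_{j\neq i}\dim V_j$ to force every $V_i$ to be a line when $k>1$. It rules out this rank-one situation because a product of nonidentity elements from two factors would be a reflection moving two summands of $M$. Your geometric argument avoids the trivial-intersection step and the description of irreducibles of a direct product, and shows directly that all reflections lie in one class. The paper's argument avoids the eigenspace bookkeeping for commuting reflections. The case you omitted is exactly the analogue of the paper's final rank-one step, so neither route escapes it.
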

\begin{proof}
Write $M=M^W\oplus M_1\oplus\cdots\oplus M_k$ with the $M_i$
nontrivial and irreducible. Each reflection acts nontrivially on exactly
one $M_i$. Let $W_i$ be generated by the reflections assigned to $M_i$.
Each $W_i$ acts trivially on $M^W$ and on all $M_j$ with $j\ne i$.
Thus distinct factors commute on $M$, and any element of
$W_i\cap\prod_{j\ne i}W_j$ acts trivially on every summand of $M$.
Faithfulness implies that the $W_i$ commute in $W$ and that these
intersections are trivial. Since the reflections generate $W$, we have
$W=W_1\times\cdots\times W_k$.
Write the irreducible reflection representation as
$V=V_1\boxtimes\cdots\boxtimes V_k$. For a generating reflection
$s\in W_i$,
\[
 1=\operatorname{rank}(s-1\mid V_i)\prod_{j\ne i}\dim V_j.
\]
If $k>1$, these identities force every $V_i$ to be one-dimensional.
But then every nonidentity element of $W$ is a reflection, whereas a
product of nonidentity elements from two factors acts nontrivially on
two summands of $M$. Therefore $k=1$, and \cref{b:thm:MarinMichel}
applies to $M_1$.
\end{proof}

\subsection{Untwisted parking characters}\label{b:sec:ito-okada}

Ito--Okada~\cite[Theorem~1.4]{IO} classify the degrees for which the
untwisted determinant quotient is a graded character, and relate them
to the permutation character $w\mapsto p^{\dim V^w}$.
Their theorem, recalled below, supplies the untwisted implication in
\cref{c:thm:untwisted}. Our proof of \cref{c:thm:main} realizes these
graded characters as quotients by posh hsops of degree $p$.

\begin{theorem}[Ito--Okada~{\cite[Theorem 1.4]{IO}}]\label{b:thm:IO}
Let $W$ be an irreducible finite complex reflection group and $p$ a
positive integer. The class function
\[
 w\longmapsto
 \frac{\det(1-q^p w\mid V^*)}{\det(1-qw\mid V^*)}
\]
is the character of a finite-dimensional graded $W$-module if and only
if both $\Cat^+_{p,V^*}(q)$ and $\Cat^-_{p,V^*}(q)$ from
\cref{c:thm:main} are polynomials. Whenever these conditions hold,
$w\mapsto p^{\dim V^w}$ is a permutation character. For non-dihedral
$W$, the converse also holds.
\end{theorem}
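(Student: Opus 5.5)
The statement in question is \cref{b:thm:IO}. I would split it into three parts: the forward implication (graded character $\Rightarrow$ polynomiality), the reverse implication, and the two permutation-character assertions.

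\textbf{Forward implication.} Write $\chi_p(w;q)$ for the determinant quotient. If it is the character of a graded module, then its multiplicities of the trivial character and of a suitable determinant character are polynomials with nonnegative coefficients. I would compute both multiplicities exactly, as in \cref{cor:formal-products}. Expanding $\det(1-q^pw\mid V^*)=\sum_j(-q^p)^j\chi_{\wedge^jV^*}(w)$ and averaging over $W$ gives
\[
 \langle\chi_p,1\rangle=\sum_j(-q^p)^j\,\Hilb\bigl((S\otimes\textstyle\bigwedge^jV^*)^W;q\bigr).
\]
Apply \cref{b:thm:OS} with $M=V$, which is amenable since $\sum_i(d_i-1)=N$ is the degree of $\det V$ in $C_W$. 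Substituting $u=-q^p$ yields $\prod_i(1-q^{p+d_i-1})/\prod_i(1-q^{d_i})=\Cat^+_{p,V^*}$, because $b_i=e_i(V)=d_i-1$.

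For the determinant multiplicity, I would use the identity
\[
 \det(1-q^pw\mid V^*)=(-q^p)^r\det(w\mid V^*)\det(1-q^{-p}w^{-1}\mid V^*).
\]
This converts the pairing with $\det V^*$ into the same Orlik--Solomon sum, now with $M=V^*$ and exterior variable $-q^{-p}$. Multiplying through gives $\prod_i(q^{a_i}-q^p)/\prod_i(1-q^{d_i})$, with $a_i=e_i(V^*)$. This step needs amenability of $V^*$, namely $\sum_ie_i(V^*)=N^*$, which is the standard Orlik--Solomon fact. Polynomiality of both $\Cat^\pm_{p,V^*}$ follows.

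\textbf{Permutation-character claims.} Letting $q\to1$, each eigenvalue $1$ of $w$ contributes a factor $[p]_1=p$, and every other eigenvalue $\lambda$ contributes $(1-\lambda)/(1-\lambda)=1$. So the ungraded character is $w\mapsto p^{\dim V^w}$. To show this is a permutation character when the polynomiality conditions hold, I would first convert polynomiality into explicit congruence conditions on $p$, using the cyclotomic factors of the $d_i$ (for example $p\equiv1\pmod m$ for non-dihedral $G(m,d,r)$). I would then exhibit a $W$-set with this character in each case. For Weyl-type lattices this is the torus $Q/pQ$ in Sommers' form. For $G(m,d,r)$ it is the set $(\Z/p)^r$ with the monomial action, which is well defined because $p\equiv1\pmod m$. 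For exceptional groups I would use a direct Burnside/table-of-marks check. For non-dihedral $W$, the converse would go by showing that whenever $p$ fails the congruences, some cyclic subgroup makes the Burnside count $\frac1{|H|}\sum_{h\in H}p^{\dim V^h}$ non-integral.

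\textbf{Reverse implication.} This is the main obstacle: from polynomiality of $\Cat^\pm_{p,V^*}$ alone one must deduce that every irreducible multiplicity $\langle\chi_p,\chi\rangle$ is a polynomial with nonnegative coefficients. My first attempt would be conceptual: show that polynomiality forces $p$ into residue classes in which a posh untwisted hsop is known to exist (Haiman, Gordon--Griffeth at $\gcd(p,h)=1$, and compositions), and then invoke \cref{prop:koszul}. Where no such construction is available, I would fall back on Ito--Okada's route. For $G(m,d,r)$ this means explicit character formulas over multipartitions with a direct positivity argument. For the exceptional groups it means a finite computation, since the multiplicities depend on $p$ only through its residue modulo $\operatorname{lcm}(d_i)$ together with a linear shift.
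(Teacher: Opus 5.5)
\noindent Your forward implication is correct and essentially reproduces the paper's \cref{cor:formal-products}: the trivial multiplicity comes from \cref{b:thm:OS} with $M=V$, the $\det V^*$ multiplicity from your reciprocal identity with $M=V^*$, and $q\to1$ gives $p^{\dim V^w}$. The paper does not reprove this theorem, however. Its proof cites Ito--Okada and adds a convention check that you omit. They work with $V$; replacing $w$ by $w^{-1}$ turns their quotient into this one and dualizes each graded piece, so the graded-character property transfers. Parts (B) and (C) of their theorem then give the permutation assertions. As an independent proof, your proposal has a genuine gap in the reverse implication. The constructions you call known (Haiman, \cref{c:thm:GG}, and their composites) reach only degrees prime to $h$ in well-generated groups, because a product of integers prime to $h$ is prime to $h$. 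They therefore miss untwisted degrees such as $p=3$ for $G_4$, $p=5$ for $H_3$, $p=9$ for $G_6$, $p=25$ for $G_{27}$, and odd $p$ with $\gcd(p,h)>1$ in types $B$ and $D$. They also miss every non-well-generated exceptional group, where \cref{c:thm:GG} does not apply. Your fallback rests on a false reduction. Only polynomiality (cyclotomic divisibility) is periodic in $p$ modulo $\operatorname{lcm}(d_i)$. The multiplicities are not related by a grading shift: a shift preserves dimensions, whereas $\Cat^+_{p,V^*}(1)=\prod_i(p+d_i-1)/|W|$ grows with $p$. So nonnegativity for the infinitely many $p$ in a residue class is not a finite computation as you state it.

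\noindent Closing this gap needs a genuinely new input. One option is the hsop constructions of \cref{sec:proof-algebraic}, followed by \cref{prop:koszul}. These are coordinate powers (\cref{a:prop:monomial}) for all $G(m,d,r)$, the seeds of \cref{c:lem:seeds} combined with \cref{c:lem:composition}, and \cref{c:lem:scalar,c:lem:eigenspace}. The paper's type-$A$ step cites this very theorem, so you would replace it by the direct count of $\Phi_\delta$, $\delta\mid n$, in $\Cat^+_{p,V^*}$; that count forces $\gcd(p,n)=1$. For the well-generated exceptional groups there is also a purely character-level alternative. Write $\chi_{p,\sigma}(w;q)\defeq\det(1-q^pw\mid(V^*)^\sigma)/\det(1-qw\mid V^*)$. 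Then $\chi_{ab,\sigma\tau}(w;q)=\chi_{b,\tau}^{\sigma}(w;q^a)\,\chi_{a,\sigma}(w;q)$, where the superscript applies $\sigma$ to character values. This reduces positivity to Gordon--Griffeth plus finitely many twisted seed degrees. Without such an input, ``Ito--Okada's route'' is just the citation the paper makes. Your permutation part is likewise only a sketch, and it contains an error. The set $(\Z/p\Z)^r$ with a monomial action is not well defined for composite $p\equiv1\pmod m$: for $m=4$, $p=21$, the group $(\Z/21\Z)^\times\cong C_2\times C_6$ has no element of order $4$. Instead, let $G(m,1,r)$ act on $X^r$, where $X$ is a $\mu_m$-set with one fixed point and $(p-1)/m$ free orbits. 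You also omit the class $p\equiv-1\pmod m$ for $G(m,m,2)$, and you assert rather than check the cyclic Burnside-count converse.
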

\begin{proof}
Ito--Okada~\cite[Theorem 1.4(A)]{IO} use $V$ in their determinant
quotient. Replacing $w$ by $w^{-1}$ gives our quotient and dualizes
each homogeneous representation. Specializing at $q=1$ gives
$w\mapsto p^{\dim V^w}$. Parts (B) and (C) of their theorem imply
that whenever this is a character, it is a permutation character.
Part (B) gives the converse outside dihedral type.
\end{proof}

\subsection{Earlier hsop constructions}\label{c:sec:cherednik}

We review the Gordon--Griffeth construction supplying the well-generated
coprime cases, and the composition that extends exceptional
seed maps to further degrees. As an application of this composition,
we review Haiman's construction in type $A$ in \cref{c:ex:haiman}.

\begin{theorem}[{Gordon--Griffeth~\cite[Theorem~2.11 and Section~2.18]{GG}}]
\label{c:thm:GG}
Let $W$ be irreducible and well-generated, with Coxeter number $h=d_r$,
and let $p>0$ be relatively prime to $h$. Then
$(p,(V^*)^{\sigma_p})$ is posh, where $\sigma_p(\zeta_h)=\zeta_h^p$.
\end{theorem}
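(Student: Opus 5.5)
Since \cref{c:thm:GG} is imported from Gordon--Griffeth, I only sketch how I would reprove it, using the rational Cherednik algebra $H_c(W)$ at the equal parameter $c=p/h$. Dunkl operators act on $S$, and this makes $S$ the standard module $\Delta_c(\mathrm{triv})$ of category $\mathcal O$. A subspace of $S_p$ killed by all Dunkl operators consists of \emph{singular polynomials}. The ideal generated by such a $W$-stable subspace is an $H_c$-submodule of $S$, because Dunkl operators obey a Leibniz-type rule. The plan is to locate a space $\Theta$ of singular polynomials in $S_p$ carrying $(V^*)^{\sigma_p}$. I would then show that $S/(\Theta)$ is finite-dimensional by a support argument in category $\mathcal O$. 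Faithfulness is then automatic, since a Galois twist of $V^*$ is faithful.

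\textbf{Step 1 (singular vectors in degree $p$).} The Euler element acts on the lowest-weight space of $\Delta_c(\tau)$ by a scalar $c_\tau$. A submodule can begin in degree $p$ with lowest weight $U$ only if $p=c_{\mathrm{triv}}-c_{U}$. For $U\cong(V^*)^{\sigma_p}$, this scalar is computed from the sum of $1-\chi_U(s)/\chi_U(1)$ over reflections $s$. Because every reflection acts on $U$ as a reflection, it equals $c$ times the twisted sum $\sum_s(1-\sigma_p(\lambda_s))$. For $c=p/h$ this should evaluate exactly to $p$, using $N+N^*=rh$ and the Galois symmetry of eigenvalues of reflections. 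This shows that degree $p$ is the only candidate. For existence I would use the KZ functor and the Hecke algebra at $q=e^{2\pi i p/h}$. The Galois automorphism $\sigma_p$ relates the specialization at $p/h$ to the one at $1/h$, where the kernel of $\Delta_{1/h}(\mathrm{triv})\to L_{1/h}(\mathrm{triv})$ is generated in degree one by the copy $V^*\subseteq S_1$. This is Opdam's KZ twist. Transporting along this symmetry, the maximal proper submodule $J$ of $\Delta_c(\mathrm{triv})$ has lowest-degree component $\Theta\subseteq S_p$ isomorphic to $(V^*)^{\sigma_p}$, and $\Theta$ consists of singular vectors.

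\textbf{Step 2 (common zero is the origin).} This is the main obstacle. Set $M\defeq S/(\Theta)$, which is a module in $\mathcal O$. Its support in $V$ is a closed, $W$-stable, conic union of strata. These strata are indexed by parabolic subgroups $W'$, and the stratum of $W'$ is the set of points whose stabilizer is exactly $W'$. If $M$ were supported on the stratum of some $W'\neq1$ other than the origin, then Bezrukavnikov--Etingof restriction would produce a nonzero finite-dimensional $H_c(W')$-module. For an equal parameter $c=p/h$ with $\gcd(p,h)=1$, this would force $h$ to divide some invariant degree of $W'$. I would deduce this from the Etingof/Rouquier criterion: a finite-dimensional module forces a pole of the Hecke algebra's Schur elements. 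But every degree of a proper parabolic of a well-generated $W$ is strictly smaller than $h=d_r$, so no such module exists. The support is therefore $\{0\}$, and $\Theta$ is an hsop.

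\textbf{Step 3 (consistency check).} As a check, \cref{prop:koszul} gives $\dim M=p^r$. The surjection $M\twoheadrightarrow L_c(\mathrm{triv})$ should be an isomorphism, by comparing against the BEG character formula for $L_{p/h}(\mathrm{triv})$. This comparison is not needed for poshness. It does confirm the identification of the twist as $\sigma_p$ with $\sigma_p(\zeta_h)=\zeta_h^p$, since the graded character of $M$ is then the determinant quotient of \cref{prop:koszul} for $\Theta\cong(V^*)^{\sigma_p}$. The delicate points are the Galois bookkeeping in Step 1 and the parabolic-degree argument in Step 2. I expect Step 2 to need the most care for non-real well-generated groups.
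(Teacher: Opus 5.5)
Your outline is correct. Step~1 is the same as the paper's construction, but the decisive finite-dimensionality step takes a different route.

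\textbf{What matches.} As in the paper, the Gordon--Griffeth equivalence $\mathcal O_{1/h}\simeq\mathcal O_{p/h}$ sends $\Delta(\mathbf 1)\mapsto\Delta(\mathbf 1)$ and $\Delta(V^*)\mapsto\Delta((V^*)^{\sigma_p})$. This equivalence is built from the KZ functor and the Galois action on Hecke parameters. It needs BMR freeness; the paper cites this explicitly, and your KZ arguments need it too. Your Euler computation then places the generators in $S_p$. Two small corrections: $c_{\mathbf 1}-c_U=\tfrac{c}{r}\sum_s(1-\mu_s)=ch$, so your sum is missing the factor $1/r$. Also, this identity holds for every twist $U$, so the twist $\sigma_p$ is supplied only by the equivalence.

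\textbf{The paper's route.} The paper transports the \emph{entire} Koszul resolution of $L_{1/h}(\mathbf 1)=\C$. Thus $L=S/(\Theta)$ is resolved by the modules $\Delta(\wedge^j(V^*)^{\sigma_p})$. Standard modules have known graded characters, so this resolution gives the determinant quotient of \cref{prop:koszul} as the character of $L$. Hence $\dim L=p^r$, and the hsop property follows with no geometric input.

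\textbf{Your route.} You transport only the first map and then show the support is $\{0\}$ by Bezrukavnikov--Etingof restriction. This proves more. Every module in $\mathcal O_{p/h}(W)$ has support $\{0\}$ or $V$. Consequently \emph{any} nonzero $W$-stable space of singular polynomials cuts out only the origin; full support is excluded because $S$ is a domain.

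\textbf{What your route costs.}
First, you need the GGOR/Rouquier fact that $\mathcal O_c(W')$ is semisimple whenever the Hecke algebra of $W'$ is.
Second, you need that a Schur element \emph{vanishing} at a primitive $h$th root of unity $x$ forces $\prod_i[d'_i]_x=0$. Note that the relevant condition is vanishing, not a pole as you wrote. For real $W'$ this follows from polynomiality of generic degrees. For non-real parabolics, observe that the equal-parameter specialization is, after rescaling the generators, the spetsial Hecke algebra; then use that the relevant parabolics are spetsial. Alternatively, check directly: for instance, $C_e$ has finite-dimensional modules at equal parameter $c$ only when $ce\in\Z$.
Third, you need that every degree of a proper parabolic of a well-generated group is less than $h$.
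This is exactly the delicate point you flagged, and the paper's route avoids it entirely.
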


\begin{proof}[Construction]
BMR freeness in characteristic zero makes the generic Hecke algebra
of $W$ free of rank $|W|$ over its parameter ring, supplying the
freeness hypothesis in Gordon--Griffeth's construction. See
Etingof~\cite[Theorem~1.3]{Etingof} for the deduction from results of
Marin~\cite{MarinCubic,MarinFreeness} and
Marin--Pfeiffer~\cite[Theorem~1.2]{MarinPfeiffer}.

In the sign convention of Gordon--Griffeth~\cite[Sections~2.9--2.11]{GG},
the polynomial representation of the rational Cherednik algebra at
parameter $-1/h$ has the trivial module as a quotient, resolved by
its coordinate functions. Their equivalence of categories transports
this resolution to parameter $-p/h$. Writing $\Delta(M)$ for the
standard module, whose underlying $S$-module is $S\otimes M$, the last
part is
\[
 \Delta((V^*)^{\sigma_p})\longrightarrow
 \Delta(\mathbf1)=S\longrightarrow L\longrightarrow0.
\]
The singular generating copy of $(V^*)^{\sigma_p}$ maps into $S_p$
and generates the kernel. Since $L$ is finite-dimensional, its image
is an hsop. Thus the construction supplies a parameter space inside
$S$ without specifying polynomial coordinates.
\end{proof}

\begin{lemma}\label{c:lem:composition}
If $(a,(V^*)^\sigma)$ and $(b,(V^*)^\tau)$ are posh, then so is
$(ab,(V^*)^{\sigma\tau})$.
\end{lemma}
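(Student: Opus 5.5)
We prove the lemma by composing the equivariant polynomial maps attached to the two hsops. Through the evaluation map of \cref{b:sec:quotient-character}, a posh hsop $\Theta\subseteq S_a$ carrying $(V^*)^\sigma$ is the same thing as a $W$-equivariant homogeneous polynomial map $F\colon V\to\Theta^*\cong V^\sigma$ of degree $a$ whose zero fibre is the origin. Indeed, choose an isomorphism $\Theta\cong(V^*)^\sigma$ and dual bases; the coordinates of $F$ are then a basis of $\Theta$. Similarly, the second pair gives an equivariant map $G\colon V\to V^\tau$ of degree $b$ with $G^{-1}(0)=\{0\}$.

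The key step is to twist $G$ by $\sigma$. Extend $\sigma$ to an automorphism of $\C$ as in \cref{b:sec:galois}, and apply it to the coefficients of $G$ written in a basis where $W$ acts on $V$ by matrices over $\Q(\zeta_{\mathfrak f_W})$. This gives a polynomial map $G^\sigma\colon V^\sigma\to(V^\tau)^\sigma=V^{\sigma\tau}$. It is equivariant, because the identity $G(\rho_V(w)x)=\rho_{V^\tau}(w)G(x)$ is a polynomial identity with coefficients in $\C$, and applying $\sigma$ to all coefficients turns $\rho_V$ into $\rho_{V^\sigma}$ and $\rho_{V^\tau}$ into $\rho_{V^{\sigma\tau}}$. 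It is homogeneous of degree $b$. Its zero fibre is still the origin: the condition $G^{-1}(0)=\{0\}$ says the coordinate ideal of $G$ has radical equal to the irrelevant ideal. Equivalently, some power of each variable lies in that ideal, and this is an algebraic identity preserved by applying $\sigma$ to coefficients.

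Next set $H\defeq G^\sigma\circ F\colon V\to V^{\sigma\tau}$. It is equivariant and homogeneous of degree $ab$. If $H(v)=0$, then $G^\sigma(F(v))=0$, so $F(v)=0$, and hence $v=0$. The coordinates of $H$ therefore span a $W$-stable subspace $\Theta'\subseteq S_{ab}$ whose common zero is the origin.

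It remains to check that $\Theta'$ has dimension $r$ and carries $(V^{\sigma\tau})^*=(V^*)^{\sigma\tau}$. The coordinate map $(V^{\sigma\tau})^*\to S_{ab}$, $\lambda\mapsto\lambda\circ H$, is $W$-equivariant, so its image $\Theta'$ is a quotient of $(V^*)^{\sigma\tau}$. The image must have dimension $r$: if fewer than $r$ independent forms spanned it, their common zero locus in $V$ would have positive dimension by Krull's principal ideal theorem, contradicting $H^{-1}(0)=\{0\}$. So the map is injective, and $\Theta'\cong(V^*)^{\sigma\tau}$, which is faithful because $V$ is. This makes $\Theta'$ posh, so $(ab,(V^*)^{\sigma\tau})$ is posh; consistently with \cref{a:cor:faithful-hsop}, it carries a Galois twist.

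The main obstacle is the twisting step. We must make sure $G^\sigma$ is equivariant for the correct twisted actions, and that the twist composes in the order $\sigma\tau$; since $\operatorname{Gal}(K_W/\Q)$ is abelian, the order is harmless. We must also make sure that the hsop property survives applying a possibly discontinuous field automorphism. Both points reduce to the fact that all the relevant conditions are polynomial identities over $\C$, and these are preserved under any automorphism of $\C$.
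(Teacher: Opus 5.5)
Your proof is correct and uses the same argument as the paper: twist the coefficients of $G$ by $\sigma$ to get $G^\sigma\colon V^\sigma\to V^{\sigma\tau}$, then take the composite $G^\sigma\circ F$, whose zero fibre is $F^{-1}(0)=\{0\}$. You also justify two points the paper leaves implicit: that $G^\sigma$ still vanishes only at the origin (your radical argument), and that the $r$ coordinates of the composite are linearly independent, so their span really carries $(V^*)^{\sigma\tau}$.
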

\begin{proof}
Choose corresponding hsop maps $F:V\to V^\sigma$ and $G:V\to V^\tau$.
Applying $\sigma$ to the coefficients and covariance equations of $G$
gives $G^\sigma:V^\sigma\to V^{\sigma\tau}$. The composite
$G^\sigma\circ F$ is equivariant and homogeneous of degree $ab$.
Its zero fibre is $F^{-1}((G^\sigma)^{-1}(0))=F^{-1}(0)=\{0\}$.
Galois actions on the character field are abelian, so its representation
type depends only on the two twist classes.
\end{proof}

\begin{definition}\label{b:def:seed}
A \defn{seed hsop} is a posh hsop used with~\cref{c:lem:composition}
to construct posh hsops in further degrees. We will give exceptional seed
hsops in \cref{c:sec:seeds} and use them in \cref{c:sec:noncoprime}
to obtain the remaining posh pairs for
$G_4,G_6,H_3,G_{27}$.
\end{definition}

\begin{example}\label{c:ex:haiman}
Let $W=S_n$ and $V=\{x\in\C^n:\sum_i x_i=0\}$.
For a prime $\ell\nmid n$, Haiman~\cite[Proposition~2.5.4]{Haiman}
uses the projected power map
\begin{equation}\label{c:eq:haiman-projection}
 H_\ell(x)_i\defeq x_i^\ell-\frac1n\sum_jx_j^\ell.
\end{equation}
It is homogeneous, $S_n$-equivariant, and takes values in $V$.
If $H_\ell(x)=0$, all $x_i^\ell$ agree. A nonzero zero can therefore
be rescaled so that every $x_i$ is an $\ell$th root of unity.
Writing $x_i=\zeta_\ell^{a_i}$ gives
$\sum_i\zeta_\ell^{a_i}=0$. The polynomial
$\sum_i t^{a_i}$ is divisible by $1+t+\cdots+t^{\ell-1}$;
evaluation at $t=1$ forces $\ell\mid n$, a contradiction.
Factoring $p$ into primes and composing these maps proves that
$(p,V^*)$ is posh whenever $\gcd(p,n)=1$.
\end{example}

\section{Generator representations and parking characters}\label{sec:generators}

For irreducible $W$, we identify the generator modules of posh hsops
as Galois twists of $V^*$ in~\cref{a:cor:faithful-hsop} and compute
both Catalan multiplicities in~\cref{a:thm:automatic}.
We then determine which parking spaces are permutation modules
in~\cref{c:thm:untwisted}. These restrictions are used to classify
posh pairs in~\cref{sec:proof-algebraic}.

\subsection{Generator restriction and multiplicities}\label{sec:reflection-restrictions}

We first give a local criterion for applying the exterior-invariant
theorem of~\cref{b:thm:OS}. The hsop condition will force this criterion to
hold, allowing us to compute both multiplicities and identify the
faithful generator modules.

For a reflecting hyperplane $H$, let
$W_H\defeq\{w\in W:w|_H=1\}$ be its cyclic pointwise stabilizer,
and put $e_H\defeq|W_H|$. Choose its generator $s_H$ so
that its eigenvalue on $V/H$ is $\zeta_H\defeq\exp(2\pi i/e_H)$.
The local exponent formula of E.~A.~Gutkin and E.~M.~Opdam~\cite[Lemma 2.1]{Opdam}
and V.~Beck's criterion~\cite[Definition 4(ii) and Remark 1(i)]{Beck}
give the following characterization.

\begin{proposition}[{\cite[Lemma 2.1]{Opdam}, \cite[Remark 1(i)]{Beck}}]
\label{a:prop:two-sided-amenability}
For a finite-dimensional $W$-module $M$, the following are equivalent:
\begin{enumerate}
\item both $M$ and $M^*$ are amenable;
\item every reflection $s$ satisfies $\operatorname{rank}(1-s\mid M)\leq1$;
\item every reflection acts on $M$ as a reflection or the identity.
\end{enumerate}
\end{proposition}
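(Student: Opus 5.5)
The equivalence $(2)\Leftrightarrow(3)$ is elementary and I will dispose of it first. A reflection $s$ has finite order, so it acts diagonalizably on $M$. Hence $\operatorname{rank}(1-s\mid M)\le1$ says exactly that the fixed space of $s$ on $M$ has codimension at most one. That is, $s$ acts on $M$ either as the identity or as a (finite-order) reflection. The real content is $(1)\Leftrightarrow(2)$, which I plan to reduce to a hyperplane-by-hyperplane inequality using the Gutkin--Opdam local exponent formula~\cite[Lemma 2.1]{Opdam}.

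Fix a reflecting hyperplane $H$, with $W_H=\langle s_H\rangle$ cyclic of order $e_H$. For $0\le j<e_H$, let $m_{H,j}(M)$ be the multiplicity of the eigenvalue $\zeta_H^{-j}$ of $s_H$ on $M$ (I will fix the sign convention to match the action on $S$). Set $L_H(M)\defeq\sum_j j\,m_{H,j}(M)$. In this form the Gutkin--Opdam formula reads
\[
 \sum_i e_i(M)=\sum_H L_H(M).
\]
Apply it to the one-dimensional module $\det M$. There $s_H$ acts by $\zeta_H^{-c_H}$, where $c_H\equiv L_H(M)\pmod{e_H}$ and $0\le c_H<e_H$. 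Hence $e_1(\det M)=\sum_H c_H$. Since $L_H(M)\ge c_H$ for every $H$, the module $M$ is amenable if and only if $L_H(M)<e_H$ for every $H$.

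Dualizing replaces the eigenvalue $\zeta_H^{-j}$ by $\zeta_H^{j}$. So for $j\neq0$ we get $m_{H,e_H-j}(M^*)=m_{H,j}(M)$, and therefore $L_H(M^*)=\sum_{j\ne0}(e_H-j)\,m_{H,j}(M)$. Adding gives
\[
 L_H(M)+L_H(M^*)=e_H\cdot\operatorname{rank}(1-s_H\mid M).
\]
Suppose both $M$ and $M^*$ are amenable. Then the left side is $<2e_H$, which forces $\operatorname{rank}(1-s_H\mid M)\le1$. Conversely, if this rank is at most one, then at most one $j\ne0$ has $m_{H,j}(M)=1$ and the rest vanish. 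In that case $L_H(M)=j<e_H$ and $L_H(M^*)=e_H-j<e_H$, so both are amenable. It remains to pass from the generators $s_H$ to all reflections. Every reflection lies in some $W_H$, so it equals $s_H^k$ for some $k$. Moreover $\operatorname{rank}(1-s_H^k\mid M)\le\operatorname{rank}(1-s_H\mid M)$, because $\ker(1-s_H)\subseteq\ker(1-s_H^k)$. Hence condition (2) for all reflections is equivalent to condition (2) for the $s_H$.

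The main obstacle I expect is bookkeeping rather than ideas. I need to pin down the precise statement and sign convention of the Gutkin--Opdam formula: whether one uses $M$ or $M^*$, whether the exponent is $j$ or $e_H-j$, and that it is stated for the coinvariant algebra of $S=\Sym(V^*)$. I also need to confirm that the same formula computes $e_1(\det M)$ as $\sum_H c_H$ with $c_H<e_H$. Once the convention is fixed consistently for $M$, $M^*$ and $\det M$, the argument is symmetric in $M\leftrightarrow M^*$. This agrees with Beck's criterion~\cite[Remark 1(i)]{Beck}, which I would cite as a cross-check.
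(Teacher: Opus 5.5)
Your proposal is correct and is essentially the paper's own proof. Both use the Gutkin--Opdam sum formula, apply its one-dimensional case to $\det M$ to show that $M$ is amenable iff $L_H(M)<e_H$ for every $H$, and then use the identity $L_H(M)+L_H(M^*)=e_H\operatorname{rank}(1-s_H\mid M)$. Your explicit passage from the generators $s_H$ to all reflections fills in a step the paper leaves to Beck's criterion; the only point the paper states that you leave implicit is that the formula extends from irreducible to arbitrary $M$ by additivity.
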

\begin{proof}
Write
\[
 m_{H,j}\defeq\dim\ker(s_H-\zeta_H^{-j}I\mid M),
 \qquad 0\leq j<e_H,
\]
so $m_{H,j}$ is the multiplicity of the indicated eigenvalue, and put
\[
 J_H(M)\defeq\sum_{j=1}^{e_H-1}j m_{H,j},\qquad
 a_H(M)\defeq\sum_{j=1}^{e_H-1}m_{H,j}.
\]
Let $\overline J_H(M)$ be the least nonnegative residue of $J_H(M)$
modulo $e_H$. The sum formula of Gutkin--Opdam~\cite[Lemma 2.1]{Opdam},
\[
 \sum_i e_i(M)=\sum_H J_H(M),
\]
extends from irreducible modules to arbitrary $M$ by additivity.
Applying its one-dimensional case to $\det M$ gives
$e_1(\det M)=\sum_H\overline J_H(M)$, and therefore
\[
 \sum_i e_i(M)-e_1(\det M)
 =\sum_H\bigl(J_H(M)-\overline J_H(M)\bigr).
\]
Every summand on the right is nonnegative, so $M$ is
amenable exactly when $J_H(M)<e_H$ for every $H$. Also
\[
 J_H(M^*)=e_Ha_H(M)-J_H(M).
\]
Consequently both modules are amenable exactly when $a_H(M)\leq1$
for every $H$: if both local sums are less than $e_H$, their sum
$e_Ha_H(M)$ is less than $2e_H$; conversely, at most one nontrivial
eigenvalue makes both sums less than $e_H$. This is Beck's local
criterion~\cite[Remark 1(i)]{Beck}, and is equivalent to (2) and (3).
\end{proof}

Multiplication sends basic covariants to generators of
$(\Theta)\cap S^W$, explaining the numerator degrees in the invariant
Hilbert series. These invariant parameters also provide examples
in~\cref{sec:questions}.

\begin{proposition}\label{b:prop:invariant-parameters}
Let $\Theta\subseteq S_p$ be a $W$-stable hsop. The ideal
$(\Theta)\cap S^W$ is generated by a regular sequence of degrees
$p+e_i(\Theta^*)$, so
\begin{equation}\label{b:eq:invariant-parameter-hilbert}
 \Hilb(A_\Theta^W;q)
 =\prod_{i=1}^r\frac{[p+e_i(\Theta^*)]_q}{[d_i]_q}.
\end{equation}
\end{proposition}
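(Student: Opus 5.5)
The plan is to identify $(\Theta)\cap S^W$ with the image of a free module of covariants under multiplication. Then a dimension count shows that the resulting $r$ generators form a system of parameters in the polynomial ring $S^W$.

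\emph{Step 1: invariants of the ideal come from covariants.} Fix a basis $\theta_1,\ldots,\theta_r$ of $\Theta$. Consider the multiplication map
\[
 \mu\colon S\otimes\Theta\to S,\qquad f\otimes\theta\mapsto f\theta .
\]
It is $W$-equivariant and homogeneous of degree $p$, once $\Theta$ is placed in degree $p$. Its image is the ideal $(\Theta)$. Since we are in characteristic zero, the Reynolds operator $R=\frac1{|W|}\sum_w w$ commutes with $\mu$. Hence $\mu$ restricts to a surjection
\[
 (S\otimes\Theta)^W\twoheadrightarrow(\Theta)^W=(\Theta)\cap S^W.
\]
Concretely, if $f=\sum_j\theta_jh_j$ is invariant, then $f=R(f)=\mu\bigl(R(\sum_j h_j\otimes\theta_j)\bigr)$.

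\emph{Step 2: the covariant module is free.} We have $(S\otimes\Theta)^W\cong\Hom_W(\Theta^*,S)$. Applying $\Hom_W(\Theta^*,-)$ to the decomposition~\cref{b:eq:coinvariant-decomposition} shows that this is a free $S^W$-module of rank $\dim\Theta=r$. Its homogeneous basis elements $\psi_1,\ldots,\psi_r$ have polynomial degrees $e_i(\Theta^*)$, as in \cref{prop:hsop-multiplicity}. Since $\mu$ is $S^W$-linear, Step 1 shows that the ideal $I\defeq(\Theta)\cap S^W$ of $S^W$ is generated by the $r$ invariants $\phi_i\defeq\mu(\psi_i)$, of degrees $p+e_i(\Theta^*)$.

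\emph{Step 3: regularity.} Exactness of $W$-invariants in characteristic zero gives
\[
 A_\Theta^W=(S/(\Theta))^W=S^W/I.
\]
This is finite-dimensional because $A_\Theta$ is. Now $S^W=\C[f_1,\ldots,f_r]$ is a graded polynomial ring of Krull dimension $r$, and $I$ is generated by the $r$ homogeneous elements $\phi_1,\ldots,\phi_r$ with finite-dimensional quotient. So the $\phi_i$ form a homogeneous system of parameters of $S^W$, and in particular none of them is zero. Because $S^W$ is Cohen--Macaulay, they form a regular sequence, by the same Stacks lemmas invoked before \cref{prop:koszul}.

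\emph{Step 4: Hilbert series.} The Koszul resolution of $S^W/I$ over $S^W$ gives
\[
 \Hilb(A_\Theta^W;q)=\frac{\prod_i\bigl(1-q^{p+e_i(\Theta^*)}\bigr)}{\prod_i\bigl(1-q^{d_i}\bigr)}.
\]
Dividing numerator and denominator by $(1-q)^r$ yields~\cref{b:eq:invariant-parameter-hilbert}.

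The main point needing care is Step 1: we must check that the invariant part of the ideal is exactly the image of the invariant covariants, and not merely that it contains them. The Reynolds argument handles this. After that, the only subtlety is the dual in the degree bookkeeping: covariants of type $\Theta$ in $S\otimes\Theta$ correspond to copies of $\Theta^*$ in $S$, which is why the degrees are $e_i(\Theta^*)$ rather than $e_i(\Theta)$.
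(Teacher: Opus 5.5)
Your proof is correct and follows essentially the same route as the paper. Both arguments present $A_\Theta$ via the multiplication map $S\otimes\Theta\to S$ and take invariants (you use the Reynolds operator, the paper cites exactness of invariants, which is the same fact), then use the coinvariant decomposition to see that $(S\otimes\Theta)^W$ is free of rank $r$ with generators in degrees $p+e_i(\Theta^*)$, and conclude that their images form a system of parameters, hence a regular sequence, in the polynomial ring $S^W$; your remark that no $\phi_i$ can vanish is a small detail the paper leaves implicit.
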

\begin{proof}
Multiplication gives the $W$-equivariant presentation
\[
 S\otimes\Theta\xrightarrow{\ \mu\ }S
 \longrightarrow A_\Theta\longrightarrow0,
 \qquad \mu(f\otimes\theta)=f\theta.
\]
Taking $W$-invariants is exact, so
\[
 (S\otimes\Theta)^W\xrightarrow{\ \mu\ }S^W
 \longrightarrow A_\Theta^W\longrightarrow0.
\]
By~\cref{b:eq:coinvariant-decomposition}, $(S\otimes\Theta)^W$
is a free $S^W$-module of rank $r$. Its homogeneous basis
$c_1,\ldots,c_r$ has total degrees $p+e_i(\Theta^*)$: the polynomial
coefficients have degrees $e_i(\Theta^*)$, and $\Theta$ has degree $p$.
Such invariant tensors are called \defn{covariants}; equivalently,
their polynomial coefficients define equivariant maps $V\to\Theta$.
Set
\[
 g_i\defeq\mu(c_i),\qquad \deg g_i=p+e_i(\Theta^*).
\]
The image of $\mu$ is $(\Theta)\cap S^W$, so
\[
 A_\Theta^W\cong S^W/(g_1,\ldots,g_r).
\]
This quotient is finite-dimensional. Thus $g_1,\ldots,g_r$ form a
system of parameters in the polynomial ring $S^W$ and therefore a
regular sequence~\cite[Lemmas~10.104.2 and~10.104.7]{Stacks}.
Their degrees give the stated Hilbert series.
\end{proof}

\begin{example}\label{ex:invariant-parameters}
For an untwisted hsop, choose a $W$-isomorphism $V^*\cong\Theta$
taking a coordinate basis $x_1,\ldots,x_r$ to
$\theta_1,\ldots,\theta_r$. The theorem of Solomon~\cite[Theorem~2.3]{AW}
on invariant differential forms gives the covariant basis
$df_1,\ldots,df_r$ for homogeneous basic invariants $f_i$.
The invariant parameters are
\[
 g_i=\sum_{j=1}^r\frac{\partial f_i}{\partial x_j}\theta_j,
 \qquad \deg g_i=p+d_i-1.
\]
For well-generated $W$ and $p=mh+1$ with $m\ge0$ an integer,
the Hilbert series becomes the
Fuss $q$-Catalan product $\prod_i[mh+d_i]_q/[d_i]_q$.
\end{example}

If $\Theta$ carries $(V^*)^\sigma$, then
$e_i(\Theta^*)=e_i(V^\sigma)$. In this case the Hilbert series is
the formula of Bessis--Reiner~\cite[Proposition~4.2]{BR}.

We now show that the hsop condition forces the local reflection
criterion in~\cref{a:prop:two-sided-amenability}, and compute the
determinant multiplicity alongside the invariant Hilbert series.

\begin{theorem}\label{a:thm:automatic}\label{c:prop:hilbert}\label{cor:products}
Let $W\subseteq\mathrm{GL}(V)$ be a finite complex reflection group of
rank $r$, and let $\Theta\subseteq S_p$, $p>0$, be a $W$-stable hsop.
Every reflection acts on $\Theta$ as a reflection or the
identity, and both $\Theta$ and $\Theta^*$ are amenable. Moreover,
\begin{align}
 \Hilb(A_\Theta^W;q)
 &=\prod_{i=1}^r\frac{1-q^{p+e_i(\Theta^*)}}{1-q^{d_i}},\label{a:eq:product}\\
 \Hilb\bigl(\Hom_W(\det\Theta,A_\Theta);q\bigr)
 &=\prod_{i=1}^r\frac{q^{e_i(\Theta)}-q^p}{1-q^{d_i}}.\label{c:eq:det-hilbert}
\end{align}
Both products are polynomials with nonnegative integer coefficients. At
$q=1$, the first gives
$\dim A_\Theta^W=|W|^{-1}\prod_{i=1}^r(p+e_i(\Theta^*))$.
\end{theorem}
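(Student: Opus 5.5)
The plan is to deduce everything from the Koszul resolution of \cref{prop:koszul}, together with \cref{b:prop:invariant-parameters} for the invariants and \cref{b:thm:OS} for the determinant isotypic part.

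\textbf{Reflection condition.} Since $A_\Theta$ is a finite-dimensional graded $W$-module, $\chi_{A_\Theta}(w;q)$ is a polynomial in $q$ for every $w$. Take a reflection $s$ with nontrivial eigenvalue $\zeta$ on $V^*$, and let $\lambda_1,\ldots,\lambda_r$ be its eigenvalues on $\Theta$. Then
\[
 \chi_{A_\Theta}(s;q)=\frac{\prod_{j=1}^r(1-q^p\lambda_j)}{(1-q)^{r-1}(1-\zeta q)}.
\]
Compare the order of vanishing at $q=1$. The denominator vanishes to order exactly $r-1$, since $\zeta\neq1$. A factor $1-q^p\lambda_j$ vanishes at $q=1$ only when $\lambda_j=1$, and then it vanishes simply. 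Polynomiality therefore forces at least $r-1$ of the $\lambda_j$ to equal $1$, that is, $\operatorname{rank}(1-s\mid\Theta)\le1$. By \cref{a:prop:two-sided-amenability}, every reflection then acts on $\Theta$ as a reflection or the identity, and both $\Theta$ and $\Theta^*$ are amenable. I expect this to be the only delicate point: one must count the multiplicity of the root at $q=1$ correctly and must not appeal to the factor $1-\zeta q$.

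\textbf{Invariant series \eqref{a:eq:product}.} This is \cref{b:prop:invariant-parameters} rewritten, since
\[
 \frac{[p+e_i(\Theta^*)]_q}{[d_i]_q}=\frac{1-q^{p+e_i(\Theta^*)}}{1-q^{d_i}}.
\]
It is a polynomial with nonnegative integer coefficients because it is the Hilbert series of the finite-dimensional graded space $A_\Theta^W$. Setting $q=1$ gives $\prod_i(p+e_i(\Theta^*))/\prod_i d_i$, and $\prod_i d_i=|W|$ gives the stated dimension.

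\textbf{Determinant series \eqref{c:eq:det-hilbert}.} Apply the exact functor $\Hom_W(\det\Theta,-)$ to the Koszul resolution; exactness holds because we work in characteristic zero. Using $\bigwedge^j\Theta\otimes(\det\Theta)^*\cong\bigwedge^{r-j}\Theta^*$, we get
\[
 \Hom_W\bigl(\det\Theta,S\otimes\textstyle\bigwedge^j\Theta\bigr)\cong\bigl(S\otimes\textstyle\bigwedge^{r-j}\Theta^*\bigr)^W.
\]
The Koszul term $K_j$ carries an extra shift by $q^{pj}$. Since $\Theta$ is amenable, \cref{b:thm:OS} with $M=\Theta$ gives the Hilbert series of $(S\otimes\bigwedge^{r-j}\Theta^*)^W$ as the coefficient of $u^{r-j}$ in $\prod_i(1+uq^{e_i(\Theta)})/\prod_i(1-q^{d_i})$. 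The alternating sum over $j$, weighted by $(-q^p)^j$, recombines into
\[
 \sum_{j}(-q^p)^j\,\varepsilon_{r-j}\bigl(q^{e_1(\Theta)},\ldots,q^{e_r(\Theta)}\bigr)=\prod_{i=1}^r\bigl(q^{e_i(\Theta)}-q^p\bigr),
\]
where $\varepsilon_k$ denotes the $k$-th elementary symmetric polynomial; this uses $\dim\Theta=r$. Dividing by $\prod_i(1-q^{d_i})$ gives \eqref{c:eq:det-hilbert}. It is again a polynomial with nonnegative coefficients, being the Hilbert series of a subspace of $A_\Theta$.

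As a consistency check, the same computation with $\Hom_W(\mathbf 1,-)$ and $M=\Theta^*$ reproduces \eqref{a:eq:product}. Here one uses $(S\otimes\bigwedge^j\Theta)^W=(S\otimes\bigwedge^j(\Theta^*)^*)^W$ and the amenability of $\Theta^*$.
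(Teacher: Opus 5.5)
Your proof is correct. The two Hilbert series are obtained as in the paper: \eqref{a:eq:product} from \cref{b:prop:invariant-parameters}, and the determinant series by applying \cref{b:thm:OS} to the Koszul resolution twisted by $(\det\Theta)^*$.

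The difference is in the reflection step. You compare orders of vanishing at $q=1$ in the character formula of \cref{prop:koszul}. That is exactly the argument the paper uses later, in \cref{c:prop:reduction}, for $(2)\Rightarrow(3)$. The paper's proof of this theorem argues geometrically instead. Every $s$-eigenvector in $\Theta$ with eigenvalue $\lambda\neq1$ vanishes on the hyperplane $H$. Hence the restrictions of the fixed eigenvectors already cut out the origin in $H$, and Krull's height theorem gives $\dim\Theta^s\geq r-1$.

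Your route shows that the reflection condition depends only on the graded character being a polynomial, so it applies to any graded module with that character. It does rely on the regular-sequence property behind \cref{prop:koszul}. The height argument uses only the zero locus of $\Theta$. That is why it adapts directly to \cref{a:cor:faithful-partial-sequence}, where the quotient is infinite-dimensional and polynomiality is unavailable.

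Your closing consistency check, which rederives \eqref{a:eq:product} from \cref{b:thm:OS} with $M=\Theta^*$, is the computation in \cref{cor:formal-products}. It needs the amenability of $\Theta^*$ to be established first. By contrast, \cref{b:prop:invariant-parameters} needs no amenability and also exhibits the invariant parameters explicitly.
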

\begin{proof}
Let $s$ be a reflection with reflecting hyperplane $H$, and choose
an eigenbasis of $\Theta$ for $s$. If $s\theta=\lambda\theta$
with $\lambda\ne1$, then for $v\in H$ we have
$\lambda\theta(v)=(s\theta)(v)=\theta(s^{-1}v)=\theta(v)$,
so $\theta|_H=0$. Thus the restrictions of the $\dim\Theta^s$ fixed
basis elements already have the origin as
their common zero, because $\Theta$ is an hsop. The radical of the
ideal they generate in $\C[H]$ is therefore the maximal ideal of the
origin, of height $r-1$. Krull's height
theorem~\cite[Lemma~10.60.12(1)]{Stacks} gives
$r-1\leq\dim\Theta^s$.
Since $\dim\Theta=r$, every reflection acts on $\Theta$ as a
reflection or the identity. By~\cref{a:prop:two-sided-amenability},
both $\Theta$ and $\Theta^*$ are amenable.

The invariant product of~\cref{a:eq:product} follows directly from
\cref{b:prop:invariant-parameters}.
For the determinant multiplicity, we follow
Bessis--Reiner~\cite[Proposition~4.2]{BR}, using the exterior-algebra
theorem of Orlik--Solomon~\cite[Theorem~3.1]{OS}, recalled in
\cref{b:thm:OS}.
The natural pairing of complementary exterior powers gives
$\bigwedge^j\Theta\otimes(\det\Theta)^*
\cong\bigwedge^{r-j}\Theta^*$.
Tensoring the Koszul resolution in~\cref{prop:koszul} with the ungraded
representation $(\det\Theta)^*$ and taking $W$-invariants computes
$\Hom_W(\det\Theta,A_\Theta)$. Both operations preserve exactness.
In the following calculation the exterior powers are regarded as
ungraded representations; the factors $q^{pj}$ account for their
degrees in the resolution. Applying~\cref{b:thm:OS} to $\Theta$ gives
\begin{align*}
 \Hilb\bigl(\Hom_W(\det\Theta,A_\Theta);q\bigr)
 &=\sum_{j=0}^r(-q^p)^j
   \Hilb\bigl((S\otimes\textstyle\bigwedge^{r-j}\Theta^*)^W;q\bigr)\\
 &=\prod_{i=1}^r\frac{q^{e_i(\Theta)}-q^p}{1-q^{d_i}}.
\end{align*}
Indeed, the term indexed by $j$ selects $r-j$ exterior generators.
Each selected generator contributes $q^{e_i(\Theta)}$, and each of
the $j$ omitted generators contributes $-q^p$. Summing over all
subsets gives the product.
These are Hilbert series of graded multiplicity spaces, so their
coefficients are nonnegative integers. Taking $q\to1$ in
\cref{a:eq:product} and using $\prod_i d_i=|W|$ proves the dimension
formula.
\end{proof}

When $\Theta$ carries $U=(V^*)^\sigma$, the two products are
$\Cat^+_{p,U}$ and $\Cat^-_{p,U}$ from~\cref{c:thm:main}.

\faithfulhsoplemma*

\begin{proof}
A Galois twist of $V^*$ is faithful. Conversely, suppose that $U$ is
faithful. By \cref{a:thm:automatic}, every reflection acts on $U^*$ as a
reflection. Thus \cref{a:prop:faithful-reflection-module} gives
$U^*\cong (U^*)^W\oplus V^\sigma$. Since $\dim U=r$, the fixed
summand vanishes, and dualizing gives $U\cong(V^*)^\sigma$.
\end{proof}

The same conclusion extends to faithful homogeneous regular sequences.
\begin{corollary}\label{a:cor:faithful-partial-sequence}
Let $W\subseteq\operatorname{GL}(V)$ be a nontrivial irreducible complex
reflection group of rank $r$. If $\Theta\subseteq S_p$, $p>0$, is a
faithful $W$-stable subspace whose basis is a regular sequence, then
$\dim\Theta=r$ and $\Theta$ is a posh hsop carrying a Galois
twist of $V^*$.
\end{corollary}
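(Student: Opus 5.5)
The plan is to show that a faithful regular sequence of length $k\defeq\dim\Theta$ in degree $p$ must have length exactly $r$. Once $k=r$ is established, $\Theta$ is an hsop by definition, it is posh because it is faithful, and \cref{a:cor:faithful-hsop} identifies its module as a Galois twist of $V^*$. Two facts are immediate. First, since $S$ has Krull dimension $r$ and the basis of $\Theta$ is a regular sequence in a graded polynomial ring, we have $k\le r$. Second, because $W$ acts faithfully on $\Theta$ and $W$ is nontrivial, $k\ge1$. The remaining work is to exclude $k<r$.

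To do this, I will rerun the local argument from the proof of \cref{a:thm:automatic} in the more general setting. Let $s$ be a reflection with hyperplane $H$, and choose an eigenbasis of $\Theta$ for $s$. Each eigenvector with eigenvalue $\lambda\ne1$ vanishes on $H$, so it is divisible by the linear form $\alpha_H$ defining $H$. Suppose two such eigenvectors $\theta,\theta'$ exist. Then $\theta=\alpha_H g$ and $\theta'=\alpha_H g'$, and the relation $g'\theta-g\theta'=0$ is a syzygy. Since $\theta,\theta'$ are part of a regular sequence (in any order, because the sequence is homogeneous), this syzygy must be Koszul, so $\theta$ divides $g'\theta/\ldots$; more precisely, $\theta\mid g'$. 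Degree comparison ($\deg g'=p-1<p$) rules this out unless $g'=0$, which is impossible. Hence $\operatorname{rank}(1-s\mid\Theta)\le1$ for every reflection $s$, so every reflection acts on $\Theta$ as a reflection or the identity.

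Next I apply \cref{a:prop:faithful-reflection-module} to $M=\Theta$, which is faithful. This gives $\Theta\cong\Theta^W\oplus V^\sigma$ (up to dualizing conventions), so $k=\dim\Theta^W+r\ge r$. Combined with $k\le r$, we get $\Theta^W=0$ and $k=r$. The conclusion then follows as described in the first paragraph.

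The main obstacle is the reflection step, namely showing that the reflection acts with rank at most one. The divisibility argument for two eigenvectors sharing the factor $\alpha_H$ is the cleanest route, but one subtlety needs checking. Eigenvectors of $s$ are linear combinations of the basis elements, and I must confirm that a regular-sequence basis can be replaced by an eigenbasis while staying regular. This holds because, for homogeneous elements of equal degree in a graded ring, regularity depends only on the span: it is equivalent to the span generating an ideal of height $\dim\Theta$. With that fact, any basis of $\Theta$ is regular, and the Koszul-syzygy argument applies.
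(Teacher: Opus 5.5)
Your argument is correct. Its skeleton matches the paper's:
\begin{itemize}
\item regularity gives $\dim\Theta\le r$;
\item a local analysis shows each reflection moves at most a line of $\Theta$;
\item faithfulness upgrades this to ``acts as a reflection'';
\item \cref{a:prop:faithful-reflection-module} then forces a summand $V^\sigma$, so $\dim\Theta\ge r$.
\end{itemize}

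You differ from the paper only in the local step. The paper never changes basis. It sets $Z=V(\Theta)$, which depends only on the span, and uses regularity once to get $\dim Z=r-\dim\Theta$. It then sandwiches
\[
 r-1-\dim\Theta^s\le\dim(Z\cap H)\le r-\dim\Theta,
\]
because the non-fixed eigenvectors vanish on $H$ and each fixed one cuts the dimension by at most one. You instead observe that two non-fixed eigenvectors $\theta,\theta'$ are both divisible by $\alpha_H$. Hence $(\theta,\theta')\subseteq(\alpha_H)$ has height at most one, so they cannot belong to a regular sequence.

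Your route is more elementary, but it needs an extra fact: an $s$-eigenbasis, and any pair taken from it, is again regular. Your justification works, but note that the height characterization of regularity uses that $S$ is Cohen--Macaulay, not just graded. Alternatively, a change of basis within a single degree preserves the Koszul complex up to isomorphism, and homogeneous regular sequences can be permuted. The paper's dimension count avoids this issue entirely, and it is the same height argument used in \cref{a:thm:automatic}.

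Three small repairs are needed.
\begin{itemize}
\item From $g'\theta=g\theta'$, regularity gives $\theta\mid g$ (equivalently $\theta'\mid g'$), not $\theta\mid g'$. The degree comparison is unchanged.
\item State explicitly that faithfulness prevents a reflection from acting as the identity. \cref{a:prop:faithful-reflection-module} requires every reflection to act as a reflection, not merely as a reflection or the identity.
\item ``$\Theta$ is an hsop by definition'' needs one more line. A regular sequence of length $r$ generates a homogeneous ideal of height $r$, so $V(\Theta)$ is a zero-dimensional cone, hence the origin.
\end{itemize}
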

\begin{proof}
Put $\ell\defeq\dim\Theta$. Regularity gives $\ell\le r$ and
$\dim Z=r-\ell$ for $Z\defeq V(\Theta)$. For a reflection $s$
with hyperplane $H$, choose an
$s$-eigenbasis of $\Theta$. Every nonfixed eigenvector vanishes on $H$, so
$Z\cap H$ is cut out inside $H$ by the restrictions of the fixed
eigenvectors. Therefore
\[
 r-1-\dim\Theta^s\leq\dim(Z\cap H)\leq r-\ell,
\]
and $\dim\Theta^s\geq\ell-1$. Thus every reflection acts on $\Theta^*$ as a reflection or
the identity, and faithfulness excludes the identity. By
\cref{a:prop:faithful-reflection-module}, $\Theta^*$ contains a summand
isomorphic to $V^\sigma$, so $\ell\geq r$. Therefore $\ell=r$ and $Z$
has dimension zero. Since $Z$ is a cone containing the origin,
$Z=\{0\}$ and $\Theta$ is an hsop. The fixed summand vanishes, and
dualizing proves the claim.
\end{proof}

\begin{remark}\label{rem:faithful-scope}
Thus \cref{c:thm:main} classifies the degrees and generator modules
of all faithful $W$-stable regular sequences of common positive degree:
each has length $r$ and carries a Galois twist.
The nonfaithful parameter problem is discussed in
\cref{sec:beyond-hsops}.
\end{remark}

\begin{corollary}\label{cor:formal-products}
Let $U=(V^*)^\sigma$. If the determinant quotient in
\cref{c:thm:main}(2) is the character of a finite-dimensional graded
$W$-module, then both $\Cat^+_{p,U}$ and $\Cat^-_{p,U}$ belong to
$\Z_{\geq0}[q]$.
\end{corollary}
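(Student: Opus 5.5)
Let $\chi(w;q)$ be the determinant quotient in \cref{c:thm:main}(2), and suppose $\chi(w;q)=\sum_j\chi_{M_j}(w)q^j$ for a finite-dimensional graded $W$-module $M=\bigoplus_j M_j$. The plan is to compute the graded multiplicities of the trivial and determinant characters in $M$ by averaging $\chi$ over $W$. We then evaluate these averages formally, by the same Koszul/Orlik--Solomon bookkeeping used in the proof of \cref{a:thm:automatic}. No hsop is needed: every step there except the final identification with $A_\Theta$ is a statement about characters. Nonnegativity and integrality then come for free, since the results are Hilbert series of multiplicity spaces $\Hom_W(\mathbf 1,M_j)$ and $\Hom_W(\det U,M_j)$.

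For the trivial multiplicity, write $\det(1-q^pw\mid U)=\sum_j(-q^p)^j\chi_{\wedge^jU}(w)$ and $\det(1-qw\mid V^*)^{-1}=\chi_S(w;q)$. Then $|W|^{-1}\sum_w\chi(w;q)=\sum_j(-q^p)^j\Hilb((S\otimes\bigwedge^jU)^W;q)$. Since $U=(V^*)^\sigma$ is a Galois twist, every reflection acts on $U$ and on $U^*$ as a reflection. By \cref{a:prop:two-sided-amenability}, both $U$ and $U^*$ are amenable. Applying \cref{b:thm:OS} with $M=U^*$ (so that $\bigwedge M^*=\bigwedge U$) gives the generating function $\prod_i(1+uq^{e_i(U^*)})/\prod_i(1-q^{d_i})$. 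Substituting $u=-q^p$ yields $\prod_i(1-q^{p+b_i})/(1-q^{d_i})=\Cat^+_{p,U}$. For the determinant multiplicity, average $\chi(w;q)\overline{\chi_{\det U}(w)}$. Using $\bigwedge^jU\otimes(\det U)^*\cong\bigwedge^{r-j}U^*$ and \cref{b:thm:OS} with $M=U$, the same expansion over subsets gives $\prod_i(q^{a_i}-q^p)/(1-q^{d_i})=\Cat^-_{p,U}$.

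These two identities hold as formal power series in $q$, with the left-hand sides defined through the character expansion of $\chi$. This is legitimate because $\chi_S(w;q)$ is a power series and all sums over $W$ are finite. Since $M$ is finite-dimensional, each left-hand side is a polynomial with coefficients $\dim\Hom_W(\mathbf1,M_j)$ or $\dim\Hom_W(\det U,M_j)$. These coefficients are nonnegative integers, so $\Cat^\pm_{p,U}\in\Z_{\ge0}[q]$.

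The main point needing care is the amenability input, which lets \cref{b:thm:OS} apply to both $U$ and $U^*$. In \cref{a:thm:automatic} this came from the hsop; here it must come from the Galois-twist structure. To see that it holds, note that a Galois twist of a reflection preserves $\operatorname{rank}(1-s)$, and dualizing preserves it as well. There is also a minor bookkeeping check: the twist conjugates characters, so the $\det U$ appearing in the averaging must be the one matching the module $U$ in the numerator, not $\det V^*$.
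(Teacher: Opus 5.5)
Your proposal is correct and follows the paper's proof. Amenability of $U$ and $U^*$ comes from the Galois-twist structure via \cref{a:prop:two-sided-amenability}. The trivial and $\det U$ multiplicities of the expanded determinant quotient are then evaluated with \cref{b:thm:OS}, taking $M=U^*$ and $M=U$ respectively, to give $\Cat^+_{p,U}$ and $\Cat^-_{p,U}$; these are nonnegative integer polynomials because they are multiplicities in the given finite-dimensional module. You spell out the bookkeeping (which module is fed into Orlik--Solomon, and the duality $\bigwedge^jU\otimes(\det U)^*\cong\bigwedge^{r-j}U^*$) that the paper leaves implicit.
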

\begin{proof}
A Galois twist of a reflection is again a reflection, so
\cref{a:prop:two-sided-amenability} proves amenability of $U$ and
$U^*$ independently of hsop existence. Expanding the numerator as
in~\cref{prop:koszul} gives the same alternating exterior-power series
used in the proof of \cref{a:thm:automatic}. This calculation requires
only the determinant quotient, so it applies without an hsop.
Taking its trivial and $\det U$ multiplicities and applying
\cref{b:thm:OS} gives $\Cat^+_{p,U}$ and $\Cat^-_{p,U}$, respectively.
If the quotient is the character of a finite-dimensional graded module,
these multiplicities are polynomials with nonnegative integer
coefficients. This gives the polynomiality part of $(2)\Rightarrow(3)$
in \cref{c:thm:main} once $U$ is known to be a Galois twist.
\end{proof}

\subsection{Untwisted parameters and permutation modules}\label{sec:permutation}

We now prove that a parking space is a permutation module exactly
when its parameters carry $V^*$. 

The \defn{reduced determinant}
$\rdet(1-w)$ of a finite-order operator $w$ is the product of the factors
$1-\lambda$ with $\lambda\ne1$. Let $\Theta$ be a posh hsop of degree $p$
carrying $U=(V^*)^\sigma$, as in \cref{a:cor:faithful-hsop}.
In the character formula of \cref{prop:koszul}, the numerator and
denominator have the same number $\dim V^w$ of vanishing factors as
$q\to1$. Each pair contributes $p$, while Galois conjugation sends
the remaining eigenvalues on $V^*$ to those on $U$. Therefore
\begin{equation}\label{eq:twisted-character}
 \chi_{A_\Theta}(w)=p^{\dim V^w}
 \frac{\sigma\bigl(\rdet(1-w\mid V^*)\bigr)}
      {\rdet(1-w\mid V^*)}.
\end{equation}
This is the hsop character formula in
Bessis--Reiner~\cite[Proposition~4.2, equation~(4.2)]{BR}.
The untwisted specialization $w\mapsto p^{\dim V^w}$ also appears in
the Orlik--Solomon expansion into parabolic permutation characters~\cite{OS82,OS83};
see Ito--Okada~\cite[Proposition~2.3]{IO}.
The exterior-invariant identity of Orlik--Solomon~\cite[Theorem~3.1]{OS}
underlies the multiplicity products in \cref{a:thm:automatic}; here
we obtain the character value directly by specializing
\cref{prop:koszul}.

\permutationtheorem*
\begin{proof}
Suppose that $A_\Theta$ is a permutation module, and write
$U=(V^*)^\sigma$. If $w$ is \defn{elliptic}, meaning $V^w=0$, then
\cref{eq:twisted-character} gives
\[
 \chi_{A_\Theta}(w)=
 \frac{\sigma\bigl(\det(1-w\mid V^*)\bigr)}
      {\det(1-w\mid V^*)}.
\]
This is a nonzero permutation-character value, therefore a positive
integer. Its norm from $K_W$ to $\Q$ is one, so it equals one.
On nonelliptic elements $\det(1-w\mid V^*)=0$. Consequently
\[
 \sigma\bigl(\det(1-w\mid V^*)\bigr)=\det(1-w\mid V^*)
 \qquad(w\in W).
\]

If $r=1$, then $\det(1-w\mid V^*)=1-\chi_{V^*}(w)$, so
$(V^*)^\sigma\cong V^*$. Suppose $r>1$.
The exterior powers $\bigwedge^jV^*$, $0\leq j\leq r$, are irreducible
and pairwise nonisomorphic by H.~Hu~\cite[Theorem~1.2]{Hu}. Therefore
\[
 \det(1-w\mid V^*)
 =\sum_{j=0}^r(-1)^j\chi_{\wedge^jV^*}(w)
\]
is an expansion into distinct irreducible characters. Galois
conjugation preserves irreducibility and distinctness. Since this
virtual character is fixed by $\sigma$, the character of
$(V^*)^\sigma$ occurs in the expansion with coefficient $-1$.
Thus $(V^*)^\sigma\cong\bigwedge^jV^*$ for an odd $j$.
Dimensions give $\binom rj=r$, so $j=1$ or $j=r-1$.
For $r>2$, a reflection with nontrivial eigenvalue $\lambda$ on $V^*$
has eigenvalue $\lambda$ with multiplicity $r-1$ on
$\bigwedge^{r-1}V^*$. Its fixed space there has codimension $r-1$,
whereas its fixed space on $(V^*)^\sigma$ has codimension one.
This excludes $j=r-1$.
For $r=2$, the two possibilities coincide. Therefore $U\cong V^*$.

Conversely, if $U\cong V^*$, then \cref{eq:twisted-character} gives
$\chi_{A_\Theta}(w)=p^{\dim V^w}$. Ito--Okada~\cite[Theorem 1.4]{IO} show
that this is a permutation character whenever the untwisted graded
character exists, including the dihedral case; see \cref{b:thm:IO}.
\end{proof}

\begin{example}\label{ex:h3-two-types}
Continue \cref{ex:h3-multiplicity}, with $\sigma(\sqrt5)=-\sqrt5$. We will construct posh hsops of degree $5$ in \cref{c:lem:seeds}
that give quotients of vector-space dimension $125$ by \cref{prop:koszul}.
By~\cref{a:thm:automatic}, their invariant Hilbert series are
\begin{align*}
 \Cat^+_{5,V^*}(q)
 &=\frac{1-q^{14}}{1-q^2}
   =1+q^2+q^4+q^6+q^8+q^{10}+q^{12},\\
 \Cat^+_{5,(V^*)^\sigma}(q)
 &=\frac{(1-q^8)(1-q^{12})}{(1-q^2)(1-q^6)}
   =1+q^2+q^4+2q^6+q^8+q^{10}+q^{12}.
\end{align*}
Their invariant dimensions are $7$ and $8$, respectively. Both
determinant multiplicities vanish, since both fake-degree lists contain $5$.

View $H_3$ as the full symmetry group of the icosahedron. Its longest
element is the central inversion $w_0=-I$. Let $R$ be a rotation through
$2\pi/5$ about an axis through opposite vertices, and put
$w\defeq w_0R=-R$. Thus $w$ has order ten and $w^5=w_0$; in particular,
$w$ is not the longest element. Its eigenvalues on $V^*$ are
$-1,-\zeta_5,-\zeta_5^{-1}$, so $w$ is elliptic and
\[
 \rdet(1-w\mid V^*)=2(1+\zeta_5)(1+\zeta_5^{-1})=3+\sqrt5.
\]
For the twisted quotient, \cref{eq:twisted-character} gives
\[
 \chi_{A_\Theta}(w)=\frac{3-\sqrt5}{3+\sqrt5}
                  =\frac{7-3\sqrt5}{2}.
\]
This irrational value excludes a permutation module. The untwisted
quotient is a permutation module by \cref{c:thm:untwisted}.
\end{example}

There is also a converse for non-dihedral groups:
Ito--Okada~\cite[Theorem 1.4]{IO} show that $w\mapsto p^{\dim V^w}$ is a character if and
only if it is a permutation character, and these conditions give
exactly the untwisted degrees in \cref{tab:infinite-untwisted,tab:exceptional-untwisted}.
For dihedral groups the character and permutation-character
conditions are still equivalent, but their range can be larger.
For $I_2(m)$, Ito--Okada's permutation range is $p=1$,
or $p\ge m-1$ with
\[
 p^2\equiv1\pmod m\quad(m\text{ odd}),\qquad
 p^2\equiv1\pmod{2m}\quad(m\text{ even}).
\]
This can exceed the range $p\equiv\pm1\pmod m$ for untwisted posh hsops.

\begin{example}\label{c:ex:dihedral-permutation-without-hsop}
For $I_2(15)$ and $p=19$, the class function
$w\mapsto19^{\dim V^w}$ is a permutation character, but there is no
untwisted posh hsop of degree $19$. The twisted
posh hsop of degree $19$ exists, and its quotient has a different, non-permutation character.
\end{example}

The package~\cite{Computations} also records the permutation obstruction
for exceptional conjugacy classes, independently of the proof of
\cref{c:thm:untwisted}.

\section{Classification of posh pairs}\label{sec:proof-algebraic}\label{sec:computations}

\subsection{Reduction and proof strategy}\label{c:sec:reduction}

We reduce to Galois twists, determine the pairs satisfying the
polynomiality conditions, and construct hsops to prove they are posh.

\classificationtheorem*

\begin{proposition}\label{c:prop:reduction}
Under the hypotheses of \cref{c:thm:main}, the implications
$(1)\Rightarrow(2)\Rightarrow(3)$ hold. Moreover, condition (3) forces
$U\cong(V^*)^\sigma$ for some Galois automorphism $\sigma$.
\end{proposition}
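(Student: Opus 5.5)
The plan is to prove the two implications separately and then extract the Galois-twist conclusion from condition (3) alone.

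For $(1)\Rightarrow(2)$, take a posh hsop $\Theta\subseteq S_p$ carrying $U$. By \cref{prop:koszul}, the graded character of $A_\Theta$ is $\det(1-q^pw\mid\Theta)/\det(1-qw\mid V^*)$. Since $\Theta\cong U$ as ungraded modules, this equals the class function in (2). Moreover $A_\Theta$ is finite-dimensional, so (2) holds.

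For $(2)\Rightarrow(3)$, I first show that each reflection acts on $U$ as a reflection.

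1. Let $s$ be a reflection with nontrivial eigenvalue $\lambda$ on $V^*$. Restrict the quotient to the cyclic group $\langle s\rangle$. It is still the character of a finite-dimensional graded module, so it is a polynomial in $q$ when evaluated at every power $s^k$.
2. At $w=s$, the denominator is $(1-q)^{r-1}(1-\lambda q)$.
3. The numerator is $\prod_j(1-\mu_jq^p)$, where $\mu_j$ are the eigenvalues of $s$ on $U$. Write $m$ for the number of $\mu_j$ equal to $1$.
4. Any factor $1-\mu q^p$ with $\mu\ne1$ does not vanish at $q=1$. The factors with $\mu_j=1$ each contribute a single factor $(1-q)$. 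So divisibility by $(1-q)^{r-1}$ forces $m\ge r-1$.
5. Hence $\operatorname{rank}(1-s\mid U)\le1$.
6. Faithfulness excludes $\operatorname{rank}(1-s\mid U)=0$, so $s$ acts on $U$ as a reflection.

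Next I identify $U$. Since $U$ is faithful, $r$-dimensional, and every reflection acts on it as a reflection, the same holds for $U^*$. \Cref{a:prop:faithful-reflection-module} applied to $U^*$ gives $U^*\cong(U^*)^W\oplus V^\sigma$. The dimension count kills the fixed summand, and dualizing gives $U\cong(V^*)^\sigma$. With $U$ a Galois twist, \cref{cor:formal-products} applies directly: the trivial and $\det U$ multiplicities of the graded module in (2) are $\Cat^+_{p,U}$ and $\Cat^-_{p,U}$, so both lie in $\Z_{\ge0}[q]$. This completes (3).

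The final claim is that (3) by itself forces $U\cong(V^*)^\sigma$. This uses the same argument: $U$ is faithful by hypothesis and every reflection acts on it as a reflection, so \cref{a:prop:faithful-reflection-module} applies to $U^*$ as above.

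The main obstacle is the local polynomiality step at a single reflection. The module in (2) is only graded and finite-dimensional, so I must justify that its character values are polynomials. They are, since each value is a finite sum of eigenvalue-weighted graded dimensions. Then the vanishing order at $q=1$ must be counted carefully, treating nontrivial eigenvalues $\mu$ that are roots of unity, where $1-\mu q^p$ can vanish at other points but never at $q=1$.
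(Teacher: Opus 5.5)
Your proposal is correct and matches the paper's proof step for step. You use \cref{prop:koszul} for $(1)\Rightarrow(2)$. You compare vanishing orders at $q=1$ for a reflection to get $\dim U^s\geq r-1$, and faithfulness then makes $s$ a reflection on $U$. You apply \cref{a:prop:faithful-reflection-module} to $U^*$ with the dimension count to get $U\cong(V^*)^\sigma$, and finish with \cref{cor:formal-products}; the same reflection-plus-faithfulness argument gives the final claim under (3).
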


\begin{proof}
\Cref{prop:koszul} gives $(1)\Rightarrow(2)$.
Suppose that (2) holds. For a reflection $s$, the denominator of the
determinant quotient vanishes to order $r-1$ at $q=1$, while the
numerator vanishes to order $\dim U^s$. Polynomiality therefore gives
$\dim U^s\geq r-1$. Since $U$ is faithful and has dimension $r$,
every reflection acts on $U$ as a reflection.
By~\cref{a:prop:faithful-reflection-module}, applied to $U^*$,
we obtain $U\cong(V^*)^\sigma$ for some $\sigma$.
Now \cref{cor:formal-products} gives the two polynomiality conditions,
proving $(2)\Rightarrow(3)$.
Under (3), the reflection condition, faithfulness, and
$\dim U=r$ likewise force $U$ to be a Galois twist of $V^*$.
\end{proof}

It remains to prove $(3)\Rightarrow(4)\Rightarrow(1)$.
\Cref{c:sec:imprimitive} proves both implications for the infinite
families. For the exceptional groups,
\cref{c:sec:certificates} determines all possible pairs, and
\cref{tab:exceptional-methods-overview} organizes their constructions.
We assemble these results in \cref{c:sec:completion}.

\begin{table}[htbp]
\centering
\begin{tabular}{@{}L{.61\textwidth}L{.31\textwidth}@{}}
\toprule
Groups or parameters & Construction\\
\midrule
Well-generated groups, $\gcd(p,h)=1$ & \cref{c:thm:GG}\\
$G_4,G_6,H_3,G_{27}$, remaining degrees & \cref{c:sec:noncoprime}\\
$G_7,G_{11},G_{12},G_{13},$\newline $G_{15},G_{19},G_{22}$ & \cref{c:lem:scalar}\\
$G_{31}$ & \cref{c:lem:eigenspace}\\
\bottomrule
\end{tabular}
\caption{Constructions for the exceptional groups in
\cref{c:thm:main,c:thm:untwisted}, with $h=d_r$ for a
well-generated group. The degree and generator types are listed in
\cref{c:tab:exceptional,tab:exceptional-untwisted}.}
\label{tab:exceptional-methods-overview}
\end{table}

\subsection{Infinite families}\label{c:sec:imprimitive}

For $d\mid m$, let $G(m,d,r)$ be the group of $r\times r$ monomial
matrices whose nonzero entries are $m$th roots of unity and whose
product is an $(m/d)$th root of unity. In rank one we take $d=1$,
so $G(m,1,1)=C_m$. The symmetric group $S_n$ acts on its
rank-$(n-1)$ reflection representation.

\begin{proposition}\label{c:prop:infinite-classification}
Under the hypotheses of \cref{c:thm:main}, condition (3) holds
for the infinite families
exactly for the following pairs:
\begin{enumerate}
\item $W=S_n$: $\gcd(p,n)=1$ and $U\cong V^*$;
\item $W=G(m,d,r)$ irreducible, with $m\ge2$:
$\gcd(p,m)=1$ and $U\cong(V^*)^{\sigma_p}$.
\end{enumerate}
Every listed pair is posh; the list agrees with \cref{tab:infinite-algebraic}.
\end{proposition}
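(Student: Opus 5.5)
The proof has two halves. Necessity is a fake-degree computation showing that condition (3) fails outside the listed pairs. Sufficiency is an explicit hsop construction: coordinate powers for $G(m,d,r)$ and Haiman's maps (\cref{c:ex:haiman}) for $S_n$. By \cref{c:prop:reduction} we may assume $U\cong(V^*)^\sigma$ for some Galois automorphism $\sigma$.

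For $S_n$ the group is rational, so $K_W=\Q$ and $U\cong V^*$. The invariant degrees are $2,\dots,n$ and $e_i(V)=1,\dots,n-1$. Then
\[
\Cat^+_{p,V^*}=\prod_{i=2}^n\frac{1-q^{p+i-1}}{1-q^i}=\frac{1}{[p+n]_q}\binom{p+n}{n}_q\cdot\frac{1-q^{p+n}}{1-q^{p}}\cdot(\text{unit}),
\]
which is essentially the rational $q$-Catalan number $\frac{1}{[p+n]_q}\binom{p+n}{n}_q$. I would test polynomiality with cyclotomic factors. If $g=\gcd(p,n)>1$, then $\Phi_g$ divides $1-q^i$ for the $n/g$ values $i\in\{g,2g,\dots,n\}$. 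It divides $1-q^{p+i-1}$ only for the $n/g-1$ values $i-1\in\{g,\dots,n-g\}$, so exactly one factor of $\Phi_g$ is missing and $\Cat^+$ is not a polynomial. When $\gcd(p,n)=1$, \cref{c:ex:haiman} provides the hsop directly.

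For $G(m,d,r)$, the coordinate-power map $x\mapsto(x_1^p,\dots,x_r^p)$ has only the origin as its zero fibre. It is equivariant for monomial matrices exactly when entries act via $\zeta\mapsto\zeta^p$. So the span of $x_1^p,\dots,x_r^p$ carries $(V^*)^{\sigma_p}$ and is faithful when $\gcd(p,m)=1$. This handles sufficiency. For necessity, suppose $g=\gcd(p,m)>1$. I would first use the reflection condition. The diagonal reflections $\operatorname{diag}(\zeta_m^{d},1,\dots)$ exist when $d<m$, and when $d=m$ I would use the transpositions twisted by roots of unity. Combined with Marin--Michel, this condition pins down which twists $\sigma$ are possible. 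I would then apply the same cyclotomic counting to $\Cat^\pm$. The invariant degrees are $m,2m,\dots,(r-1)m,rm/d$. The fake degrees of a twist $(V^*)^{\sigma_k}$ are computed from the monomial basis: $b_i$ has the form $im-k'$ up to adjustment of the last one by the $d$-factor, where $k'\equiv k\pmod m$.

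Next, take a prime $\ell\mid g$ and count factors of $\Phi_\ell$ (or of $\Phi_{m}$) in the numerator and denominator of $\Cat^+$, or of $\Cat^-$ if that one fails instead. The numerator exponents $p+b_i\equiv -k'\pmod \ell$ are never divisible by $\ell$ when $\gcd(k,m)=1$, while every denominator exponent $im$ is divisible by $\ell$. Hence $\Phi_\ell$ appears at least $r-1$ times in the denominator and never in the numerator, so $\Cat^+$ is not a polynomial. When $\gcd(p,m)=1$, matching residues in the same count should force $k\equiv p\pmod{m}$ up to the ambiguity already absorbed in $K_W$, which identifies $U\cong(V^*)^{\sigma_p}$.

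I expect the main obstacle to be the $G(m,d,r)$ bookkeeping. This includes computing the fake degrees $e_i((V^*)^{\sigma_k})$ uniformly, together with the exceptional small cases: $G(2,2,r)=D_r$, $G(m,m,2)$ dihedral, $G(4,2,2)$, and rank one, where $d=1$ is forced and Example~\ref{ex:intro-cyclic} applies. Dihedral groups need care because $K_W$ is real and $\sigma_p$ and $\sigma_{-p}$ coincide there. The statement ``$U\cong(V^*)^{\sigma_p}$'' must therefore be read modulo that identification. For $G(m,m,2)$, I would use $\Cat^-$ and the invariant degrees $2,m$ to confirm that $p\equiv\pm k\pmod m$ is forced.
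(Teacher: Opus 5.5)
Your overall strategy is the paper's: coordinate powers and Haiman's maps for sufficiency, and cyclotomic counting in $\Cat^\pm$ for necessity. Your direct $\Phi_g$ count for $S_n$ is a correct substitute for the appeal to Ito--Okada. (Your displayed product actually equals $\frac{1}{[p+n]_q}\binom{p+n}{n}_q$ exactly; the extra factor $(1-q^{p+n})/(1-q^p)$ is spurious.)

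The gap is in the necessity argument for $G(m,d,r)$ in rank two. Put $a=m/d$ and $c_k=\lceil k/a\rceil$. The exponents $e_i(U^*)$ are $m-k,\dots,(r-1)m-k$ and $rac_k-k$. The last one is in general not $\equiv -k \pmod{\ell}$, so your claim that $\Phi_\ell$ never divides the numerator of $\Cat^+$ is false. For $r\ge3$ this is harmless: there are at least $r-1\ge2$ denominator factors against at most one numerator factor. For $r=2$ and $2<d<m$, however, the denominator $(1-q^m)(1-q^{2a})$ contains $\Phi_m$ only once, and $\Cat^+$ alone excludes nothing.
\begin{itemize}
\item For $G(6,3,2)$, $U=V^*$, $p=3$: the exponents are $\{5,3\}$, the degrees are $6,4$, and $\Cat^+=(1-q^8)/(1-q^4)=1+q^4$.
\item In the coprime case, for $G(10,5,2)$, $U=V^*$, $p=7\not\equiv1\pmod{10}$: $\Cat^+=(1-q^{16})/(1-q^4)$.
\end{itemize}
In both examples only $\Cat^-$ fails. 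So your step ``matching residues should force $k\equiv p$'' has no mechanism behind it. Your list of delicate cases ($D_r$, dihedral, $G(4,2,2)$, rank one) also misses exactly the family $G(m,d,2)$ with $2<d<m$ where the difficulty lies.

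The missing idea is to use both products simultaneously at a primitive $m$th root of unity. Suppose $p\not\equiv k\pmod m$.
\begin{itemize}
\item Polynomiality of $\Cat^+$ forces $p-k\equiv -2ac_k \pmod m$.
\item Polynomiality of $\Cat^-$, whose exponents are $k$ and $2ac_{m-k}-(m-k)$, forces $p-k\equiv 2ac_{m-k}\pmod m$. This also holds when $\Cat^-=0$.
\item Since $a>1$ and $\gcd(k,a)=1$, we have $c_k+c_{m-k}=d+1$. Hence $m\mid 2a(d+1)$, i.e.\ $d\mid 2$.
\item But then both degrees $m$ and $2m/d$ are multiples of $m$, so $\Phi_m^2$ divides the denominator of $\Cat^+$. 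Meanwhile $1-q^{p+m-k}$ does not vanish, so at most one numerator factor does, a contradiction.
\end{itemize}
This $\Phi_m$ count, together with the easy cases $r\ge3$ and the dihedral case $p\equiv\pm k$, gives $p\equiv k$ directly. Coprimality $\gcd(p,m)=1$ then follows from $\gcd(k,m)=1$, so your separate $\Phi_\ell$ step is unnecessary.

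The argument does need the exact last exponent $rac_k-k$, which you deferred as the ``main obstacle.'' The paper obtains it as follows. It uses the coordinate-power hsops $\Theta_p$ with $p=mj+k$, for every residue $k$ (coprime or not). It counts $W$-invariant monomials in $A_{\Theta_p}$ directly and compares with $|W|^{-1}\prod_i(p+e_i(\Theta^*))$ from \cref{a:thm:automatic}, as polynomials in $j$.

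Finally, your preliminary use of the reflection condition with Marin--Michel cannot pin down $\sigma$. Every Galois twist of $V^*$ already satisfies that condition.
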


We first construct the coordinate-power hsops and use them to compute
the fake degrees needed for necessity.

\begin{proposition}\label{a:prop:monomial}
Let $W=G(m,d,r)$, where $m\ge2$, $d\mid m$, and $r\ge2$. For
$1\le k\le m$, let $M_k$ be the monomial representation obtained by
replacing each nonzero matrix entry $\lambda$ by $\lambda^k$, and put
$U_k\defeq M_k^*$. For every $p=mj+k$, $j\ge0$, the subspace
\[
 \Theta_p\defeq\operatorname{span}\{x_1^p,\ldots,x_r^p\}
\]
is an hsop carrying $U_k$. Put $a\defeq m/d$ and $c_k\defeq\lceil k/a\rceil$.
Then
\begin{equation}\label{a:eq:fake}
 \{e_i(U_k^*)\}
 =\{m-k,2m-k,\ldots,(r-1)m-k,\;ra c_k-k\}.
\end{equation}
In particular, no coprimality assumption on $k$ is needed.
\end{proposition}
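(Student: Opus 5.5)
The plan has three steps: check that $\Theta_p$ is an hsop carrying $U_k$, compute the graded invariants of $A_{\Theta_p}$ by a monomial count, and then read off the fake degrees from \cref{b:prop:invariant-parameters}.

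\emph{Step 1: $\Theta_p$ is an hsop carrying $U_k$.} The common zero set of $x_1^p,\ldots,x_r^p$ is clearly the origin. A monomial matrix $w$ with permutation $\pi$ and entries $\lambda_i$ sends each coordinate function $x_i$ to a scalar multiple $\mu x_{\pi'(i)}$, where $\mu$ is an $m$th root of unity; the precise form is fixed by the convention $(wf)(x)=f(w^{-1}x)$. Then $w x_i^p=\mu^p x_{\pi'(i)}^p=\mu^k x_{\pi'(i)}^p$, because $\mu^m=1$ and $p\equiv k\pmod m$. So $\Theta_p$ is $W$-stable, and the matrix of $w$ on the basis $x_i^p$ is the matrix of $w$ on $V^*$ with every entry raised to the $k$th power. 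I will check that this is exactly the dual of $M_k$, i.e.\ $U_k$, by comparing with the case $k=1$, where $\Theta_p\cong V^*=M_1^*$. The dualities commute with entrywise powering because monomial matrices have inverse-transpose given entrywise by $\lambda\mapsto\lambda^{-1}$.

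\emph{Step 2: reduce the fake degrees to a Hilbert series.} By \cref{b:prop:invariant-parameters},
\[
 \Hilb(A_{\Theta_p}^W;q)=\prod_{i=1}^r\frac{1-q^{p+e_i(U_k^*)}}{1-q^{d_i}},
\]
with invariant degrees $m,2m,\ldots,(r-1)m,ra$. Multiplying by $\prod_i(1-q^{d_i})$ gives $\prod_i(1-q^{p+e_i})$. This product determines the multiset $\{p+e_i\}$, since all $p+e_i>0$: peel off the smallest exponent appearing with coefficient $-1$, divide, and induct. It therefore suffices to compute $\Hilb(A_{\Theta_p}^W;q)$ independently and show it equals the same product with the claimed list \cref{a:eq:fake}.

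\emph{Step 3: compute the invariants.} The quotient $A_{\Theta_p}$ has monomial basis $x^\alpha$ with $0\le\alpha_i<p$, and $W$ permutes these monomials up to scalars. The diagonal subgroup $D=\{\operatorname{diag}(\lambda_i):\lambda_i^m=1,\ (\prod\lambda_i)^a=1\}$ fixes $x^\alpha$ exactly when the character $\lambda\mapsto\lambda^\alpha$ is trivial on $D$. That happens exactly when $\alpha\equiv(\ell a,\ldots,\ell a)\pmod m$ for some $\ell$. On such monomials $S_r$ acts by permutation with trivial signs, so the invariants are spanned by orbit sums. Hence $\Hilb(A^W;q)$ is the sum over residues $t\in\{0,a,2a,\ldots,m-a\}$ of the generating function for multisets of $r$ exponents in $[0,p)$, all congruent to $t$ mod $m$. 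Writing $p=mj+k$, the residues $t<k$ allow $j+1$ values and those $t\ge k$ allow $j$ values. Each summand is then a $q^{rt}$-shifted Gaussian binomial in $q^m$:
\[
 q^{rt}\binom{n_t+r-1}{r}_{q^m},\qquad n_t\in\{j,j+1\}.
\]
There are $c_k=\lceil k/a\rceil$ residues of the first kind. The main obstacle is summing these over $t$ and matching
\[
 \frac{\prod_{i=1}^{r-1}(1-q^{mj+im})\,(1-q^{mj+rac_k})}{\prod_{i=1}^{r-1}(1-q^{im})\,(1-q^{ra})}.
\]
I would first factor out the common Gaussian-binomial part $\prod_{i=1}^{r-1}(1-q^{m(j+i)})/(1-q^{mi})$. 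What remains should be a geometric sum in $q^{ra}$ of length $c_k$, contributed by the extra top exponent in the $t<k$ classes, which telescopes against $(1-q^{ra})$.

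If this bookkeeping becomes unpleasant, the fallback is to exhibit explicit covariants. Multiplied into $\Theta_p$ they give the invariant parameters $\sum_i x_i^{p-k+im}$ for $1\le i\le r-1$, together with $(x_1\cdots x_r)^{ac_k}\sum_i x_i^{p-k}$. One would then verify that these generate $(\Theta_p)\cap S^W$ by comparing their degree product with $\dim A^W$.
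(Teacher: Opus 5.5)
Your proposal is correct. Step~1 and the monomial count in Step~3 are exactly what the paper does: the $D$-fixed residue classes $\ell a$, the $S_r$-orbit sums, and the $c_k$ classes with $j+1$ admissible exponents. The two routes differ only in how the fake degrees are extracted.

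The paper specializes to $q=1$. It computes
\[
 \dim A_{\Theta_p}^W=c_k\binom{j+r}{r}+(d-c_k)\binom{j+r-1}{r}=\frac{(dj+rc_k)(j+1)\cdots(j+r-1)}{r!},
\]
and equates this with $|W|^{-1}\prod_i(p+e_i(U_k^*))$ from \cref{a:thm:automatic}, using $|W|=m^rr!/d$. Since $U_k$ depends only on $k$, it then matches the linear factors $mj+k+e_i$ with $mj+m,\ldots,mj+(r-1)m,\,mj+rac_k$ as polynomials in $j$. So the paper needs infinitely many degrees but only one number per degree.

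You instead use the graded series at a single degree, together with the fact that $\prod_i(1-q^{n_i})$ determines the multiset $\{n_i\}$. One small correction there: the lowest nonconstant coefficient is minus the multiplicity of the smallest $n_i$, not necessarily $-1$. Dividing off one factor at a time still works.

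The bookkeeping you flagged as the main obstacle does close. Put $Q=q^m$ and $y=q^{ra}$, so $Q^r=y^d$, and let $B=\prod_{i=1}^{r-1}(1-Q^{j+i})/(1-Q^i)$. Then
\[
 \Hilb(A_{\Theta_p}^W;q)
 =\frac{B}{1-Q^r}\Bigl[(1-Q^j)\sum_{\ell=0}^{d-1}y^\ell
   +Q^j(1-Q^r)\sum_{\ell=0}^{c_k-1}y^\ell\Bigr]
 =B\,\frac{1-Q^jy^{c_k}}{1-y},
\]
which is the claimed product.

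Since your argument needs only one degree, you can skip this computation by taking $j=0$, so $p=k$. The $W$-invariants of $A_{\Theta_k}$ are then spanned by $(x_1\cdots x_r)^{\ell a}$ with $0\le\ell<c_k$. Hence $\Hilb(A_{\Theta_k}^W;q)=(1-q^{rac_k})/(1-q^{ra})$. Multiplying by $\prod_i(1-q^{d_i})$ immediately gives
\[
 \prod_i\bigl(1-q^{k+e_i(U_k^*)}\bigr)=(1-q^{rac_k})\prod_{i=1}^{r-1}(1-q^{im}).
\]
With this choice your graded route is, if anything, shorter than the paper's, and the explicit-covariant fallback is not needed.
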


\begin{proof}
Entrywise powers preserve products of monomial matrices, so $M_k$ is
a representation. The coordinate-power map has the stated covariance
and only the origin as its zero. A monomial is fixed by the diagonal
subgroup exactly when all its exponents are congruent to $ba$ modulo
$m$ for some $0\le b<d$. In the range $0,\ldots,mj+k-1$, exactly
$c_k$ of these residue classes have $j+1$ representatives and the others
have $j$. Coordinate permutations therefore give
\[
 \dim A_{\Theta_p}^W=c_k\binom{j+r}{r}+(d-c_k)\binom{j+r-1}{r}
 =\frac{(dj+rc_k)(j+1)\cdots(j+r-1)}{r!}.
\]
Substituting $p=mj+k$ in the dimension formula of~\cref{a:thm:automatic} and using
$|W|=m^r r!/d$ proves~\cref{a:eq:fake} by equality of monic polynomials
for infinitely many $j$.
\end{proof}

\begin{proof}[Proof of \cref{c:prop:infinite-classification}]
By \cref{c:prop:reduction}, it suffices to consider Galois twists of
$V^*$.

\emph{Coordinate-power constructions.}
For $C_m$ and $G(m,d,r)$, write $U=(V^*)^{\sigma_k}$, where
$1\le k<m$, $\gcd(k,m)=1$, and
$\sigma_k(\zeta_m)=\zeta_m^k$. Coordinate powers
\[
 F_p(x_1,\ldots,x_r)\defeq(x_1^p,\ldots,x_r^p)
\]
give the required hsop when $p\equiv k\pmod m$, by
\cref{a:prop:monomial} and its immediate rank-one analogue.
In the dihedral case, swapped coordinate powers give the residue
$-k$ as well.

\emph{Fake degrees.}
The exponents $b_i=e_i(U^*)$ are given by~\cref{a:eq:fake}, with
$a=m/d$ and $c_k=\lceil k/a\rceil$ as in~\cref{a:prop:monomial}.
The $a_i=e_i(U)$ are obtained by replacing $k$ with $m-k$ in that formula.

\emph{Necessary congruences.}
If $r\ge3$, the denominator of $\Cat^+_{p,U}$ has a zero of order at least
$r-1$ at a primitive $m$th root of unity. If $p\not\equiv k\pmod m$,
at most its last numerator factor vanishes, which is impossible.
If $r=2$ and $d<m$, assume again $p\not\equiv k\pmod m$ and set
$c'\defeq c_{m-k}$. Polynomiality of $\Cat^+$ forces
$p-k\equiv-2ac_k\pmod m$, whereas polynomiality of $\Cat^-$ forces
$p-k\equiv2ac'\pmod m$. The latter congruence also holds if $\Cat^-=0$,
since its first exponent is $k$. As $c_k+c'=d+1$, these congruences
force $d\mid2$. But then both invariant degrees $m,2m/d$ are divisible
by $m$, and the numerator of $\Cat^+$ has only one vanishing factor, again
impossible. For $I_2(m)$, the exponents are $k,m-k$, so polynomiality at
a primitive $m$th root forces $p\equiv\pm k$.
The twists indexed by $k$ and $-k$ are isomorphic, so either congruence
gives $U\cong(V^*)^{\sigma_p}$.
The rank-one assertion is immediate.

\emph{Type $A$.}
For $S_n$ there is only the trivial twist. The classification recalled
in~\cref{b:thm:IO} makes (3) equivalent to $\gcd(p,n)=1$.
Haiman's construction
in \cref{c:ex:haiman}, or \cref{c:thm:GG}, supplies the hsop.
\end{proof}

In particular, for the Weyl groups of types $B_r$ and $C_r$, both
realized as $G(2,1,r)$, and of type $D_r=G(2,2,r)$, a posh hsop
of degree $p$ exists exactly when $p$ is odd, and it is untwisted.
For every positive odd $p$, the coordinate powers
$\Theta_p=\operatorname{span}\{x_1^p,\ldots,x_r^p\}$ give such an hsop.
Thus no coprimality condition with $h$ is required in these types:
for example, $p=3$ is allowed in both $B_3$ and $D_4$, whose Coxeter
number is $h=6$.

\subsection{Exceptional polynomiality conditions}\label{c:sec:certificates}

\begin{proposition}\label{c:prop:exceptional-polynomiality}
Let $W$ be an exceptional irreducible complex reflection group and
$U$ a Galois twist of $V^*$. Both products in \cref{c:thm:main}(3)
are polynomials exactly for the pairs $(p,U)$ in \cref{c:tab:exceptional}.
\end{proposition}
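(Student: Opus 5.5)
The plan is a finite cyclotomic test, carried out group by group over the exceptional groups $G_4,\ldots,G_{37}$. Fix $W$ and a Galois twist $U=(V^*)^\sigma$. The $\sigma$ to consider are the restrictions to $K_W$ of the $\sigma_k$ with $\gcd(k,\mathfrak f_W)=1$, taken up to isomorphism of the twisted module. For each such $U$, the fake-degree lists $a_i=e_i(U)$ and $b_i=e_i(U^*)$ are read off from the known fake degrees of the irreducible characters. These are tabulated, for example in CHEVIE; the package~\cite{Computations} handles this bookkeeping.

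The key observation is that $\Cat^\pm_{p,U}$ is a quotient of products of binomials. It is therefore a polynomial exactly when, for every $n\geq1$, the multiplicity of the cyclotomic polynomial $\Phi_n$ in the numerator is at least its multiplicity in the denominator. Only $n$ dividing some $d_i$ matter. The denominator multiplicity is
\[
 \delta(n)\defeq\#\{i:n\mid d_i\}.
\]
For $\Cat^+$, the numerator multiplicity is $\#\{i:n\mid p+b_i\}$. For $\Cat^-$, write $q^{a_i}-q^p=\pm q^{\min}(1-q^{|p-a_i|})$; the $i$th factor then contributes when $n\mid p-a_i$. If some $a_i=p$, the product vanishes, and $0$ counts as a polynomial. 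Both conditions thus depend only on $p$ modulo $L\defeq\operatorname{lcm}(d_1,\ldots,d_r)$, together with the degenerate case $p=a_i$, which is again handled by the zero product.

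Concretely, the condition to test is: for each divisor $n$ of some $d_i$,
\[
 \#\{i:b_i\equiv-p\ (\mathrm{mod}\ n)\}\geq\delta(n)
 \quad\text{and}\quad
 \#\{i:a_i\equiv p\ (\mathrm{mod}\ n)\}\geq\delta(n).
\]
The second inequality is dropped if $\Cat^-=0$. The second condition also needs care for small $p$: when $p<a_i$ the sign flips but the factor is still a binomial, so the same criterion holds. I then enumerate residues $p\bmod L$ for each $(W,\sigma)$, record the surviving residue classes, and compare with \cref{c:tab:exceptional}. The rank-one reflection check in condition (3) is automatic for Galois twists.

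The main obstacle is the size and reliability of this enumeration rather than any conceptual difficulty. There are $34$ groups, several twists each, and $L$ as large as $2520$ for $E_8$. The fake-degree data for the twisted modules must also be correct, which is the delicate input for non-real groups such as $G_{31}$ and $G_{27}$. To make the result verifiable, I would organize it as a certificate: for each excluded residue class, exhibit one $n$ violating an inequality. For the surviving classes, the inequalities for all $n\mid L$ are checked directly. As sanity checks, I would confirm that the surviving untwisted classes reproduce Ito--Okada's list (\cref{b:thm:IO}). For well-generated $W$ and $\gcd(p,h)=1$, the survivors should be exactly $U\cong(V^*)^{\sigma_p}$, consistent with \cref{c:thm:GG}.
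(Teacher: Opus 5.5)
Your proposal is correct and follows the paper's proof: both reduce polynomiality to the cyclotomic inequalities $\#\{i:n\mid p+b_i\}\ge\#\{i:n\mid d_i\}$ and $\#\{i:n\mid p-a_i\}\ge\#\{i:n\mid d_i\}$, use periodicity modulo $L=\operatorname{lcm}(d_1,\ldots,d_r)$, and settle the finitely many resulting cases with CHEVIE fake-degree data in the computation package. The one step to state explicitly is that the enumeration must also cover the finitely many degrees $p=a_i$, where $\Cat^-=0$ and only the $\Cat^+$ test applies, because these could a priori be isolated exceptions outside the surviving residue classes; the paper handles this by separately testing every $1\le p\le\max_i a_i$ and finds none.
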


\begin{proof}
\emph{Finite reduction.}
For each group and twist put $L\defeq\operatorname{lcm}(d_1,\ldots,d_r)$ and
$A\defeq\max_i a_i$. Above $A$, polynomiality of
$\Cat^+_{p,U}(q)$ and $\Cat^-_{p,U}(q)$
depends only on $p\pmod L$: each factor $1-q^n$ contains the
cyclotomic polynomial $\Phi_\delta(q)$ exactly once when $\delta\mid n$.
We test one representative greater than
$A$ in every residue class, using the inequalities
\[
 \#\{i:\delta\mid p+b_i\}\geq\#\{i:\delta\mid d_i\},
 \qquad
 \#\{i:\delta\mid p-a_i\}\geq\#\{i:\delta\mid d_i\}
\]
for every $\delta$ dividing an invariant degree. We also test every
$1\leq p\leq A$. If $p=a_i$ for some $i$, the second product is zero;
otherwise its cyclotomic multiplicities are obtained from $|p-a_i|$.
Thus finitely many cyclotomic divisibility tests determine all
positive degrees.

\emph{Evaluation.}
The finite calculation is implemented in the
\href{https://personal.utdallas.edu/~nxw170830/docs/algebraic-hsops/}{accompanying computation package}~\cite{Computations},
using the CHEVIE package~\cite{GHLMP,Michel} for reflection-group data.
It checks the $128$ distinct Galois twists of the $34$ exceptional
groups and gives precisely the rows of \cref{c:tab:exceptional},
with no additional small-degree exceptions. Testing the divisors of
$L$ gives the least period of each row.
\end{proof}

\subsection{The four remaining well-generated groups}\label{c:sec:seeds}

Among well-generated exceptional groups, the coprime construction
in~\cref{c:thm:GG} leaves precisely the degrees
$p\equiv3\pmod6$ for $G_4$,
$p\equiv3,9\pmod{12}$ for $G_6$,
$p\equiv5\pmod{10}$ for $H_3$, and
$p\equiv5,25\pmod{30}$ for $G_{27}$, with the generator twists
listed in~\cref{c:tab:exceptional}.
We state eight seed hsops, show how they cover every remaining pair
by composition, and then verify the seeds.

The need for constructions beyond the coprime Cherednik theorem
already appears in $H_3$ at $p=5$. M.~Balagovi\'c and A.~Puranik~\cite[Theorem 10.1]{BalagovicPuranik} compute
that the spherical simple Cherednik module $L_{1/2}(\mathbf1)$ has
dimension $115$; a degree-five
hsop quotient has dimension $125$. Thus the quintic parking modules
cannot be identified with that simple module.

\subsubsection{Seeds and coverage}\label{c:sec:noncoprime}

We use seed hsops in the sense of \cref{b:def:seed}.

\begin{lemma}\label{c:lem:seeds}
The eight pairs $(p,(V^*)^{\sigma_k})$ in \cref{tab:seeds} are posh.
\end{lemma}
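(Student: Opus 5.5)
We don't see \cref{tab:seeds}, but from the surrounding discussion the eight seeds must be non-coprime degrees for $G_4$ ($p\equiv3\pmod6$), $G_6$ ($p\equiv3,9\pmod{12}$), $H_3$ ($p=5$, both twists), and $G_{27}$ ($p\equiv5,25\pmod{30}$). The natural choices are the smallest degrees in these classes: $p=3$ for $G_4$, $p=3$ and $9$ for $G_6$, $p=5$ (untwisted and twisted) for $H_3$, and $p=5$ and $25$ for $G_{27}$. We would take the twists as listed in \cref{c:tab:exceptional}.

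The plan is entirely constructive. For each seed we write down an explicit equivariant homogeneous map $F\colon V\to V^{\sigma}$ of degree $p$ and verify the two defining conditions: covariance, and that the zero fibre is the origin. Then \cref{a:prop:faithful-reflection-module} together with \cref{a:cor:faithful-hsop} confirms that the hsop is posh.

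To produce $F$, we use the covariant description in \cref{b:prop:invariant-parameters}. Equivariant maps $V\to V^\sigma$ of degree $p$ form a free $S^W$-module with basis in degrees $e_i(V^\sigma)$. By \cref{prop:hsop-multiplicity}, the space of such maps in degree $p$ is small: for example, it is $2$-dimensional for $H_3$ at $p=5$ (\cref{ex:h3-multiplicity}).

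For $U=V^*$ the basis consists of gradients $\nabla f_i$ of basic invariants. Thus for $H_3$, $p=5$, every candidate has the form $F=\alpha Q^2\nabla Q+\beta\nabla f_6$. For twisted $U$ we obtain a basis by applying $\sigma$-twisted transvectants: explicitly, we take a copy of $(V^*)^\sigma$ in low degree (such as a GG hsop from \cref{c:thm:GG} at a coprime degree) and multiply it by invariants. In $G_4$ and $G_6$ we can work with the standard rank-two coordinates and explicit Klein-type invariants. In $G_{27}$ we would work inside the Valentiner model.

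Next comes the zero-fibre check, which we would do by Gröbner basis or resultant computation in the accompanying package~\cite{Computations}. For a generic (or a specific rational) choice of coefficients, we verify that $\dim_\C S/(\Theta)=p^r$, equivalently that the ideal $(\Theta)$ contains a power of the maximal ideal. A cheaper certificate is to check that the invariant parameters $g_i$ from \cref{b:prop:invariant-parameters} have only the origin as common zero in $\operatorname{Spec} S^W$. This is a computation in the polynomial ring $S^W$ in $r\le3$ variables, and it suffices because $S$ is finite over $S^W$. For rank two (the groups $G_4$ and $G_6$) this reduces to a single resultant in $f_1,f_2$ being nonzero. For $H_3$ and $G_{27}$ we would compute the resultant of three weighted-homogeneous polynomials, or check that a Gröbner basis has finite colength equal to $\prod_i(p+e_i(\Theta^*))/|W|$, the count given by \cref{a:thm:automatic}.

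Some degrees can also be reduced by \cref{c:lem:composition}. For example, $G_6$ at $p=9$ might come from composing a degree-$3$ seed with itself, but only if the resulting twists match the required twists. Where they do not, the seed must be constructed directly.

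The main obstacle is the degree-$25$ case for $G_{27}$, together with exhibiting an honest zero-free choice for the higher-rank seeds. In these cases the candidate space is large, and the symbolic verification is expensive. For $G_{27}$ at $p=25$ we would first try composing a degree-$5$ seed with itself and tracking the resulting twist $\sigma^2$. Only if $\sigma^2$ gives the wrong twist would we fall back on a random rational coefficient choice, with the finiteness verified modulo a large prime and then lifted by semicontinuity.
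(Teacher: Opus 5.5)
Your strategy matches the paper's: write explicit degree-$p$ covariant maps $F\colon V\to V^\sigma$, check covariance on generators, and certify $F^{-1}(0)=\{0\}$ by a Gr\"obner-basis colength computation. The gap is that the proposal never carries it out, and for this lemma the computation \emph{is} the proof. The zero-free maps form a Zariski-open subset of the covariant space, and that subset may be empty. The unique degree-three copy $QV^*$ for $H_3$ in \cref{ex:h3-multiplicity} is such a case. So existence can only be shown by exhibiting an element. The paper does exactly this:
\begin{itemize}
\item explicit cubics for $G_4$ and $G_6$, whose zero fibres can even be checked by hand, e.g.\ $(x^3+4y^3,-3x^2y)$;
\item quintics with coefficients in $\Q(\sqrt5)$ for $H_3$;
\item rational quintics for $G_{27}$;
\end{itemize}
with quotient dimensions $9=3^2$ and $125=5^3$.

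You also aim at the wrong pairs. \Cref{tab:seeds} lists two twists at a single prime degree for each group:
\begin{itemize}
\item $G_4$: $p=3$, $k=1,2$;
\item $G_6$: $p=3$, $k=7,11$;
\item $H_3$: $p=5$, $k=1,2$;
\item $G_{27}$: $p=5$, $k=2,11$.
\end{itemize}
Degrees $9$ and $25$ are not seeds. They come from composing two seeds in \cref{c:prop:seed-coverage}, so your ``main obstacle'' disappears. However, you are missing the second cubic twist for $G_6$ and the second quintic twist for $G_{27}$.

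Your recipe for twisted candidates also would not find these maps. The $S^W$-multiples of a single covariant $G$ are maps $fG$, and these vanish on the hypersurface $f=0$ whenever $\deg f>0$. Worse, for $G_4,G_6,G_{27}$ the smallest invariant degree ($4,4,6$ respectively) exceeds $p$, so no such product even has degree $p$. In every seed case an hsop needs a nonzero component along a basic covariant of degree exactly $p$. For $H_3$, writing $c_j$ for a basic covariant of degree $j$, the degree-five covariant space is $\C Q^2c_1\oplus\C c_5$ (untwisted) or $\C Qc_3\oplus\C c_5$ (twisted). The practical way to get the degree-$p$ covariant space is to solve the linear equations $F(gv)=\sigma(g)F(v)$ on degree-$p$ polynomial maps; the paper's package retains such covariant-space searches. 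Relatedly, degree-$p$ maps $V\to V^\sigma$ correspond to $\Hom_W((V^*)^\sigma,S_p)$, so the basis degrees are $e_i((V^*)^\sigma)$, not $e_i(V^\sigma)$. For $G_4$, the identity map $V\to V$ has degree $1$, while $e_i(V)=3,5$; your gradient description of the untwisted maps is correct only for real groups.
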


\begin{table}[htbp]
\centering
\begin{tabular}{@{}rrrr@{}}
\toprule
$W$ & conductor $\mathfrak f_W$ & degree $p$ & twist representatives $k$\\
\midrule
$G_4$ & $3$ & $3$ & $1,2$\\
$G_6$ & $12$ & $3$ & $7,11$\\
$H_3=G_{23}$ & $5$ & $5$ & $1,2$\\
$G_{27}$ & $15$ & $5$ & $2,11$\\
\bottomrule
\end{tabular}
\caption{The eight seed hsops, carrying $(V^*)^{\sigma_k}$.
Each row contains one map for each listed twist.}\label{tab:seeds}
\end{table}

\begin{proposition}\label{c:prop:seed-coverage}
For $W=G_4,G_6,H_3,G_{27}$, every pair in
\cref{c:tab:exceptional} is realized by composing an hsop from
\cref{c:thm:GG} with zero or more seeds from \cref{c:lem:seeds}.
\end{proposition}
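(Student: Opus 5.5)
The plan is a bookkeeping argument in the group $(\Z/L\Z)^\times$-like monoid of pairs (degree residue, twist), using \cref{c:lem:composition}. Write each posh pair as $(p,\sigma)$ with $\sigma\in\operatorname{Gal}(K_W/\Q)$. Composition sends $(a,\sigma),(b,\tau)$ to $(ab,\sigma\tau)$. By \cref{c:thm:GG}, every $p$ coprime to $h$ gives $(p,\sigma_p)$. By \cref{c:lem:seeds}, we also have the seed pairs $(p_0,\sigma_{k})$ with $p_0=3$ for $G_4,G_6$ and $p_0=5$ for $H_3,G_{27}$, with the two listed twists $k$ in each case. Note $p_0$ is the unique prime dividing $h$ that is not forced out by the polynomiality conditions: $h=6,12,10,30$ respectively, and $2$ never occurs in \cref{c:tab:exceptional}, while for $G_{27}$ the prime $3$ is excluded since the residues are $5,25\pmod{30}$.

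The key step is to factor a target degree $p$ from \cref{c:tab:exceptional} as $p=p_0^{t}m$ with $t\ge1$ and $\gcd(m,h)=1$. I would first read off from the table, or from the residue conditions stated at the start of \cref{c:sec:seeds}, that every remaining $p$ has this shape, since it is odd and, for $G_{27}$, prime to $3$. Composing the GG hsop of degree $m$ with $t$ seeds yields the pair $(p,\sigma_m\tau_1\cdots\tau_t)$, where each $\tau_j$ ranges over the two seed twists. It then remains to show that, as $m$ runs over integers coprime to $h$ with $p_0^t m=p$ fixed, and the $\tau_j$ are chosen freely, the resulting twists include exactly the twist(s) listed for $p$ in \cref{c:tab:exceptional}. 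The reverse containment is automatic, since any composite is posh and hence appears in the table by \cref{c:prop:exceptional-polynomiality} and \cref{c:prop:reduction}.

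To carry this out, for each group I would record the Galois group $\operatorname{Gal}(K_W/\Q)$ as a quotient of $(\Z/\mathfrak f_W)^\times$. This gives $\Z/2$ for $G_4$ and $H_3$, $(\Z/12)^\times$ for $G_6$, and $(\Z/15)^\times$ restricted to $K_{G_{27}}$ for $G_{27}$. I would then list, for each degree residue class mod $L$ in the table, the required twist classes. Since $m$ is only constrained modulo $h$ through $p$, the freedom in choosing $t=1$ (taking all extra factors of $p_0$ beyond one into $m$ is impossible, so $t$ equals the $p_0$-adic valuation) forces $m\equiv p/p_0^{t}$, and hence $\sigma_m$ is determined. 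The question therefore becomes whether the products $\tau_1\cdots\tau_t$ of seed twists exhaust the needed coset.

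The main obstacle is this finite check. It requires that the two seed twists differ by an element generating enough of the Galois group. For $H_3$ and $G_4$, the seed twists $\{1,\sigma\}$ are the whole group, so every twist is reachable and the table, which lists both twists, is covered. For $G_6$ and $G_{27}$, I expect to verify row by row, via the computation package \cite{Computations}, that $\sigma_m\cdot\{\sigma_7,\sigma_{11}\}^{t}$, respectively $\sigma_m\cdot\{\sigma_2,\sigma_{11}\}^{t}$, reduced to $\operatorname{Gal}(K_W/\Q)$, equals the set of twists in the corresponding row. I expect the subtle point to be the dependence on the parity of $t$, and I would check it with $t=1,2$ and periodicity.
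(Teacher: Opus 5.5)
Your plan is correct and is essentially the paper's argument: write $p=\ell^a b$ with $\ell=3$ or $5$ and $\gcd(b,h)=1$, compose the degree-$b$ hsop from \cref{c:thm:GG} with $a$ seeds via \cref{c:lem:composition}, and check the twist arithmetic. The finite check you defer to the computation package is done by hand in the paper, along the parity lines you anticipate. For $G_6$, products of $t$ seeds from $\{7,11\}$ give $\{7,11\}$ for odd $t$ and $\{1,5\}$ for even $t$ modulo $12$; for $G_{27}$, working modulo the character stabilizer $\{1,4\}$, the seeds $\{2,11\}$ give $\{2,11\}$ and $\{1,7\}$ respectively. Multiplying by $\sigma_b$ then reproduces exactly the listed rows.
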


\begin{proof}[Proof assuming \cref{c:lem:seeds}]
The simultaneous polynomiality calculation in \cref{c:prop:exceptional-polynomiality}
gives the following residue lists. Here $k$
specifies the generator twist with conductor $\mathfrak f_W$ as in
\cref{tab:seeds}, and the degree period is the Coxeter number
$h=6,12,10,30$ for $G_4,G_6,H_3,G_{27}$, respectively.
Each entry lists all allowed residues of the positive degree $p$:
\begin{equation}\label{c:eq:residual-table}
\begin{array}{c|c|l}
 W&k& p\text{ modulo }h\\\hline
 G_4&1&1,3\\
    &2&3,5\\\hline
 G_6&1&1,9\\
    &5&5,9\\
    &7&3,7\\
    &11&3,11\\\hline
 H_3&1&1,5,9\\
    &2&3,5,7\\\hline
 G_{27}&1&1,19,25\\
       &2&5,17,23\\
       &7&7,13,25\\
       &11&5,11,29.
\end{array}
\end{equation}
The coprime residues are covered by~\cref{c:thm:GG}. For every other
listed pair $(p,U)$ write $p=\ell^a b$, where $\ell=3$ for $G_4,G_6$,
$\ell=5$ for $H_3,G_{27}$, and $\ell\nmid b$. Then $\gcd(b,h)=1$.
By \cref{c:lem:composition}, we may compose the degree-$b$ hsop
with $a$ suitably twisted seeds from \cref{c:lem:seeds}.

For completeness, the twist arithmetic is small. For $G_4$ and $H_3$
the seed set is the entire two-element twist group. For $G_6$ the seed
set is $\{7,11\}$ modulo $12$; its even positive powers give $\{1,5\}$
and its odd powers give $\{7,11\}$. For $G_{27}$ the character is fixed
by $\{1,4\}\subset(\Z/15\Z)^\times$. In the quotient twist group,
the seed set $\{2,11\}$ has even positive powers $\{1,7\}$ and odd
powers $\{2,11\}$. Multiplication by the twist of the degree-$b$ hsop
gives exactly~\cref{c:eq:residual-table}.
Thus composition treats all degrees, including arbitrarily large powers
of $3$ or $5$.
\end{proof}

\begin{example}\label{c:ex:h3-composition}
Continue \cref{ex:h3-two-types}, with $\sigma(\sqrt5)=-\sqrt5$.
The degree-three hsop in~\cref{c:thm:GG} gives a map
$F_3:V\to V^\sigma$. Let $F_5:V\to V$ and $G_5:V\to V^\sigma$
be the two quintic maps in~\cref{c:lem:seeds}.
Applying $\sigma$ to their coefficients gives
$F_5^\sigma:V^\sigma\to V^\sigma$ and
$G_5^\sigma:V^\sigma\to V$, since $\sigma^2$ fixes $K_{H_3}$.
Thus $F_5^\sigma\circ F_3$ and $G_5^\sigma\circ F_3$ are equivariant
maps of degree $15$, with targets $V^\sigma$ and $V$, respectively.
Each composite vanishes only at the origin by~\cref{c:lem:composition}.
Their coordinate spans therefore realize both $(15,(V^*)^\sigma)$
and $(15,V^*)$. Both quotients have dimension $15^3=3375$
by~\cref{prop:koszul}; the untwisted quotient is a permutation module
and the twisted quotient is not, by~\cref{c:thm:untwisted}.
\end{example}

\subsubsection{Verification of the seeds}

\begin{proof}[Proof of \cref{c:lem:seeds}]
\emph{The cubic seeds for $G_4$ and $G_6$.}
For $G_4,G_6$ the maps are
\[
\begin{array}{c|c|c}
 W&k&F(x,y)\\\hline
 G_4&1&(x^3+4y^3,-3x^2y)\\
 G_4&2&(x^3-2y^3,-3xy^2)\\
 G_6&7&(x^3-4y^3,-3x^2y)\\
 G_6&11&(x^3+2y^3,3xy^2).
\end{array}
\]
Their only common zero is the origin. For these maps and the two
$G_{27}$ maps below, we use source generators $g=M_g^{-\mathsf T}$
from CHEVIE's matrices.

\emph{The quintic seeds for $H_3$.}
For $H_3$, put $a^2=5$, $\phi\defeq(1+a)/2$, and use the generators
\[
 s_1\defeq\begin{pmatrix}-1&0&0\\0&1&0\\0&0&1\end{pmatrix},\qquad
 s_2\defeq\begin{pmatrix}0&1&0\\0&0&1\\1&0&0\end{pmatrix},\qquad
 s_3\defeq\frac14\begin{pmatrix}
 2&-1-a&1-a\\
 -1-a&1-a&-2\\
 1-a&-2&1+a
 \end{pmatrix}.
\]
They preserve the $30$ roots consisting of $\pm e_i$ and the cyclic
permutations of $\tfrac12(\pm1,\pm\phi,\pm(\phi-1))$, and generate
a group of order $120$. Define
\[
 f\defeq x\bigl(x^4+bz^4+cx^2y^2+dx^2z^2+ey^2z^2\bigr),
 \qquad F\defeq(f(x,y,z),f(y,z,x),f(z,x,y)).
\]
Use the coefficients
\[
\begin{array}{c|rrrr}
 \sigma(a)&b&c&d&e\\\hline
 a&(a+5)/2&a+5&0&-2a\\
 -a&-3a&(3a+5)/2&-5&(9a-15)/2.
\end{array}
\]
Direct substitution proves $F(gv)=\sigma(g)F(v)$ for the three generators.
The nontrivial twist can be represented by either $k=2$ or $k=3$
modulo $5$.

\emph{The quintic seeds for $G_{27}$.}\label{sec:g27-seeds}
For $G_{27}$, in the same matrix convention, take
$F_2\defeq(f_1,f_2,f_3)$, where
\begin{align*}
 f_1&\defeq x^5+2x^3y^2-3xy^4-6x^3yz+6xy^3z
          -3x^3z^2-9xy^2z^2-18xyz^3,\\
 f_2&\defeq -3x^4y+2x^2y^3+y^5-3x^4z+3y^4z
          +9x^2yz^2+3y^3z^2-9x^2z^3-9y^2z^3,\\
 f_3&\defeq -2x^4y-4x^2y^3-\tfrac25y^5+3x^4z-6x^2y^2z+3y^4z
          -3x^2z^3+3y^2z^3+\tfrac{18}5z^5.
\end{align*}
For the second twist take $F_{11}\defeq(g_1,g_2,g_3)$, where
\begin{align*}
 g_1&\defeq x^5-2x^3y^2+xy^4+6x^3yz+6xy^3z
          +3x^3z^2-3xy^2z^2-\tfrac92xz^4,\\
 g_2&\defeq x^4y-2x^2y^3+y^5-\tfrac32x^4z-9x^2y^2z-\tfrac32y^4z
          +3x^2yz^2-3y^3z^2-\tfrac92yz^4,\\
 g_3&\defeq -x^4y-2x^2y^3-\tfrac15y^5-x^4z+2x^2y^2z-y^4z
          +6x^2z^3-6y^2z^3+\tfrac95z^5.
\end{align*}
The coordinates lie in $\Q[x,y,z]$, although their covariance uses
the nontrivial actions of $\sigma_2$ and $\sigma_{11}$ on
$\Q(\zeta_{15})$.

For all eight maps, the package~\cite{Computations} checks homogeneity
and verifies $F(gv)=\sigma(g)F(v)$ for every source generator. The calculations use
$\Q(a)$ for $H_3$ and the corresponding cyclotomic fields for
$G_4,G_6,G_{27}$. Gr\"obner bases of the coordinate ideals give
quotient dimensions $9$ for the four cubics and $125$ for the four
quintics. These quotients are finite-dimensional, so homogeneity
implies that each map vanishes only at the origin. For $H_3$, the
package also verifies the $30$ roots and group order $120$.
For the six cyclotomic maps, it also compares the printed coefficients
with the stored data and retains the covariant-space searches that
produced the maps.
\end{proof}

\subsection{Restriction constructions}\label{c:sec:restrictions}

The remaining non-well-generated exceptional groups follow from
two restriction arguments. In rank two we extend known hsop maps
to scalar overgroups; for $G_{31}$ we restrict an $E_8$ map to a
regular eigenspace.

\subsubsection{The non-well-generated rank-two groups}\label{c:sec:scalar}

We extend the covariance of known hsop maps by scalar homogeneity,
then restrict to the seven non-well-generated rank-two groups.

\begin{lemma}\label{c:lem:scalar}
Let $H\subseteq\mathrm{GL}(V)$ be an irreducible well-generated complex
reflection group with Coxeter number $h$, and let $M$ be a positive
multiple of $h$. If $p>0$ and $\gcd(p,M)=1$, then
every subgroup $W\subseteq\mu_M H$ has a degree-$p$ hsop carrying
$(V^*)^{\sigma_p}$, where $\sigma_p(\zeta_M)=\zeta_M^p$.
\end{lemma}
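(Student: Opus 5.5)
Since $\gcd(p,M)=1$ and $h\mid M$, we have $\gcd(p,h)=1$, so \cref{c:thm:GG} applied to $H$ gives a homogeneous degree-$p$ map $F\colon V\to V^{\sigma_p}$ whose zero fibre is the origin. It satisfies $F(gv)=\sigma_p(g)F(v)$ for $g\in H$, where $\sigma_p$ acts on matrix entries; to extend $\sigma_p$ to $\Q(\zeta_M)$ compatibly, fix an automorphism of $\C$ sending $\zeta_M\mapsto\zeta_M^p$. Because $h\mid M$, this automorphism restricts on $\Q(\zeta_h)$ to the twist $\zeta_h\mapsto\zeta_h^p$ used by Gordon--Griffeth. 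We then define a representation of $\mu_M H$ on the target. A scalar $\lambda\in\mu_M$ acts by $\lambda^p=\sigma_p(\lambda)$, and $g\in H$ acts by $\sigma_p(g)$. Homogeneity gives $F(\lambda g v)=\lambda^pF(gv)=\sigma_p(\lambda)\sigma_p(g)F(v)=\sigma_p(\lambda g)F(v)$, where $\sigma_p$ is applied entrywise to the matrix $\lambda g$.

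The first thing to check is that this is well defined. An element of $\mu_M H$ may be written as $\lambda g$ in more than one way: if $\lambda g=\lambda' g'$, then $\lambda^{-1}\lambda'=g(g')^{-1}$ is a scalar in $H\cap\mu_M$. The entrywise formula $w\mapsto\sigma_p(w)$ applied to the matrix $w\in\mu_MH\subseteq\operatorname{GL}_r(\Q(\zeta_M))$ does not depend on any decomposition, so it is automatically a homomorphism. This requires a matrix realization of $H$ over $\Q(\zeta_M)$, and I would justify it by Benard--Bessis: $K_H\subseteq\Q(\zeta_h)\subseteq\Q(\zeta_M)$ (the introduction notes $\mathfrak f_H\mid h$ for well-generated groups). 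The equivariance identity is then the computation above: $F(wv)=\sigma_p(w)F(v)$ for every $w\in\mu_MH$, hence for every subgroup $W$.

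Next, let $\Theta$ be the span of the coordinate functions of $F$. It is a $W$-stable hsop of degree $p$, because its zero set is unchanged by passing to a subgroup. As a $W$-module it is the dual of the target representation $w\mapsto\sigma_p(w)$ restricted to $W$, which is $(V^*)^{\sigma_p}$ with $\sigma_p$ now read through $K_W\subseteq\Q(\zeta_M)$. This twist is faithful, so if $W$ is irreducible, $\Theta$ is posh by \cref{a:cor:faithful-hsop}; in any case it carries the required module.

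The step I expect to need the most care is matching conventions. The twist in \cref{c:thm:GG} is specified by $\sigma_p(\zeta_h)=\zeta_h^p$ on $K_H$, and the identification of $F$'s covariance with entrywise Galois action must be taken in a fixed realization over $\Q(\zeta_M)$. I would handle this by choosing a realization of $V$ over $K_H$, so that the chosen automorphism of $\C$ acts on it as the Gordon--Griffeth twist. I would then verify directly that the target of $F$, as an $H$-module, is isomorphic to $V^{\sigma_p}$, and transport $F$ through that isomorphism so that the covariance becomes literally entrywise.
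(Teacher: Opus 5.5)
Your proposal is correct and follows the paper's proof. The paper also takes the Gordon--Griffeth map $F$, uses homogeneity $F(\lambda v)=\lambda^pF(v)=\sigma_p(\lambda)F(v)$ to extend the covariance from $H$ to $\mu_MH$, restricts to $W$, and notes that the zero locus is unchanged; your bookkeeping on well-definedness (the entrywise twist over $\Q(\zeta_M)$, using $\zeta_h=\zeta_M^{M/h}\mapsto\zeta_h^p$) makes explicit details the paper leaves implicit.
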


\begin{proof}
Choose the degree-$p$ map from~\cref{c:thm:GG}. Homogeneity gives
$F(\lambda v)=\lambda^pF(v)=\sigma_p(\lambda)F(v)$ for
$\lambda\in\mu_M$. Thus its covariance extends from $H$ to
$\mu_M H$, and then restricts to $W$. Its zero locus is unchanged.
\end{proof}

Apply the lemma to the following groups:
\begin{equation}\label{c:eq:scalar-table}
\begin{array}{c|c|c|c}
 W&H&M&\text{common projective image}\\\hline
 G_7&G_4&12&A_4\\
 G_{11},G_{12},G_{13},G_{15}&G_8&24&S_4\\
 G_{19},G_{22}&G_{16}&60&A_5.
\end{array}
\end{equation}
Here the required inclusions follow directly from the classification of
primitive finite subgroups of $\mathrm{GL}_2(\C)$ by
Shephard--Todd~\cite{ST}. After
conjugating their projective images to agree, write each $w\in W$ as
$\lambda h$ with $h\in H$. In each row the determinant characters of
both groups have order dividing $M/2$. Therefore
$\lambda^2=\det(w)/\det(h)\in\mu_{M/2}$ and $\lambda\in\mu_M$.

For each group in~\cref{c:eq:scalar-table}, the calculation in
\cref{c:prop:exceptional-polynomiality} shows that condition (3) of \cref{c:thm:main}
is equivalent to $\gcd(p,M)=1$ and $U\cong(V^*)^{\sigma_p}$.
\Cref{c:lem:scalar} therefore proves sufficiency in every degree.
The package~\cite{Computations} checks the determinant and projective
orders and compares the degree residues of posh pairs over a full period; the
inclusions themselves are justified by the preceding argument.

\subsubsection{Restriction from \texorpdfstring{$E_8$ to $G_{31}$}{E8 to G31}}\label{c:sec:eigenspace}

The last exceptional group is obtained by restricting an $E_8$ hsop
to a regular eigenspace. Homogeneity determines the target eigenspace
and therefore its Galois twist.

\begin{lemma}\label{c:lem:eigenspace}
Suppose a finite group $\widetilde W$ acts on a rational vector space
$\widetilde V_{\Q}$, and put
$\widetilde V\defeq\C\otimes_{\Q}\widetilde V_{\Q}$.
Let $g\in\widetilde W$ have a root-of-unity eigenvalue $\zeta$ on
$\widetilde V$, and put $V\defeq\ker(g-\zeta)$ and
$W\defeq C_{\widetilde W}(g)$. If $p>0$ and
$\gcd(p,\operatorname{ord}\zeta)=1$, then a homogeneous degree-$p$
$\widetilde W$-equivariant map $F:\widetilde V\to\widetilde V$
with $F^{-1}(0)=\{0\}$ restricts to
\[
 V\longrightarrow \ker(g-\zeta^p)\cong V^{\sigma_p},
 \qquad \sigma_p(\zeta)=\zeta^p,
\]
and its coordinate functions span an hsop for the action of $W$ on $V$.
\end{lemma}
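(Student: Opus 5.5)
The plan has three steps: show that $F$ maps $V$ into $\ker(g-\zeta^p)$, identify that eigenspace with $V^{\sigma_p}$ as a $W$-module, and check the hsop condition.

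\emph{Step 1: the target.} Since $F$ is $\widetilde W$-equivariant and homogeneous of degree $p$, for $v\in V$ we have
\[
F(v)=F(\zeta^{-1}gv)=\zeta^{-p}gF(v),
\]
so $gF(v)=\zeta^pF(v)$. Thus $F$ restricts to a map $F|_V:V\to V'\defeq\ker(g-\zeta^p)$. Every $w\in W=C_{\widetilde W}(g)$ commutes with $g$, so $w$ preserves both eigenspaces, and $F|_V$ is $W$-equivariant.

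\emph{Step 2: the twist.} Everything is defined over $\Q$: $g$ and $W$ act by rational matrices on $\widetilde V_\Q$. Since $\gcd(p,\operatorname{ord}\zeta)=1$, we can extend $\zeta\mapsto\zeta^p$ to an automorphism $\sigma$ of $\C$ (or of a cyclotomic field containing all character values and $\zeta$). Choose a basis of $V$ consisting of vectors with coordinates in $\Q(\zeta_N)$ for a suitable $N$, where $V$ is the kernel of a matrix over $\Q(\zeta)$. Applying $\sigma$ coordinatewise to these vectors gives a $\sigma$-semilinear bijection $V\to\ker(g-\sigma(\zeta))=V'$. This bijection intertwines the action of each $w\in W$, because $w$ is rational and commutes with $g$. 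In the resulting basis, $w$ acts on $V'$ by the matrix $\sigma$ applied to its matrix on $V$. Hence $V'\cong V^{\sigma}=V^{\sigma_p}$, where $\sigma_p$ is understood through its restriction to the relevant character field.

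\emph{Step 3: the hsop condition.} Since $F^{-1}(0)=\{0\}$, we get $(F|_V)^{-1}(0)=V\cap F^{-1}(0)=\{0\}$. Fix a basis of $V'$. The coordinate functions of $F|_V$ are $\dim V'$ degree-$p$ polynomials on $V$, and they span a $W$-stable subspace $\Theta\subseteq S_p$. This is the image of the map $(V')^*\to S_p$, so $\Theta\cong (V')^*$ when that map is injective. Its common zero locus is $\{0\}$. To be an hsop we also need $\dim\Theta=\dim V$. First, $\dim\Theta\ge\dim V$, because fewer polynomials cannot cut out a point in $V$. Second, $\dim V'=\dim V$: the Galois automorphism $\sigma$ permutes the eigenspaces of the rational matrix $g$ and preserves their dimensions. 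Combined with $\dim\Theta\le\dim V'$, this forces $\dim\Theta=\dim V$, and the map $(V')^*\to\Theta$ is an isomorphism.

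I expect the main obstacle to be Step 2. It requires care to make the semilinear identification precise and to check that the twist defined through $\zeta$ agrees with the $\sigma_p$ on $K_W$ used elsewhere in the paper. The other steps are formal.
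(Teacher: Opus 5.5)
Your proof is correct and follows the paper's argument. It uses the same homogeneity computation $gF(v)=\zeta^pF(v)$, the same identification of $\ker(g-\zeta^p)$ with $V^{\sigma_p}$ by Galois conjugation relative to the rational structure, and the same height bound forcing the coordinate functions to be linearly independent; your Step~2 just spells out the semilinear bijection that the paper compresses into ``rationality identifies the two eigenspaces by Galois conjugation.''
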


\begin{proof}
For $v\in V$, equivariance and homogeneity give
$gF(v)=F(gv)=\zeta^pF(v)$. The restricted map has no nonzero zero.
Rationality identifies the two eigenspaces by Galois conjugation, with
equal dimension and the asserted action of the centralizer.
The height bound then forces the coordinate functions of the restricted
map to be linearly independent, so their span has dimension $\dim V$.
\end{proof}

The group $G_{31}$ is the centralizer of a regular element of order $4$
in $E_8$, acting on its $i$-eigenspace~\cite[Theorem~1.9(2) and Example~1.10]{Bessis}.
The invariant degrees on that eigenspace are $8,12,20,24$. For every
$p$ coprime to $30$, the rational reflection representation of $E_8$
has an untwisted degree-$p$ hsop by~\cref{c:thm:GG}. Since $p$ is odd,
\Cref{c:lem:eigenspace} gives a $G_{31}$ hsop with trivial twist
if $p\equiv1\pmod4$, and conjugate twist if $p\equiv3\pmod4$.
These are precisely the two generator types and their degrees for
$G_{31}$ in \cref{c:tab:exceptional}.

The package~\cite{Computations} checks the degree selection by
divisibility by four and the two twist congruences. This is an
arithmetic check of the restriction argument.

\subsection{Completion and specializations}\label{c:sec:completion}

\begin{proof}[Completion of the proof of \cref{c:thm:main}]
\Cref{c:prop:reduction} proves $(1)\Rightarrow(2)\Rightarrow(3)$
and reduces (3) to Galois twists of $V^*$.
\Cref{c:prop:infinite-classification,c:prop:exceptional-polynomiality}
then give $(3)\Rightarrow(4)$.

For $(4)\Rightarrow(1)$, \cref{c:prop:infinite-classification}
proves that the listed infinite-family pairs are posh. For well-generated
exceptional groups with $\gcd(p,h)=1$, the listed type is
$U\cong(V^*)^{\sigma_p}$ and \cref{c:thm:GG} supplies the hsop.
\Cref{c:prop:seed-coverage} treats all remaining pairs for
$G_4,G_6,H_3,G_{27}$.
Finally, the applications of \cref{c:lem:scalar,c:lem:eigenspace}
in \cref{c:sec:restrictions} give every listed pair for the
non-well-generated exceptional groups. These cases exhaust the
tables and complete the proof.
\end{proof}

\coprimeproposition*
\begin{proof}
\Cref{c:thm:GG} proves that $(p,(V^*)^{\sigma_p})$ is posh. Conversely, the
well-generated rows of \cref{tab:infinite-algebraic,c:tab:exceptional}
in \cref{c:thm:main} allow only $U\cong(V^*)^{\sigma_p}$ when
$\gcd(p,h)=1$.
\end{proof}

\begingroup
\renewcommand{\footnote}[1]{}
\coxeterpermutationcorollary*
\endgroup
\begin{proof}
Let $\sigma_p$ be the automorphism of $\Q(\zeta_h)$ defined by
$\sigma_p(\zeta_h)=\zeta_h^p$. By \cref{prop:coprime-hsop},
$\Theta\cong(V^*)^{\sigma_p}$, so \cref{c:thm:untwisted} gives
\[
 A_\Theta\text{ is a permutation module}
 \quad\Longleftrightarrow\quad
 (V^*)^{\sigma_p}\cong V^*
 \quad\Longleftrightarrow\quad
 \sigma_p|_{K_W}=\mathrm{id}.
\]
The last equivalence follows because the character values of $V^*$
generate $K_W$. By V.~Reiner, V.~Ripoll, and C.~Stump~\cite[Theorem~1.8(iv)]{RRS},
the coefficients of the characteristic polynomial of $c$ on $V$
also generate $K_W$. Since $c$ has order $h$, applying $\sigma_p$
to these coefficients gives the characteristic polynomial of $c^p$.
Thus $\sigma_p|_{K_W}=\mathrm{id}$ exactly when $c$ and $c^p$ have
the same eigenvalues, with multiplicities. Because $\gcd(p,h)=1$,
both are regular of order $h$, and this equality is equivalent to
conjugacy in $W$ by Reiner--Ripoll--Stump~\cite[Lemma~3.4 and its proof]{RRS}.

Choose a primitive $h$th root $\zeta$ for which $c$ is $\zeta$-regular.
By Springer's eigenvalue theorem, as recalled in
Reiner--Ripoll--Stump~\cite[Theorem~2.4(ii)]{RRS}, the eigenvalues of
$c$ on $V$ are $\zeta^{-e_1},\ldots,\zeta^{-e_r}$. Thus $c^p$ has the
same eigenvalues exactly when the multisets of residues $p e_i$ and
$e_i$ modulo $h$ agree. Since $0\leq e_i<h$, this is the displayed
criterion.
\end{proof}

Specializing the classification to $U=V^*$ and applying
\cref{c:thm:untwisted} gives
\cref{tab:infinite-untwisted,tab:exceptional-untwisted}. In the infinite
families the degrees are $\gcd(p,n)=1$ for $S_n$, $p\equiv1\pmod m$
for non-dihedral $G(m,d,r)$, and $p\equiv\pm1\pmod m$ for $I_2(m)$.

\section{Further questions}
\label{sec:questions}\label{sec:beyond-hsops}

\subsection{Capricious hsops}

It seems natural to ask what happens if we relax the conditions in~\Cref{def:parking_hsop} to allow a graded $W$-stable subspace
$\Theta=\bigoplus_{a>0}\Theta_a\subseteq S$ of dimension $r$, and
require $S/(\Theta)$ to be finite-dimensional. We call such a $\Theta$ a
\defn{capricious hsop}, and its quotient $S/(\Theta)$ a
\defn{capricious parking space}~\cite[Section~6, p.~1274]{KonheimWeiss}.

A capricious hsop may be a nonfaithful representation of $W$, and its
homogeneous generators may have different degrees. For irreducible $W$, posh hsops are
precisely the faithful capricious hsops: the height argument of
\cref{a:thm:automatic} also applies to generators of different degrees,
so \cref{a:prop:faithful-reflection-module} makes a faithful $\Theta$
a Galois twist of $V^*$; a graded copy must lie in one degree.

\begin{problem}\label{prob:nonfaithful-hsops}
For an irreducible complex reflection group $W$, determine which
nonfaithful graded $W$-modules occur as capricious hsops,
and characterize their possible degrees.
\end{problem}


Several constructions illustrate the importance of this extension.
\begin{itemize}
\item Let $g_1,\ldots,g_r\in S^W$ be homogeneous invariant parameters
of degrees $a_1,\ldots,a_r$, and put
$B\defeq S^W/(g_1,\ldots,g_r)$. Tensoring the decomposition
in~\cref{b:eq:coinvariant-decomposition} with $B$ gives
\[
 S/(g_1,\ldots,g_r)\cong C_W\otimes B,\qquad
 \Hilb(B;q)=\prod_{i=1}^r\frac{[a_i]_q}{[d_i]_q}\in\N[q].
\]
The coefficient of $q^j$ in $\Hilb(B;q)$ counts copies of $C_W$
whose grading is shifted by $j$. After forgetting grading, the
quotient is a sum of $\prod_i a_i/|W|$ regular representations.
The construction in~\cref{b:prop:invariant-parameters} supplies such
capricious hsops of degrees $a_i=p+e_i(\Theta^*)$ from any
$W$-stable hsop $\Theta\subseteq S_p$.
\item Taking $g_i=f_i$, the fundamental invariants, gives $B=\C$
and recovers the coinvariant algebra $C_W=S/(f_1,\ldots,f_r)$
in~\cref{b:sec:coinvariants}. The regular action of $W$ on itself
gives an ungraded combinatorial model for this quotient.
\item If $W$ is irreducible and well-generated, choose the top degree
invariant ($\deg f_r=h$) so that the reduced branch discriminant
is monic and centered in $f_r$, say
\[
 \Delta(f_1,\ldots,f_r)
 =f_r^r+\alpha_2f_r^{r-2}+\cdots+\alpha_r,
 \qquad \alpha_i\in\C[f_1,\ldots,f_{r-1}].
\]
Finiteness of the Lyashko--Looijenga map makes
$f_r,\alpha_2,\ldots,\alpha_r$ invariant parameters of degrees
$h,2h,\ldots,rh$; see~\cite[Section~7.2]{EquivariantParameters}
and~\cite[Lemma~4.4]{DouvropoulosLL}.
Here $\Hilb(B;q)=\prod_{i=1}^r[ih]_q/[d_i]_q$.
The invariant subspace of the quotient has dimension $r!h^r/|W|$,
the number of reduced reflection factorizations of a Coxeter element $c$.
These factorizations index a generic fiber of the Lyashko--Looijenga
map~\cite[Theorem~7.20]{Bessis}.
\item A final motivation comes from Griffeth's work on diagonal coinvariants.
$V\oplus V^*$ is the underlying complex representation of the
quaternionification of $V$, and the diagonal coinvariant ring
\[
 D_W\defeq\C[V\oplus V^*]\big/
 \big\langle\bigl(\C[V\oplus V^*]^W\bigr)_+\big\rangle
\]
records the scheme-theoretic zero fiber of the quotient by $W$.
For irreducible $W$, put $g=2N/r$, where $N$ is the number of reflections;
this is an integer. Griffeth's construction from rational Cherednik
algebras gives an hsop of degree $g+1$ carrying the irreducible
KZ twist $\kappa(V^*)$, where $\kappa$ is the representation permutation
in that construction~\cite[Theorem~1.2 and Section~3R]{GriffethDiagonal}.
Its quotient $L$ has dimension $(g+1)^r$. After tensoring by
$\det(V)^*$, the associated graded module of $L$ for the filtration
generated by its unique determinant copy is a quotient of
$D_W$~\cite[Lemma~3.1]{GriffethDiagonal}.
These generators need not be faithful: for $W=G(6,3,r)$, $r\geq2$,
the degree is $6r-3$ and
\[
 \kappa(V^*)\cong\operatorname{span}_{\C}\{x_1^{6r-3},\ldots,x_r^{6r-3}\},
 \qquad \ker\kappa(V^*)\cong(\Z/3\Z)^{r-1};
\]
see~\cite[Section~8]{EquivariantParameters}.
\end{itemize}

\subsection{Polynomial differential forms}

A second generalization retains the posh condition but adds differential
information. For a posh hsop $\Theta\subseteq S_p$, define
\[
 \Omega_\Theta\defeq
 \bigl(S\otimes\textstyle\bigwedge V^*\bigr)/(\Theta,d\Theta),
 \qquad
 d\theta=\sum_{i=1}^r
       \frac{\partial\theta}{\partial x_i}\,dx_i.
\]
Here $x_1,\ldots,x_r$ is a basis of $V^*$, and the exterior generators
$dx_i$ transform in another copy of $V^*$.
Give $x_i$ bidegree $(1,0)$ and $dx_i$ bidegree $(0,1)$, so that $\theta$ and $d\theta$ have bidegrees $(p,0)$ and
$(p-1,1)$, respectively. 
The relations $d\theta=0$ give the K\"ahler differentials of
$A_\Theta$~\cite[Tag~00RU]{Stacks}: a differential form on the quotient
must respect the defining relations $\theta=0$. Thus
\[
 \Omega_\Theta\cong
 \textstyle\bigwedge_{A_\Theta}\Omega^1_{A_\Theta/\C}.
\]
Its component of exterior degree zero is $A_\Theta$. The same construction for the capricious hsop of
fundamental invariants gives the usual super coinvariant
algebra~\cite[Section~5]{SwansonWallachForms}.

\begin{problem}\label{prob:differential-parking}
For a fixed posh pair $(p,U)$, determine the bigraded $W$-characters
of $\Omega_\Theta$ as $\Theta$ varies over the degree-$p$ posh hsops
carrying $U$. Characterize those $\Theta$ for which $\Omega_\Theta$,
after forgetting both gradings, is a permutation representation.
\end{problem}

Unlike the character of $A_\Theta$, the character of $\Omega_\Theta$
can depend on the choice of hsop within a fixed posh pair.

\begin{example}\label{ex:b2-differential-parking}
Let $W=B_2$ act on $\C[x,y]$ by signed permutations, and put
\[
 \Theta_a\defeq\operatorname{span}_{\C}
 \{x^3+axy^2,\ y^3+ax^2y\},
 \qquad a\in\C\setminus\{1,-1\}.
\]
These are posh hsops carrying $V^*$. Put $\Omega_a\defeq\Omega_{\Theta_a}$,
and write $\Omega_a^j$ for its component of exterior degree $j$.
Its top component is
\[
 \Omega_a^2\cong
 \frac{\C[x,y]}{(3x^2+ay^2,\ 2axy,\ 3y^2+ax^2)}\,dx\wedge dy.
\]
Euler's identity shows that the cubic generators already lie in the
displayed ideal. For $a\notin\{0,\pm3\}$, the displayed quotient of $\C[x,y]$ has basis
$1,x,y$. At $a=0,3,-3$, it has one additional class represented by
$xy$, $x^2-y^2$, and $x^2+y^2$, respectively.

The exterior derivative and contraction with the Euler vector field
$x\partial/\partial x+y\partial/\partial y$ descend to $\Omega_a$:
the derivative sends $\theta$ to $d\theta$ and kills $d\theta$,
whereas contraction kills $\theta$ and sends $d\theta$ to $3\theta$.
Both commute with $W$. The sum of their composites in the two orders
acts on bidegree $(k,j)$ by multiplication by $k+j$. The resulting
de Rham complex is therefore
exact in positive total degree and has only $\C$ in total degree zero.
Since $\Omega_a^0=A_{\Theta_a}$, its alternating character gives
\[
 [\Omega_a]=2[A_{\Theta_a}]+2[\Omega_a^2]-[\C]
\]
in the representation ring of $W$, after forgetting both gradings.
Combining the signed-permutation actions on the displayed basis
classes, including the determinant action on $dx\wedge dy$, with
the quotient character in~\cref{prop:koszul} gives the following table.
Let $c$ be a quarter-turn, $s$ an axis reflection, and $t$ a diagonal
reflection. The ordinary ungraded characters are
\[
 \begin{array}{c|rrrrr}
 a & 1 & c^2 & c & s & t\\\hline
 a\notin\{0,\pm1,\pm3\} & 23 & -1 & 3 & 3 & 3\\
 0 & 25 & 1 & 1 & 5 & 1\\
 3 & 25 & 1 & 1 & 1 & 5\\
 -3 & 25 & 1 & 5 & 1 & 1
 \end{array}
\]
$\Omega_a$ is a permutation module exactly for $a=0$ and $a=3$.  After forgetting both gradings,
\[
 \begin{aligned}
 \Omega_0&\cong\C\oplus\C[W]^{\oplus2}
       \oplus\C[W/\langle s\rangle]^{\oplus2},\\
 \Omega_3&\cong\C\oplus\C[W]^{\oplus2}
       \oplus\C[W/\langle t\rangle]^{\oplus2}.
 \end{aligned}
\]
For all other admissible parameters, a permutation character is
impossible: the generic character takes value $-1$ at $c^2$, and
at $a=-3$ its values $5$ at $c$ and $1$ at $c^2$ contradict
$\Fix(c)\subseteq\Fix(c^2)$ for a permutation action.
\end{example}

\section*{Acknowledgements}
We thank Minh-T\^am Trinh, Stephen Griffeth, Theo Douvropoulos, and Carlos Arreche for many
helpful discussions and suggestions, and the University of Texas at Dallas for providing inspiration through its fashionable multi-tiered parking permit system. Both authors were partially supported by
NSF grant DMS-2246877. We are grateful to OpenAI for support through an Academic Workspace; although our original code from 2024 constructing hsops was written by hand in Mathematica and GAP3, for this paper we used various GPT models for exploration, for computation, and for writing.  We take full responsibility for all wordplay.

\appendix
\section{Classification tables}\label{sec:tables}

The tables list all posh pairs and their untwisted specializations
from \cref{c:thm:main,c:thm:untwisted}. 
Recall that $p$ is a positive integer, $\mathfrak f_W$ is the conductor of the reflection character
field, and $k$ specifies the automorphism
$\sigma_k(\zeta_{\mathfrak f_W})=\zeta_{\mathfrak f_W}^k$ (where
$\zeta_{\mathfrak f_W}\defeq\exp(2\pi i/\mathfrak f_W)$ and
$\gcd(k,\mathfrak f_W)=1$). We list one $k$ for each distinct twisted
character. The integer $M$ is the least period of the degrees $p$
for which $(p,(V^*)^{\sigma_k})$ is posh: an entry $R$
in the column $p\bmod M$ means exactly the positive degrees
$p\equiv s\pmod M$ with $s\in R$. 

\begin{table}[htbp]
\centering
\begin{tabular}{@{}L{.14\textwidth}L{.29\textwidth}L{.30\textwidth}L{.19\textwidth}@{}}
\toprule
Group & Family parameters & Degree $p$ and module $U$ & Construction\\
\midrule
$S_n$ & $n\ge2$, $r=n-1$ & $\gcd(p,n)=1$,\newline $U\cong V^*$ & \cref{c:ex:haiman}\\
\addlinespace
$G(m,d,r)$ & $m\ge2$, $d\mid m$, $r\ge1$;\newline $d=1$ if $r=1$;\newline $(m,d,r)\ne(2,2,2)$ & $\gcd(p,m)=1$,\newline $U\cong(V^*)^{\sigma_p}$ & \cref{c:sec:imprimitive}\\
\bottomrule
\end{tabular}
\caption{Posh pairs in the infinite families, for \cref{c:thm:main}.} 
\label{tab:infinite-algebraic}
\end{table}

\begin{table}[htbp]
\centering
\begin{tabular}{@{}L{.17\textwidth}L{.35\textwidth}L{.20\textwidth}L{.20\textwidth}@{}}
\toprule
Group & Family parameters & Degree $p$ & Construction\\
\midrule
$S_n$ & $n\ge2$, $r=n-1$ & $\gcd(p,n)=1$ & \cref{c:ex:haiman}\\
\addlinespace
$G(m,d,r)$ & $m\ge2$, $d\mid m$, $r\ge1$;\newline $d=1$ if $r=1$;\newline $(d,r)\ne(m,2)$ & $p\equiv1\pmod m$ & \cref{c:sec:imprimitive}\\
\addlinespace
$G(m,m,2)$ & $m\ge3$, $r=2$ & $p\equiv\pm1\pmod m$ & \cref{c:sec:imprimitive}\\
\bottomrule
\end{tabular}
\caption{Untwisted posh pairs $(p,V^*)$ in the infinite
families, for \cref{c:thm:untwisted}. 
The degree conditions agree with \cite[Table 1]{IO}.}
\label{tab:infinite-untwisted}
\end{table}

\clearpage
\begingroup\small
\begingroup\setlength{\tabcolsep}{3pt}
\begin{longtable}{@{}lL{.24\textwidth}rrrrL{.26\textwidth}L{.14\textwidth}@{}}
\toprule $W$ & Degrees & $h$ & $\mathfrak f_W$ & $k$ & Period $M$ & $p\bmod M$ & Construction\\\midrule
\endfirsthead
\toprule $W$ & Degrees & $h$ & $\mathfrak f_W$ & $k$ & Period $M$ & $p\bmod M$ & Construction\\\midrule
\endhead
\bottomrule\endfoot
\endlastfoot
$G_{4}$ & $4,\allowbreak 6$ & 6 & 3 & 1 & 6 & $\{1,\allowbreak 3\}$ & \cref{c:sec:noncoprime} \\*
 &  &  &  & 2 &  & $\{3,\allowbreak 5\}$ &  \\
\arrayrulecolor{black!25}\midrule[.25pt]\arrayrulecolor{black}
$G_{5}$ & $6,\allowbreak 12$ & 12 & 3 & 1 & 6 & $\{1\}$ & \cref{c:thm:GG} \\*
 &  &  &  & 2 &  & $\{5\}$ &  \\
\arrayrulecolor{black!25}\midrule[.25pt]\arrayrulecolor{black}
$G_{6}$ & $4,\allowbreak 12$ & 12 & 12 & 1 & 12 & $\{1,\allowbreak 9\}$ & \cref{c:sec:noncoprime} \\*
 &  &  &  & 5 &  & $\{5,\allowbreak 9\}$ &  \\*
 &  &  &  & 7 &  & $\{3,\allowbreak 7\}$ &  \\*
 &  &  &  & 11 &  & $\{3,\allowbreak 11\}$ &  \\
\arrayrulecolor{black!25}\midrule[.25pt]\arrayrulecolor{black}
$G_{7}$ & $12,\allowbreak 12$ & -- & 12 & 1 & 12 & $\{1\}$ & \cref{c:lem:scalar} \\*
 &  &  &  & 5 &  & $\{5\}$ &  \\*
 &  &  &  & 7 &  & $\{7\}$ &  \\*
 &  &  &  & 11 &  & $\{11\}$ &  \\
\arrayrulecolor{black!25}\midrule[.25pt]\arrayrulecolor{black}
$G_{8}$ & $8,\allowbreak 12$ & 12 & 4 & 1 & 12 & $\{1,\allowbreak 5\}$ & \cref{c:thm:GG} \\*
 &  &  &  & 3 &  & $\{7,\allowbreak 11\}$ &  \\
\arrayrulecolor{black!25}\midrule[.25pt]\arrayrulecolor{black}
$G_{9}$ & $8,\allowbreak 24$ & 24 & 8 & 1 & 24 & $\{1,\allowbreak 17\}$ & \cref{c:thm:GG} \\*
 &  &  &  & 3 &  & $\{11,\allowbreak 19\}$ &  \\*
 &  &  &  & 5 &  & $\{5,\allowbreak 13\}$ &  \\*
 &  &  &  & 7 &  & $\{7,\allowbreak 23\}$ &  \\
\arrayrulecolor{black!25}\midrule[.25pt]\arrayrulecolor{black}
$G_{10}$ & $12,\allowbreak 24$ & 24 & 12 & 1 & 12 & $\{1\}$ & \cref{c:thm:GG} \\*
 &  &  &  & 5 &  & $\{5\}$ &  \\*
 &  &  &  & 7 &  & $\{7\}$ &  \\*
 &  &  &  & 11 &  & $\{11\}$ &  \\
\arrayrulecolor{black!25}\midrule[.25pt]\arrayrulecolor{black}
$G_{11}$ & $24,\allowbreak 24$ & -- & 24 & 1 & 24 & $\{1\}$ & \cref{c:lem:scalar} \\*
 &  &  &  & 5 &  & $\{5\}$ &  \\*
 &  &  &  & 7 &  & $\{7\}$ &  \\*
 &  &  &  & 11 &  & $\{11\}$ &  \\*
 &  &  &  & 13 &  & $\{13\}$ &  \\*
 &  &  &  & 17 &  & $\{17\}$ &  \\*
 &  &  &  & 19 &  & $\{19\}$ &  \\*
 &  &  &  & 23 &  & $\{23\}$ &  \\
\arrayrulecolor{black!25}\midrule[.25pt]\arrayrulecolor{black}
$G_{12}$ & $6,\allowbreak 8$ & -- & 8 & 1 & 24 & $\{1,\allowbreak 11,\allowbreak 17,\allowbreak 19\}$ & \cref{c:lem:scalar} \\*
 &  &  &  & 5 &  & $\{5,\allowbreak 7,\allowbreak 13,\allowbreak 23\}$ &  \\
\arrayrulecolor{black!25}\midrule[.25pt]\arrayrulecolor{black}
$G_{13}$ & $8,\allowbreak 12$ & -- & 8 & 1 & 24 & $\{1,\allowbreak 17\}$ & \cref{c:lem:scalar} \\*
 &  &  &  & 3 &  & $\{11,\allowbreak 19\}$ &  \\*
 &  &  &  & 5 &  & $\{5,\allowbreak 13\}$ &  \\*
 &  &  &  & 7 &  & $\{7,\allowbreak 23\}$ &  \\
\arrayrulecolor{black!25}\midrule[.25pt]\arrayrulecolor{black}
$G_{14}$ & $6,\allowbreak 24$ & 24 & 24 & 1 & 24 & $\{1,\allowbreak 19\}$ & \cref{c:thm:GG} \\*
 &  &  &  & 5 &  & $\{5,\allowbreak 23\}$ &  \\*
 &  &  &  & 7 &  & $\{7,\allowbreak 13\}$ &  \\*
 &  &  &  & 11 &  & $\{11,\allowbreak 17\}$ &  \\
\arrayrulecolor{black!25}\midrule[.25pt]\arrayrulecolor{black}
$G_{15}$ & $12,\allowbreak 24$ & -- & 24 & 1 & 24 & $\{1\}$ & \cref{c:lem:scalar} \\*
 &  &  &  & 5 &  & $\{5\}$ &  \\*
 &  &  &  & 7 &  & $\{7\}$ &  \\*
 &  &  &  & 11 &  & $\{11\}$ &  \\*
 &  &  &  & 13 &  & $\{13\}$ &  \\*
 &  &  &  & 17 &  & $\{17\}$ &  \\*
 &  &  &  & 19 &  & $\{19\}$ &  \\*
 &  &  &  & 23 &  & $\{23\}$ &  \\
\arrayrulecolor{black!25}\midrule[.25pt]\arrayrulecolor{black}
$G_{16}$ & $20,\allowbreak 30$ & 30 & 5 & 1 & 30 & $\{1,\allowbreak 11\}$ & \cref{c:thm:GG} \\*
 &  &  &  & 2 &  & $\{7,\allowbreak 17\}$ &  \\*
 &  &  &  & 3 &  & $\{13,\allowbreak 23\}$ &  \\*
 &  &  &  & 4 &  & $\{19,\allowbreak 29\}$ &  \\
\arrayrulecolor{black!25}\midrule[.25pt]\arrayrulecolor{black}
$G_{17}$ & $20,\allowbreak 60$ & 60 & 20 & 1 & 60 & $\{1,\allowbreak 41\}$ & \cref{c:thm:GG} \\*
 &  &  &  & 3 &  & $\{23,\allowbreak 43\}$ &  \\*
 &  &  &  & 7 &  & $\{7,\allowbreak 47\}$ &  \\*
 &  &  &  & 9 &  & $\{29,\allowbreak 49\}$ &  \\*
 &  &  &  & 11 &  & $\{11,\allowbreak 31\}$ &  \\*
 &  &  &  & 13 &  & $\{13,\allowbreak 53\}$ &  \\*
 &  &  &  & 17 &  & $\{17,\allowbreak 37\}$ &  \\*
 &  &  &  & 19 &  & $\{19,\allowbreak 59\}$ &  \\
\arrayrulecolor{black!25}\midrule[.25pt]\arrayrulecolor{black}
$G_{18}$ & $30,\allowbreak 60$ & 60 & 15 & 1 & 30 & $\{1\}$ & \cref{c:thm:GG} \\*
 &  &  &  & 2 &  & $\{17\}$ &  \\*
 &  &  &  & 4 &  & $\{19\}$ &  \\*
 &  &  &  & 7 &  & $\{7\}$ &  \\*
 &  &  &  & 8 &  & $\{23\}$ &  \\*
 &  &  &  & 11 &  & $\{11\}$ &  \\*
 &  &  &  & 13 &  & $\{13\}$ &  \\*
 &  &  &  & 14 &  & $\{29\}$ &  \\
\arrayrulecolor{black!25}\midrule[.25pt]\arrayrulecolor{black}
$G_{19}$ & $60,\allowbreak 60$ & -- & 60 & 1 & 60 & $\{1\}$ & \cref{c:lem:scalar} \\*
 &  &  &  & 7 &  & $\{7\}$ &  \\*
 &  &  &  & 11 &  & $\{11\}$ &  \\*
 &  &  &  & 13 &  & $\{13\}$ &  \\*
 &  &  &  & 17 &  & $\{17\}$ &  \\*
 &  &  &  & 19 &  & $\{19\}$ &  \\*
 &  &  &  & 23 &  & $\{23\}$ &  \\*
 &  &  &  & 29 &  & $\{29\}$ &  \\*
 &  &  &  & 31 &  & $\{31\}$ &  \\*
 &  &  &  & 37 &  & $\{37\}$ &  \\*
 &  &  &  & 41 &  & $\{41\}$ &  \\*
 &  &  &  & 43 &  & $\{43\}$ &  \\*
 &  &  &  & 47 &  & $\{47\}$ &  \\*
 &  &  &  & 49 &  & $\{49\}$ &  \\*
 &  &  &  & 53 &  & $\{53\}$ &  \\*
 &  &  &  & 59 &  & $\{59\}$ &  \\
\arrayrulecolor{black!25}\midrule[.25pt]\arrayrulecolor{black}
$G_{20}$ & $12,\allowbreak 30$ & 30 & 15 & 1 & 30 & $\{1,\allowbreak 19\}$ & \cref{c:thm:GG} \\*
 &  &  &  & 2 &  & $\{17,\allowbreak 23\}$ &  \\*
 &  &  &  & 7 &  & $\{7,\allowbreak 13\}$ &  \\*
 &  &  &  & 11 &  & $\{11,\allowbreak 29\}$ &  \\
\arrayrulecolor{black!25}\midrule[.25pt]\arrayrulecolor{black}
$G_{21}$ & $12,\allowbreak 60$ & 60 & 60 & 1 & 60 & $\{1,\allowbreak 49\}$ & \cref{c:thm:GG} \\*
 &  &  &  & 7 &  & $\{7,\allowbreak 43\}$ &  \\*
 &  &  &  & 11 &  & $\{11,\allowbreak 59\}$ &  \\*
 &  &  &  & 13 &  & $\{13,\allowbreak 37\}$ &  \\*
 &  &  &  & 17 &  & $\{17,\allowbreak 53\}$ &  \\*
 &  &  &  & 19 &  & $\{19,\allowbreak 31\}$ &  \\*
 &  &  &  & 23 &  & $\{23,\allowbreak 47\}$ &  \\*
 &  &  &  & 29 &  & $\{29,\allowbreak 41\}$ &  \\
\arrayrulecolor{black!25}\midrule[.25pt]\arrayrulecolor{black}
$G_{22}$ & $12,\allowbreak 20$ & -- & 20 & 1 & 60 & $\{1,\allowbreak 29,\allowbreak 41,\allowbreak 49\}$ & \cref{c:lem:scalar} \\*
 &  &  &  & 3 &  & $\{7,\allowbreak 23,\allowbreak 43,\allowbreak 47\}$ &  \\*
 &  &  &  & 11 &  & $\{11,\allowbreak 19,\allowbreak 31,\allowbreak 59\}$ &  \\*
 &  &  &  & 13 &  & $\{13,\allowbreak 17,\allowbreak 37,\allowbreak 53\}$ &  \\
\arrayrulecolor{black!25}\midrule[.25pt]\arrayrulecolor{black}
$G_{23}$ & $2,\allowbreak 6,\allowbreak 10$ & 10 & 5 & 1 & 10 & $\{1,\allowbreak 5,\allowbreak 9\}$ & \cref{c:sec:noncoprime} \\*
 &  &  &  & 2 &  & $\{3,\allowbreak 5,\allowbreak 7\}$ &  \\
\arrayrulecolor{black!25}\midrule[.25pt]\arrayrulecolor{black}
$G_{24}$ & $4,\allowbreak 6,\allowbreak 14$ & 14 & 7 & 1 & 14 & $\{1,\allowbreak 9,\allowbreak 11\}$ & \cref{c:thm:GG} \\*
 &  &  &  & 3 &  & $\{3,\allowbreak 5,\allowbreak 13\}$ &  \\
\arrayrulecolor{black!25}\midrule[.25pt]\arrayrulecolor{black}
$G_{25}$ & $6,\allowbreak 9,\allowbreak 12$ & 12 & 3 & 1 & 6 & $\{1\}$ & \cref{c:thm:GG} \\*
 &  &  &  & 2 &  & $\{5\}$ &  \\
\arrayrulecolor{black!25}\midrule[.25pt]\arrayrulecolor{black}
$G_{26}$ & $6,\allowbreak 12,\allowbreak 18$ & 18 & 3 & 1 & 6 & $\{1\}$ & \cref{c:thm:GG} \\*
 &  &  &  & 2 &  & $\{5\}$ &  \\
\arrayrulecolor{black!25}\midrule[.25pt]\arrayrulecolor{black}
$G_{27}$ & $6,\allowbreak 12,\allowbreak 30$ & 30 & 15 & 1 & 30 & $\{1,\allowbreak 19,\allowbreak 25\}$ & \cref{c:sec:noncoprime} \\*
 &  &  &  & 2 &  & $\{5,\allowbreak 17,\allowbreak 23\}$ &  \\*
 &  &  &  & 7 &  & $\{7,\allowbreak 13,\allowbreak 25\}$ &  \\*
 &  &  &  & 11 &  & $\{5,\allowbreak 11,\allowbreak 29\}$ &  \\
\arrayrulecolor{black!25}\midrule[.25pt]\arrayrulecolor{black}
$G_{28}$ & $2,\allowbreak 6,\allowbreak 8,\allowbreak 12$ & 12 & 1 & 1 & 6 & $\{1,\allowbreak 5\}$ & \cref{c:thm:GG} \\
\arrayrulecolor{black!25}\midrule[.25pt]\arrayrulecolor{black}
$G_{29}$ & $4,\allowbreak 8,\allowbreak 12,\allowbreak 20$ & 20 & 4 & 1 & 20 & $\{1,\allowbreak 9,\allowbreak 13,\allowbreak 17\}$ & \cref{c:thm:GG} \\*
 &  &  &  & 3 &  & $\{3,\allowbreak 7,\allowbreak 11,\allowbreak 19\}$ &  \\
\arrayrulecolor{black!25}\midrule[.25pt]\arrayrulecolor{black}
$G_{30}$ & $2,\allowbreak 12,\allowbreak 20,\allowbreak 30$ & 30 & 5 & 1 & 30 & $\{1,\allowbreak 11,\allowbreak 19,\allowbreak 29\}$ & \cref{c:thm:GG} \\*
 &  &  &  & 2 &  & $\{7,\allowbreak 13,\allowbreak 17,\allowbreak 23\}$ &  \\
\arrayrulecolor{black!25}\midrule[.25pt]\arrayrulecolor{black}
$G_{31}$ & $8,\allowbreak 12,\allowbreak 20,\allowbreak 24$ & -- & 4 & 1 & 60 & $\{1,\allowbreak 13,\allowbreak 17,\allowbreak 29,\allowbreak 37,\allowbreak 41,\allowbreak 49,\allowbreak 53\}$ & \cref{c:lem:eigenspace} \\*
 &  &  &  & 3 &  & $\{7,\allowbreak 11,\allowbreak 19,\allowbreak 23,\allowbreak 31,\allowbreak 43,\allowbreak 47,\allowbreak 59\}$ &  \\
\arrayrulecolor{black!25}\midrule[.25pt]\arrayrulecolor{black}
$G_{32}$ & $12,\allowbreak 18,\allowbreak 24,\allowbreak 30$ & 30 & 3 & 1 & 30 & $\{1,\allowbreak 7,\allowbreak 13,\allowbreak 19\}$ & \cref{c:thm:GG} \\*
 &  &  &  & 2 &  & $\{11,\allowbreak 17,\allowbreak 23,\allowbreak 29\}$ &  \\
\arrayrulecolor{black!25}\midrule[.25pt]\arrayrulecolor{black}
$G_{33}$ & $4,\allowbreak 6,\allowbreak 10,\allowbreak 12,\allowbreak 18$ & 18 & 3 & 1 & 6 & $\{1\}$ & \cref{c:thm:GG} \\*
 &  &  &  & 2 &  & $\{5\}$ &  \\
\arrayrulecolor{black!25}\midrule[.25pt]\arrayrulecolor{black}
$G_{34}$ & $6,\allowbreak 12,\allowbreak 18,\allowbreak 24,\allowbreak 30,\allowbreak 42$ & 42 & 3 & 1 & 42 & $\{1,\allowbreak 13,\allowbreak 19,\allowbreak 25,\allowbreak 31,\allowbreak 37\}$ & \cref{c:thm:GG} \\*
 &  &  &  & 2 &  & $\{5,\allowbreak 11,\allowbreak 17,\allowbreak 23,\allowbreak 29,\allowbreak 41\}$ &  \\
\arrayrulecolor{black!25}\midrule[.25pt]\arrayrulecolor{black}
$G_{35}$ & $2,\allowbreak 5,\allowbreak 6,\allowbreak 8,\allowbreak 9,\allowbreak 12$ & 12 & 1 & 1 & 6 & $\{1,\allowbreak 5\}$ & \cref{c:thm:GG} \\
\arrayrulecolor{black!25}\midrule[.25pt]\arrayrulecolor{black}
$G_{36}$ & $2,\allowbreak 6,\allowbreak 8,\allowbreak 10,\allowbreak 12,\allowbreak 14,\allowbreak 18$ & 18 & 1 & 1 & 6 & $\{1,\allowbreak 5\}$ & \cref{c:thm:GG} \\
\arrayrulecolor{black!25}\midrule[.25pt]\arrayrulecolor{black}
$G_{37}$ & $2,\allowbreak 8,\allowbreak 12,\allowbreak 14,\allowbreak 18,\allowbreak 20,\allowbreak 24,\allowbreak 30$ & 30 & 1 & 1 & 30 & $\{1,\allowbreak 7,\allowbreak 11,\allowbreak 13,\allowbreak 17,\allowbreak 19,\allowbreak 23,\allowbreak 29\}$ & \cref{c:thm:GG} \\
\bottomrule
\caption{Exceptional posh pairs in \cref{c:thm:main}, with $U=(V^*)^{\sigma_k}$. The Coxeter number $h=d_r$ is displayed precisely for the well-generated groups. Only $k=1$ gives a permutation module.}\label{c:tab:exceptional}\\
\end{longtable}
\endgroup
\endgroup
\begingroup\small
\begin{table}[htbp]
\centering\small
\setlength{\tabcolsep}{4pt}
\begin{tabular}{@{}lL{.31\textwidth}rrL{.25\textwidth}@{}}
\toprule $W$ & Degrees & $h$ & Period $M$ & $p\bmod M$\\\midrule
$G_{4}$ & $4,\allowbreak 6$ & 6 & 6 & $\{1,\allowbreak 3\}$ \\
$G_{5}$ & $6,\allowbreak 12$ & 12 & 6 & $\{1\}$ \\
$G_{6}$ & $4,\allowbreak 12$ & 12 & 12 & $\{1,\allowbreak 9\}$ \\
$G_{7}$ & $12,\allowbreak 12$ & -- & 12 & $\{1\}$ \\
$G_{8}$ & $8,\allowbreak 12$ & 12 & 12 & $\{1,\allowbreak 5\}$ \\
$G_{9}$ & $8,\allowbreak 24$ & 24 & 24 & $\{1,\allowbreak 17\}$ \\
$G_{10}$ & $12,\allowbreak 24$ & 24 & 12 & $\{1\}$ \\
$G_{11}$ & $24,\allowbreak 24$ & -- & 24 & $\{1\}$ \\
$G_{12}$ & $6,\allowbreak 8$ & -- & 24 & $\{1,\allowbreak 11,\allowbreak 17,\allowbreak 19\}$ \\
$G_{13}$ & $8,\allowbreak 12$ & -- & 24 & $\{1,\allowbreak 17\}$ \\
$G_{14}$ & $6,\allowbreak 24$ & 24 & 24 & $\{1,\allowbreak 19\}$ \\
$G_{15}$ & $12,\allowbreak 24$ & -- & 24 & $\{1\}$ \\
$G_{16}$ & $20,\allowbreak 30$ & 30 & 30 & $\{1,\allowbreak 11\}$ \\
$G_{17}$ & $20,\allowbreak 60$ & 60 & 60 & $\{1,\allowbreak 41\}$ \\
$G_{18}$ & $30,\allowbreak 60$ & 60 & 30 & $\{1\}$ \\
$G_{19}$ & $60,\allowbreak 60$ & -- & 60 & $\{1\}$ \\
$G_{20}$ & $12,\allowbreak 30$ & 30 & 30 & $\{1,\allowbreak 19\}$ \\
$G_{21}$ & $12,\allowbreak 60$ & 60 & 60 & $\{1,\allowbreak 49\}$ \\
$G_{22}$ & $12,\allowbreak 20$ & -- & 60 & $\{1,\allowbreak 29,\allowbreak 41,\allowbreak 49\}$ \\
$H_3=G_{23}$ & $2,\allowbreak 6,\allowbreak 10$ & 10 & 10 & $\{1,\allowbreak 5,\allowbreak 9\}$ \\
$G_{24}$ & $4,\allowbreak 6,\allowbreak 14$ & 14 & 14 & $\{1,\allowbreak 9,\allowbreak 11\}$ \\
$G_{25}$ & $6,\allowbreak 9,\allowbreak 12$ & 12 & 6 & $\{1\}$ \\
$G_{26}$ & $6,\allowbreak 12,\allowbreak 18$ & 18 & 6 & $\{1\}$ \\
$G_{27}$ & $6,\allowbreak 12,\allowbreak 30$ & 30 & 30 & $\{1,\allowbreak 19,\allowbreak 25\}$ \\
$F_4=G_{28}$ & $2,\allowbreak 6,\allowbreak 8,\allowbreak 12$ & 12 & 6 & $\{1,\allowbreak 5\}$ \\
$G_{29}$ & $4,\allowbreak 8,\allowbreak 12,\allowbreak 20$ & 20 & 20 & $\{1,\allowbreak 9,\allowbreak 13,\allowbreak 17\}$ \\
$H_4=G_{30}$ & $2,\allowbreak 12,\allowbreak 20,\allowbreak 30$ & 30 & 30 & $\{1,\allowbreak 11,\allowbreak 19,\allowbreak 29\}$ \\
$G_{31}$ & $8,\allowbreak 12,\allowbreak 20,\allowbreak 24$ & -- & 60 & $\{1,\allowbreak 13,\allowbreak 17,\allowbreak 29,\allowbreak 37,\allowbreak 41,\allowbreak 49,\allowbreak 53\}$ \\
$G_{32}$ & $12,\allowbreak 18,\allowbreak 24,\allowbreak 30$ & 30 & 30 & $\{1,\allowbreak 7,\allowbreak 13,\allowbreak 19\}$ \\
$G_{33}$ & $4,\allowbreak 6,\allowbreak 10,\allowbreak 12,\allowbreak 18$ & 18 & 6 & $\{1\}$ \\
$G_{34}$ & $6,\allowbreak 12,\allowbreak 18,\allowbreak 24,\allowbreak 30,\allowbreak 42$ & 42 & 42 & $\{1,\allowbreak 13,\allowbreak 19,\allowbreak 25,\allowbreak 31,\allowbreak 37\}$ \\
$E_6=G_{35}$ & $2,\allowbreak 5,\allowbreak 6,\allowbreak 8,\allowbreak 9,\allowbreak 12$ & 12 & 6 & $\{1,\allowbreak 5\}$ \\
$E_7=G_{36}$ & $2,\allowbreak 6,\allowbreak 8,\allowbreak 10,\allowbreak 12,\allowbreak 14,\allowbreak 18$ & 18 & 6 & $\{1,\allowbreak 5\}$ \\
$E_8=G_{37}$ & $2,\allowbreak 8,\allowbreak 12,\allowbreak 14,\allowbreak 18,\allowbreak 20,\allowbreak 24,\allowbreak 30$ & 30 & 30 & $\{1,\allowbreak 7,\allowbreak 11,\allowbreak 13,\allowbreak 17,\allowbreak 19,\allowbreak 23,\allowbreak 29\}$ \\
\bottomrule
\end{tabular}
\caption{Untwisted exceptional posh pairs $(p,V^*)$ in \cref{c:thm:untwisted}. These are the $k=1$ rows of \cref{c:tab:exceptional}; the Coxeter number $h=d_r$ is displayed precisely for the well-generated groups. The degree conditions agree with~\cite[Table 1]{IO}.}\label{tab:exceptional-untwisted}
\end{table}
\endgroup

\clearpage
\begingroup\raggedbottom

\endgroup
\end{document}